 \newcommand{\Alt}{\mathfrak{A}} 
  \newcommand{\isom}{\xrightarrow{\sim}}
\DeclareMathOperator{\symd}{Symd}
\DeclareMathOperator{\charac}{char}
\DeclareMathOperator{\Spec}{Spec}
  \DeclareMathOperator{\SB}{\boldsymbol{\mathcal{SB}}}
 \DeclareMathAlphabet{\cat}{OT1}{cmss}{m}{sl}
 \DeclareMathOperator{\tens}{\otimes}
 \DeclareMathOperator{\CSA}{\cat{CSA}}
 \DeclareMathOperator{\Orth}{O}
 \DeclareMathOperator{\Gal}{Gal} \newcommand{\Set}{\cat{Set}}
\newcommand{\D}{\cat{D}}
  \newcommand{\DCov}{\cat{QCov}}
 \newcommand{\Proj}{\cat{Proj}}
 \newcommand{\Quad}{\cat{Quad}}
  \newcommand{\Et}{\cat{\acute Et}}
 \newcommand{\QCov}{\cat{QCov}} 
 \newcommand{\QEtex}{\cat{Q\acute Etex}} \newcommand{\OQCov}{\cat{OQCov}} 
\newcommand{\OQEtex}{\cat{OQ\acute Etex}}
\newcommand{\QCSA}{\cat{QCSA}}
\newcommand{\OQuad}{\cat{OQuad}}
\newcommand{\OQCSA}{\cat{OQCSA}}
  \newcommand{\Sym}{\mathfrak{S}}
 \DeclareMathOperator{\Iso}{Iso}
 \DeclareMathOperator{\funcX}{\mathbf{X}}
  \DeclareMathOperator{\funcY}{\mathbf{Y}}
 \DeclareMathOperator{\id}{Id}
 \DeclareMathOperator{\Hom}{Hom} \DeclareMathOperator{\End}{End}
  \DeclareMathOperator{\ind}{Ind}
  \DeclareMathOperator{\Skew}{Skew}
 \DeclareMathOperator{\SSym}{Sym} \DeclareMathOperator{\SSymd}{Symd}
 \DeclareMathOperator{\AAlt}{Alt}
 \DeclareMathOperator{\Trd}{Trd} \DeclareMathOperator{\Tr}{Tr}
  \DeclareMathOperator{\PGO}{PGO}
 \DeclareMathOperator{\PGL}{PGL}
 \DeclareMathOperator{\sym}{Sym}
 \newcommand{\n}{\boldsymbol{n}}
 \DeclareMathOperator{\funcM}{\mathbf{M}}
\DeclareMathOperator{\funcQ}{\mathbf{Q}}
 \DeclareMathOperator{\DSB}{\mathbb{SB}}
\newcommand{\joinrelshort}{\mathrel{\mkern-9mu}}
\newcommand{\shortlongrightarrow}{\relbar\joinrelshort\rightarrow}
\newcommand{\iso}{\mathrel{\mathop{\setbox0\hbox{$\mathsurround0pt
        \shortlongrightarrow$}\ht0=0.7\ht0\box0}\limits
    ^{\sim\mkern2mu}}}
     \newcommand{\ljoinrelshort}{\mathrel{\mkern-16mu}}
\newcommand{\lshortlongrightarrow}{\relbar\ljoinrelshort\leftarrow}
\newcommand{\liso}{\mathrel{\mathop{\setbox0\hbox{$\mathsurround0pt
        \lshortlongrightarrow$}\ht0=0.7\ht0\box0}\limits
    ^{\sim\mkern2mu}}}
\newcommand{\hcenter}[1]{{\leavevmode\setbox0\hbox{#1}\kern-.5\wd0\box0}}
\DeclareMathAlphabet{\cat}{OT1}{cmss}{m}{sly}
\theoremstyle{plain} %
\numberwithin{equation}{section}
\newtheorem{thm}[equation]{Theorem}
\newtheorem{principle}[equation]{Principle}
\newtheorem{prop}[equation]{Proposition}
\newtheorem{cor}[equation]{Corollary}
\newtheorem{lem}[equation]{Lemma}
\theoremstyle{definition} %
\numberwithin{equation}{section}
\newtheorem{example}[equation]{Example}
\newtheorem{remark}[equation]{Remark}
 \newtheorem{rem}{Remark}
\newcounter{eqalignnumcnt}
\newcommand{\OLDref}{} \let\OLDref\ref
\renewcommand{\ref}[1]{\textup{\OLDref{#1}}}
\newcommand{\nop}[3]{}
\dedicatory{\`A Jacques Tits, pour son $80^{\textit{\`eme}}$ anniversaire}
\title{Thin Severi--Brauer varieties} \author{Max-Albert Knus \and
  Jean-Pierre Tignol}
\address{Department Mathematik\\
  ETH Zentrum\\
  CH-8092 Z\"urich\\
  Switzerland} \email{knus@math.ethz.ch}
\address{Institut de Math\'ematique Pure et Appliqu\'ee\\
  Universit\'e catholique de Louvain\\
  B-1348 Louvain-la-Neuve\\
  Belgium} \email{jean-pierre.tignol@uclouvain.be} \thanks{The second
  author is supported in part by the F.R.S.--FNRS (Belgium)}
\begin{document}

\begin{abstract}
Severi--Brauer varieties are twisted forms of projective spaces
(in the sense of Galois cohomology) and are  associated in a functorial way
to central simple algebras. Similarly quadrics are related to algebras
with involution. Since thin projective spaces are finite sets, thin
Severi--Brauer varieties are finite sets endowed with a Galois action;
they are associated to \'etale algebras. Similarly, thin quadrics are
\'etale algebras with involution. We discuss embeddings of thin
Severi--Brauer varieties and thin quadrics in Severi--Brauer varieties
and quadrics as geometric analogues of embeddings of \'etale algebras
into central simple algebras (with or without involution), and
consider the geometric counterpart of the Clifford algebra construction.
\end{abstract}
\maketitle
\section{Introduction}

Paraphrasing Tits \cite{Tits74}, ``thin'' geometric objects are
characterised by the  
fact that their automorphism groups are the Weyl groups of the groups 
of automorphisms of the corresponding 
classical geometric objects.\footnote{In \cite{Tits57} Tits says that
such objects are defined over the ``field 
of characteristic 1''.}
Thus a thin $(n-1)$-dimensional projective 
space  is a finite set
of $n$ elements and a $2(n-1)$-dimensional thin quadric is a double
covering of a set of $n$ elements. 

Our starting point is to consider twisted forms
(in the sense of Galois cohomology) of projective spaces and quadrics.
Fixing a base field, a twist is given 
by a continuous action of the absolute Galois group of the
field. Twisted forms of projective spaces over a field $F$ occur as
Severi--Brauer varieties associated to 
central simple algebras over $F$, and twisted quadrics correspond to
central simple algebras with quadratic pair. Similarly, twisted forms
of finite sets are associated 
to algebraic objects, namely \'etale algebras, by a correspondence known as
Grothendieck's version of Galois theory. Double coverings correspond to
\'etale algebras with involution. 
Moreover the Grassmannian of maximal linear subspaces
of a split quadric of dimension $2(n-1)$ corresponds as a geometric
object to the Clifford algebra of a central algebra with quadratic
pair as algebraic object.  
We call this Grassmannian the Clifford set of the quadric.

The first aim of the paper is to study equivariant embeddings of
finite twisted objects into corresponding 
classical objects. For example a general embedding of a
finite set of $n$ elements
in a projective space of dimension $n-1$ corresponds to the embedding
of an \'etale algebra of rank $n$ in a central simple algebra
of degree $n$ (or to the embedding of a torus of rank~$n-1$ into a
twisted form of $\operatorname{SL_n}$, see
\cite[S2]{PrasadRapinchuk}). The second aim is to discuss in detail
the geometric side of the Clifford construction, and to relate it to
Clifford algebras for \'etale algebras with involution and for central
simple algebras with quadratic pairs.

In Section~\ref{sec:SB} we consider Severi--Brauer varieties, 
in Section~\ref{sec:involutions} quadrics, and in
Section~\ref{sec:Clifford}  the various types of Clifford
constructions, where
we introduce a special type of Severi--Brauer variety to describe
the geometry of a Clifford algebra.
Finally we sketch in Section~\ref{sec:Dynkin} a discussion of two
exceptional phenomena. First we describe consequences of
the fact that the Dynkin diagrams $A_3$ and $D_3$ are equal. This is 
related on the geometric side with the Klein quadric and on 
the algebraic side with the resolution of the equation of degree $4$.
Then we look at triality, that is, outer automorphisms of order 3 of $D_4$. 

One  origin of this paper is the observation of Tits~\cite[p. 287]{Tits57} 
that the two phenomena described above also occur over the 
``field of characteristic $1$''. Also, his short geometric proof of
the fact that two quaternion algebras whose tensor product is not
division share a common quadratic subfield \cite{Tits93} is a
beautiful illustration of the power of geometric insights into
algebraic questions. Another source of inspiration was~\cite{saltman},
where Saltman discusses the embedding of \'etale algebras with
involution into central simple algebras with involution.
Pairs consisting of a maximal \'etale subalgebra of a central simple algebra
(and corresponding pairs of algebras with involutions) are also considered in
\cite{chuard03} and \cite{PrasadRapinchuk}. Triality  in relation
with \'etale algebras is extensively discussed in \cite{KT09}.
We plan to come back to other aspects, in particular to unitary and
symplectic analogues, in \cite{KT10}.

\section{Severi--Brauer projective spaces}
\label{sec:SB}

\subsection{$\Gamma$-projective spaces and central simple algebras}
Throughout most of this work, $F$ is an arbitrary field. We denote by
$F_s$ a separable closure of $F$ and by $\Gamma$ the absolute Galois
group $\Gal(F_s/F)$, which is a profinite group.

A \emph{$\Gamma$-projective space $P$} of dimension $n-1$ over a field $F$ 
is a projective
space $\mathbb P^{n-1}(F_s)$ of dimension $n-1$ 
endowed with the discrete topology and a continuous action
of $\Gamma$ by collineations, denoted by $x \mapsto {}^\gamma x,\ x \in P,\
 \gamma \in \Gamma$. 
For every integer $k\in\{ 1,\ldots, n\}$,  the \emph{Grassmannian
$Gr_{k-1}(P)$} of $(k-1)$-dimensional linear varieties in $P$ carries an
action of $\Gamma$. 
We view the set:
 \[
\overline P =\{ x \in P \  | \  {}^\gamma x =x \ \text{for all $\gamma
  \in \Gamma$}\} \subset P 
\]
as a scheme over $F$.
The embedding $\overline  P \subset P$ induces an isomorphism
\[
 \beta \colon \overline  P\times_{\Spec F}  \Spec F_s \iso P
\]
of $\Gamma$-spaces. From now on we shall not distinguish between $P$ and the pair
$(\overline P,\beta)$. We have a similar construction for Grassmannians.

Continuous actions of $\Gamma$ are related with Galois cohomology, as we now
illustrate in the case of $\Gamma$-projective spaces.
There is a standard action of $\Gamma$ on  $\mathbb P^{n-1}(F_s)$ induced by 
the action of $\Gamma$ on $F_s$. We denote it by $x \mapsto {}^{\bar
  \gamma} x$.  
This standard action induces in turn a $\Gamma$-action on the group
$\PGL_n(F_s)$  of 
automorphisms of  the projective space $\mathbb P^{n-1}(F_s)$,
given by
$\varphi \mapsto \bar
\gamma( \varphi)= \bar\gamma \varphi {\bar\gamma}^{-1}, \ \varphi \in  \PGL_n(F_s) $.

A \emph{$1$-cocycle of $\Gamma$ with values in  $\PGL_n$} is a continuous map
$\Gamma \to \PGL_n(F_s)$, $\gamma \mapsto a_\gamma$,
 such that 
\[ a_{\gamma\delta} = a_\gamma\bar\gamma (a_\delta)  \quad \text{for all
$\gamma,\delta \in \Gamma$}.
\]
Two cocycles $a_\gamma,a'_\gamma$ are \emph{cohomologous or equivalent} if there exists 
$a \in \PGL_n(F_s)$ such that $a'_\gamma = a a_\gamma  {\bar\gamma}(a)^{-1})$.
We denote by $H^1(\Gamma,  \PGL_n)$ the set of equivalence classes of
$1$-cocycles. 

Let $P= (\overline P,\beta)$ be any $\Gamma$-projective space. The map 
$
a_\gamma = \beta\gamma\beta^{-1} \bar\gamma ^{-1}, \ 
\gamma \in \Gamma$,
is an element of $\PGL_n(F_s)$ and defines a $1$-cocycle. 
Moreover one shows that isomorphic
$\Gamma$-projective spaces lead to cohomologous cocycles. Let $\Proj_\Gamma^{n-1}$
be the groupoid\footnote{A groupoid is a category in which all 
  morphisms are isomorphisms.}  of $\Gamma$-projective spaces and let $\Iso\big(\Proj_\Gamma^{n-1} \big)$
be its set of isomorphism classes.
\begin{prop}
The map $(P,\beta)  \mapsto a_\gamma$ induces a bijection 

\[
\Iso\big(\Proj_\Gamma^{n-1} \big) \iso H^1(\Gamma,  \PGL_n).
\]
\end{prop}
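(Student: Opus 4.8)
This is the standard dictionary between twisted forms and non-abelian $H^1$: the groupoid $\Proj_\Gamma^{n-1}$ is the groupoid of twisted forms of the standard $\Gamma$-projective space, whose automorphism group is $\PGL_n(F_s)$, and the claimed bijection is the classification of such forms. Concretely, write $\gamma$ for the operator $x\mapsto{}^\gamma x$ attached to a $\Gamma$-projective space $P$ and $\bar\gamma$ for the operator $x\mapsto{}^{\bar\gamma}x$ of the standard action. Since $\bar\gamma$ is a $\gamma$-semilinear collineation and, in any $\Gamma$-projective space, the action lies over the action of $\Gamma$ on $\Spec F_s$, the composite $a_\gamma=\beta\gamma\beta^{-1}\bar\gamma^{-1}$ is an $F_s$-linear collineation, i.e.\ lies in $\PGL_n(F_s)$. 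The plan is to verify four points: (i) $\gamma\mapsto a_\gamma$ is a continuous $1$-cocycle; (ii) an isomorphism of $\Gamma$-projective spaces changes $a_\gamma$ by a coboundary, so the map descends to $\Iso(\Proj_\Gamma^{n-1})\to H^1(\Gamma,\PGL_n)$; (iii) this map is injective; (iv) it is surjective.

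For (i), substituting the definition and using that $\gamma$ and $\bar\gamma$ are genuine $\Gamma$-actions, together with $\bar\gamma(a_\delta)=\bar\gamma\, a_\delta\,\bar\gamma^{-1}$, gives the identity $a_{\gamma\delta}=a_\gamma\,\bar\gamma(a_\delta)$ by direct cancellation; continuity of $\gamma\mapsto a_\gamma$ is inherited from continuity of the given action and of the standard action, i.e.\ from the openness of point stabilizers. For (ii), an isomorphism $P\isom P'$ is a collineation $f\in\PGL_n(F_s)$ intertwining the two twisted actions, $f\gamma=\gamma'f$ for all $\gamma$; writing $a'_\gamma$ for the cocycle of $P'$ one computes $a'_\gamma=\gamma'\bar\gamma^{-1}=f\gamma f^{-1}\bar\gamma^{-1}=f\,a_\gamma\,\bar\gamma(f)^{-1}$, so $a'_\gamma$ and $a_\gamma$ are cohomologous. (When one keeps $\beta$ general, the same computation absorbs the $\beta$'s, since replacing $\beta$ leaves the class unchanged for the same reason.)

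For (iii), conversely, if $a'_\gamma=b\,a_\gamma\,\bar\gamma(b)^{-1}$ with $b\in\PGL_n(F_s)$, then reversing the computation of (ii) shows $b\gamma=\gamma'b$, so $b$ is an isomorphism of $\Gamma$-projective spaces; hence distinct isomorphism classes give distinct cohomology classes. For (iv), given a continuous cocycle $\gamma\mapsto a_\gamma$, define a new operator on $\mathbb{P}^{n-1}(F_s)$ by $\gamma\ast x:=a_\gamma({}^{\bar\gamma}x)$; the cocycle identity shows $\ast$ is a $\Gamma$-action, it is by collineations because each $a_\gamma$ and each $\bar\gamma$ is a collineation, and it is continuous because $a$ and the standard action are. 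This produces a $\Gamma$-projective space whose attached cocycle is again $a_\gamma$, which gives surjectivity. The one point that is not pure bookkeeping is the assertion that the fixed-point set of $\ast$ is legitimately a scheme $\overline{P}$ over $F$ with $\overline{P}\times_F\Spec F_s\isom P$; this is the effectivity of Galois descent for $\mathbb{P}^{n-1}$, which is classical (and, in the language used later, is subsumed in the equivalence between $\Gamma$-projective spaces and central simple algebras via the functor $\SB$). I expect this descent step to be the only real obstacle; the remaining verifications are the routine manipulation of the two commuting pieces of structure — the standard semilinear action and the $\PGL_n(F_s)$-valued correction term — and reflect nothing more than $\PGL_n=\Aut(\mathbb{P}^{n-1})$ in the general forms–cocycles formalism.
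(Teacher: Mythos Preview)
Your proof is correct and follows the same approach as the paper's own argument, only in far more detail: the paper simply writes down the inverse map $x\mapsto{}^{a_\gamma\bar\gamma}x$ and refers to Serre's \emph{Corps locaux} for the remaining verifications, which are exactly your points (i)--(iv). One small remark: your flagged ``only real obstacle'' (effectivity of descent for $\mathbb{P}^{n-1}$) is not actually needed for the proposition as stated, since in the paper's setup a $\Gamma$-projective space is by definition $\mathbb{P}^{n-1}(F_s)$ with a continuous action by collineations, and the bijection with $H^1$ is purely the formal cocycle calculus you carried out; the identification with the pair $(\overline P,\beta)$ is a separate (and indeed more delicate) assertion.
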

\begin{proof}
We describe a map in the opposite direction and refer to
\cite[p. 161]{CorpsLocaux} 
for details. Given a $1$-cocycle 
$(a_\gamma)_{\gamma \in \Gamma}$, we define a $\Gamma$-action on
$P= \mathbb P^{n-1}(F_s)$ 
 as
\[
x \mapsto ^\gamma \!x = ^{a_\gamma\bar \gamma} \!x, \quad x \in P.
\qedhere
\]
\end{proof}

The  $\Gamma$-projective spaces we consider arise from the following
construction. Recall that an $F$-algebra $A$ is 
 \emph{central simple of degree $n$}
if there exists an isomorphism 
$\alpha\colon A \tens_{F} F_s \iso\End_{F_s}(V)$ for
a vector space $V$ of dimension $n$ over $F_s$. Since the group of
$F_s$-automorphisms of $\End_{F_s}(V)$ is isomorphic to  $\PGL_n(F_s)$,
a $1$-cocycle defing a $\Gamma$-projective space of dimension $n-1$ is
also a $1$-cocycle defining a central simple algebra of degree $n$.
 Thus denoting by $\CSA_F^n$
the groupoid of  central simple algebras of degree $n$, the sets
$\Iso\big(\Proj_\Gamma^{n-1} \big)$ and $\Iso\big(\CSA_F^n\big)$ are in bijection,
since they are in bijection with $H^1(\Gamma, \PGL_n)$.

There is a canonical way to associate a $\Gamma$-projective space to a 
central simple algebra. Let 
$\alpha\colon A \tens_{F} F_s \iso\End_{F_s}(V)$ as above.
Since the $n$-dimensional
right ideals of $\End_{F_s}(V)$ are of the form $\Hom_{F_s}(V,U)$ for
$U\subset V$ a $1$-dimensional subspace, any isomorphism
$A_s\simeq\End_{F_s}(V)$ defines a one-to-one correspondence between
$n$-dimensional right ideals of $A_s$ and $1$-dimensional subspaces of
$V$. Therefore, we may canonically associate to $A$ a projective space
$\SB(A)$ over $F_s$, whose $(k-1)$-dimensional linear varieties are
the $nk$-dimensional right ideals
of $A_s$. The action of the  group $\Gamma$ of $F_s$ over $F$
induces a continuous action on $A_s$, hence also on the right ideals
of $A_s$, and endows $\SB(A)$ with a $\Gamma$-projective space
structure. We call   $\SB(A)$ (or its associated scheme  $\overline{ \SB(A)}$ over $F$) 
the \emph{Severi--Brauer variety of $A$}. Its dimension is $n-1$, where
$n=\deg A$.
\medbreak

\begin{prop} \label{prop:SBequiv}
The rule $A \mapsto \SB(A)$ induces an anti-equivalence
\[
\CSA_F^n \equiv \Proj_\Gamma^{n-1}.
\]
\end{prop}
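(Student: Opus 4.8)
The plan is to upgrade the bijection on isomorphism classes (which we already have, since both $\Iso\big(\CSA_F^n\big)$ and $\Iso\big(\Proj_\Gamma^{n-1}\big)$ are identified with $H^1(\Gamma,\PGL_n)$) to a genuine anti-equivalence of groupoids, by checking that the contravariant functor $A\mapsto\SB(A)$ is essentially surjective and that it induces bijections on Hom-sets (which, in a groupoid, means on Iso-sets). Essential surjectivity is already in hand from the cocycle dictionary: given a $\Gamma$-projective space $P$, pick a cocycle $a_\gamma$ representing it; the same cocycle, read inside $\Aut_{F_s}\big(\End_{F_s}(V)\big)\cong\PGL_n(F_s)$, twists $\End_{F_s}(V)$ into a central simple algebra $A$ of degree $n$, and one checks directly from the construction of $\SB$ that $\SB(A)\cong P$ as $\Gamma$-projective spaces. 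So the content is faithfulness and fullness.

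First I would make the functor explicit on morphisms. An isomorphism $f\colon A\to A'$ of central simple algebras of degree $n$ carries $n$-dimensional right ideals to $n$-dimensional right ideals, hence, after scalar extension to $F_s$ and using the identification of such ideals with points of the projective space, induces a collineation $\SB(f)\colon\SB(A')\to\SB(A)$; since $f$ is defined over $F$ it commutes with the $\Gamma$-actions, so $\SB(f)$ is a morphism of $\Gamma$-projective spaces, and functoriality (contravariantly) is immediate. For \emph{faithfulness}: if $\SB(f)=\SB(g)$ for $f,g\colon A\to A'$, then $f$ and $g$ agree on all $n$-dimensional right ideals after base change; but over $F_s$ the algebra $A'_s\cong\End_{F_s}(V')$ is generated by its rank-one idempotents, equivalently recovered from the incidence geometry of its minimal right ideals, so $f_s=g_s$ and hence $f=g$. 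For \emph{fullness}: given a morphism $h\colon\SB(A')\to\SB(A)$ of $\Gamma$-projective spaces, base-change to $F_s$ gives a collineation between the projective spaces of $A_s$ and $A'_s$; by the fundamental theorem of projective geometry it is induced by a semilinear isomorphism $V\to V'$, and since the two $F_s$-structures are the standard ones the semilinear part is trivial after adjusting, so $h_s$ is induced by an $F_s$-algebra isomorphism $\varphi\colon A'_s\to A_s$; $\Gamma$-equivariance of $h$ forces $\varphi$ to commute with the twisted Galois actions, i.e.\ to descend to an $F$-isomorphism $f\colon A'\to A$ with $\SB(f)=h$.

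An alternative, and perhaps cleaner, route I would keep in reserve is purely $2$-categorical: both $\CSA_F^n$ and $\Proj_\Gamma^{n-1}$ are fibre categories of stacks that become, after base change to $F_s$, the one-object groupoid with automorphism group $\PGL_n(F_s)$ (on one side via $\End_{F_s}(V)$, on the other via $\mathbb P^{n-1}(F_s)$), and the contravariant isomorphism $\Aut_{F_s}\big(\End_{F_s}(V)\big)\cong\PGL_n(F_s)\cong\Aut\big(\mathbb P^{n-1}(F_s)\big)$ is $\Gamma$-equivariant; descent theory then yields the anti-equivalence on the level of $F$-forms at once, compatibly with the bijection of Proposition on isomorphism classes. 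One still has to say that $A\mapsto\SB(A)$ is the functor realizing this abstract equivalence, which is exactly the incidence-geometric description of $\SB$ given above.

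The main obstacle is the fullness step, specifically invoking the fundamental theorem of projective geometry correctly and controlling the semilinear ambiguity: a priori a collineation of $\mathbb P^{n-1}(F_s)$ is induced by a $\sigma$-semilinear map for some $\sigma\in\Aut(F_s)$, and one must argue that for a \emph{morphism of $\Gamma$-projective spaces} — which by definition respects the given $F$-rational structures — the relevant $\sigma$ can be taken trivial, so that the map is realized by an honest $F_s$-algebra isomorphism before descent. Once that point is pinned down, everything else is the routine bookkeeping already implicit in the proof of the preceding Proposition.
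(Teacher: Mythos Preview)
Your proposal is correct and follows exactly the paper's route: invoke Mac Lane's criterion to reduce to showing that $\SB$ is fully faithful and essentially surjective, after which the paper simply defers all details to Jahnel's survey. Your explicit verification is the standard one; the only superfluous step is the detour through the fundamental theorem of projective geometry for fullness, since morphisms in $\Proj_\Gamma^{n-1}$ are by construction $\Gamma$-equivariant elements of $\PGL_n(F_s)$ (this is implicit in the paper's identification of the automorphism group with $\PGL_n(F_s)$ just before the Proposition), so any such $h$ is already $F_s$-linear and Galois descent for algebra isomorphisms applies directly.
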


\begin{proof}
Applying \cite[Theorem~1, p.~93]{McL} it suffices to show that the
functor $\SB$ is fully faithfull and essentially surjective.
We refer to \cite{Jahnel} for details.
\end{proof}

\subsection{$\Gamma$-sets and \'etale algebras}

Finite sets with a continuous left action of $\Gamma$ (for the
discrete topology) are called (finite) $\Gamma$-sets. They form a
category $\Set_\Gamma$ whose morphisms are the $\Gamma$-equivariant
maps. Finite direct products and direct sums (= disjoint unions) are
defined in this category. We denote by $\lvert X\rvert$ the
cardinality of any finite set $X$. 

Following Tits \cite{Tits74} we define a \emph{thin
  $\Gamma$-projective space of dimension $n-1$}
as a $\Gamma$-set $X$ of $n$ elements. 
A \emph{thin $(k-1)$-dimensional linear subvariety} 
is a $\Gamma$-subset of $X$ with $k$ elements. For every integer $k, \ 1\leq k \leq n$, 
the $\Gamma$-set
\[
\begin{array}{lcl}
Gr_{k-1}(X) &= &\{\text{$(k-1)$-dimensional linear varieties contained in $X$}\}\\
&=& \{\text{$k$-element subsets of $X$} \}
\end{array}
\]
is a \emph{thin $\Gamma$-Grassmannian}. As for $\Gamma$-projective spaces
over $F$, we shall not distinguish between a finite $\Gamma$-set $X$ and its
scheme $\overline X$ over $F$  of fixed points of the $\Gamma$-action.
Like $\Gamma$-projective spaces over $F$, thin $\Gamma$-projective spaces
are related with  algebraic structures.
We recall that
a finite-dimensional commutative $F$-algebra $L$ is called
\emph{\'etale} (over $F$) if $L\otimes_FF_s$ is isomorphic to the
$F_s$-algebra $F_s^n=F_s\times\cdots\times F_s$ ($n$ factors) for some
$n\geq 1$.  \'Etale
$F$-algebras are the direct products of finite separable field
extensions of $F$. We refer to \cite[\S18.A]{KMRT} for various equivalent
characterizations of \'etale $F$-algebras.
These algebras (with $F$-algebra homomorphisms)
form a category $\Et_F$ in which finite direct products and finite
direct sums (= tensor products) are defined.
 

 

For any \'etale $F$-algebra $L$ of dimension $n$, the set of
$F$-algebra homomorphisms
\[
\funcX(L)=\Hom_\text{$F$-alg}(L, F_s)
\]
is a $\Gamma$-set of $n$ elements since $\Gamma$ acts on $F_s$.
Conversely, if $X$ is a $\Gamma$-set of $n$ elements, the $F$-algebra
$\funcM(X)$ of $\Gamma$-equivariant maps $X\to F_s$ is an \'etale
$F$-algebra of dimension~$n$,
\[
\funcM(X)=\{f\colon X\to F_s\mid \gamma\bigl(f(x)\bigr)= f({}^\gamma
x) \text{ for $\gamma\in\Gamma$, $x\in X$}\}.
\]
As first observed by Grothendieck, there are canonical isomorphisms
\[
\funcM\bigl(\funcX(L)\bigr)\cong L,\qquad
\funcX\bigl(\funcM(X)\bigr)\cong X,
\]
so that the functors $\funcM$ and $\funcX$ define an anti-equivalence
of categories\footnote{We let morphisms of $\Gamma$-sets act on the right 
  of the arguments (with the exponential notation) and use the usual 
  function notation for morphisms in the anti-equivalent category
  of \'etale algebras.}
\begin{equation}
  \label{eq:anteq}
  \Set_\Gamma\equiv\Et_F
\end{equation}
(see \cite[Proposition (4.3), p.~25]{deligne} or \cite[(18.4)]{KMRT}),
which we refer to as  the \emph{Grothendieck correspondence}.
Under this anti-equivalence, the cardinality of $\Gamma$-sets
corresponds to the dimension of \'etale $F$-algebras. The disjoint
union $\sqcup$ in $\Set_\Gamma$ corresponds to the product $\times $
in $\Et_F$ and the product $\times$ in $\Set_\Gamma$ to the tensor
product $\tens$ in $\Et_F$. For any integer $n\geq1$, we let $\Et^n_F$
denote the groupoid whose objects are $n$-dimensional
\'etale $F$-algebras and whose morphisms are $F$-algebra isomorphisms,
and $\Set^n_\Gamma$ the groupoid of $\Gamma$-sets with $n$ elements.
The anti-equivalence \eqref{eq:anteq} restricts to an anti-equivalence
$\Set_\Gamma^n \equiv\Et_F^n$.  \label{footnote1}
Isomorphism
classes are in bijection with  $H^1(\Gamma, \Sym_n)$,
where $\Gamma$ acts trivially on $\Sym_n$ (see \cite[29.9]{KMRT}).

The split \'etale algebra $F^n$
corresponds to the $\Gamma$-set $\boldsymbol{n}$ of $n$ elements with
trivial $\Gamma$-action.  \'Etale algebras of dimension~$2$ are also
called \emph{quadratic} \'etale algebras.

\subsection{Embeddings of $\Gamma$-sets in Severi--Brauer varieties}
 For $X$ a $\Gamma$-set of $n$ elements and a $\Gamma$-projective space
 $P$ of dimension $n-1$, we define a
\emph{general embedding}   
\[
\varepsilon\colon X\hookrightarrow P
\]
to be a
$\Gamma$-equivariant injective map whose image consists
of $n$ points in general position, i.e., not contained in a proper
subspace\footnote{Such embeddings are called 
\emph{frames} in \cite{Tits74}.}. The map $\varepsilon$ then induces a map 
\[
Gr_{k-1}\varepsilon\colon Gr_{k-1}(X) \hookrightarrow Gr_{k-1}(P)
\qquad\text{for $1\leq k\leq n$},
\]
which carries each $k$-element subset of $X$ to the linear variety
spanned by its image. 
In the rest of this subsection, we relate $F$-algebra embeddings of
$n$-dimensional \'etale $F$-algebras in a central simple $F$-algebra $A$ with general embeddings of
$\Gamma$-sets of $n$ elements in $\SB(A)$. We first discuss the case
where the \'etale $F$-algebra is split.

\begin{prop}
  \label{prop:splitembed}
  Let $L$ be a split \'etale $F$-algebra of dimension~$n$, and let $A$
  be a central simple $F$-algebra of degree~$n$. For every $F$-algebra
  embedding $\varepsilon\colon L\hookrightarrow A$, there is an
  $F$-algebra isomorphism $\varphi\colon A\to\End_FL$ such that
  $\varphi\circ\varepsilon\colon L\hookrightarrow\End_FL$ is the
  regular representation, which maps each $x\in L$ to multiplication
  by $x$.
\end{prop}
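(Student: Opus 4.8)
The plan is to show first that the mere existence of the embedding $\varepsilon$ forces $A$ to be split, and then to correct an arbitrary splitting isomorphism by an inner automorphism so that it carries $\varepsilon$ onto the regular representation.

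For the first point I would write $1=e_1+\dots+e_n$ for the decomposition of the unit of $L\cong F^n$ into its primitive idempotents and put $f_i=\varepsilon(e_i)\in A$. Since $\varepsilon$ is an injective homomorphism of $F$-algebras, the $f_i$ are nonzero, pairwise orthogonal idempotents of $A$ summing to $1$, so $A=\bigoplus_{i=1}^n Af_i$ is a direct sum of $n$ nonzero left ideals and therefore has at least $n$ composition factors as a left module over itself. All of these are isomorphic to the unique simple left $A$-module $S$ (recall $A$ is simple), whence $\dim_F A\geq n\cdot\dim_F S$; on the other hand $A$ acts faithfully on $S$, so $A\hookrightarrow\End_F S$ and $\dim_F A\leq(\dim_F S)^2$. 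As $\dim_F A=n^2$, these two inequalities force $\dim_F S=n$ and $A\isom\End_F S$. In particular $A$ is split, and I fix an $F$-algebra isomorphism $\psi\colon A\isom\End_F V$ with $\dim_F V=n$. This is the step carrying real content; what follows is routine linear algebra.

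Next I would use $\psi\circ\varepsilon\colon L\to\End_F V$ to view $V$ as a left $L$-module, with $x\cdot v=\psi(\varepsilon(x))(v)$. The idempotents decompose $V=\bigoplus_{i=1}^n e_iV$, and each summand $e_iV=\psi(f_i)(V)$ is nonzero because $\psi(f_i)\neq 0$; as there are $n$ nonzero summands in an $n$-dimensional space, every $e_iV$ is one-dimensional. The regular module $L$ has exactly the same isotypic shape (each $e_iL=Fe_i$ is one-dimensional), so $L$ and $V$ are isomorphic as left $L$-modules: there is an $F$-linear bijection $h\colon L\to V$ with $h(xy)=\psi(\varepsilon(x))(h(y))$ for all $x,y\in L$. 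I would then set $\varphi\colon A\to\End_F L$, $\varphi(a)=h^{-1}\circ\psi(a)\circ h$, which is a composite of $F$-algebra isomorphisms, and compute, for $x,y\in L$, that $\varphi(\varepsilon(x))(y)=h^{-1}\bigl(\psi(\varepsilon(x))(h(y))\bigr)=h^{-1}\bigl(h(xy)\bigr)=xy$; thus $\varphi\circ\varepsilon$ is the regular representation, as desired. The main obstacle is the splitness of $A$ — equivalently, the standard fact that a central simple $F$-algebra of degree $n$ is split as soon as it contains a subalgebra isomorphic to $F^n$; granting that, the rest amounts to bookkeeping with idempotents and a dimension count.
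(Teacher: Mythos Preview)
Your proof is correct and follows essentially the same architecture as the paper's: use the primitive idempotents of $L\cong F^n$ to decompose $A$, deduce that $A$ is split, and then transport the resulting decomposition of $V$ to the decomposition $L=\bigoplus e_iL$ to obtain $\varphi$. The only substantive difference is in the splitness step: the paper observes directly that the $n$ nonzero right ideals $\varepsilon(e_i)A$ summing to $A$ must each have dimension $n$ because every right ideal has dimension divisible by $n\cdot\ind A$, forcing $\ind A=1$; you instead bound $\dim_F S$ for the simple module $S$ from above and below via the composition length of $A$ and the faithfulness of $A$ on $S$. Both arguments are standard and short; yours is a touch more self-contained in that it avoids invoking the index, while the paper's is marginally quicker for a reader who already has that fact in hand.
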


\begin{proof}
  Let $(e_i)_{1\leq i \leq n}$ be the collection of primitive idempotents of
  $L$. We have
  \[
  A=\varepsilon(e_1)A\oplus\cdots\oplus\varepsilon(e_n)A.
  \]
  Since the dimension of every right ideal of $A$ is a multiple of
  $n\,\ind A$, we must have
  \[
  \dim\varepsilon(e_i)A=n\text{ for $1\leq i\leq n$},\qquad
  \text{and}\quad\ind A=1.
  \]
  Identifying $A=\End_FV$ for some $F$-vector space $V$ of
  dimension~$n$, we may view $\varepsilon(e_1)$, \ldots,
  $\varepsilon(e_n)$ as projections on $1$-dimensional subspaces
  $V_1$, \ldots, $V_n$ such that $V=V_1\oplus\cdots\oplus V_n$. Any
  $F$-vector space isomorphism $V\to L$ that maps $V_i$ to $e_iL$ for
  all $1 \leq i \leq n$ determines an $F$-algebra isomorphism $\varphi\colon
  A\to\End_FL$ as required.
\end{proof}

Now, let $L$ be an arbitrary $n$-dimensional \'etale
$F$-algebra. Consider an $F$-algebra embedding into a central simple
$F$-algebra of degree~$n$:
\[
\varepsilon\colon L\hookrightarrow A.
\]
For every $\xi\in \funcX(L):=\Hom_{\text{$F$-alg}}(L,F_s)$, set
\begin{equation} \label{eq:epsstar}
\varepsilon_*(\xi)=\bigl\{
x\in A_s\mid \bigl(1\otimes\xi(\ell)\bigr)\cdot x =
\bigl(\varepsilon(\ell)\otimes1\bigr)\cdot x \text{ for all $\ell\in
  L$}\bigr\}.
\end{equation}

\begin{prop}
  \label{prop:embed}
  The right ideal $\varepsilon_*(\xi)$ is $n$-dimensional, and
  $\varepsilon_*$ is a general embedding
  \[
  \varepsilon_*\colon \funcX(L)\hookrightarrow\SB(A).
  \]
\end{prop}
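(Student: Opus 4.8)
The plan is to verify the statement after base change to $F_s$, where everything becomes explicit, and then to descend using Galois equivariance. First I would reduce to the split case: choose an isomorphism $\alpha\colon A_s\iso\End_{F_s}(V)$ for an $n$-dimensional $F_s$-vector space $V$, and an isomorphism $L_s\iso F_s^n$; under the latter, $\funcX(L)$ is identified with the standard index set $\{1,\dots,n\}$ and the composite $L\hookrightarrow A\hookrightarrow A_s\iso\End_{F_s}(V)$ becomes an $F_s$-algebra embedding of $F_s^n$. By the argument already used in Proposition~\ref{prop:splitembed} (applied over $F_s$), the primitive idempotents of $F_s^n$ go to projections onto a direct sum decomposition $V=V_1\oplus\cdots\oplus V_n$ into lines $V_\xi$ indexed by $\xi\in\funcX(L)$.

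Next I would identify $\varepsilon_*(\xi)$ explicitly. Unwinding the defining condition \eqref{eq:epsstar}, an element $x\in A_s\cong\End_{F_s}(V)$ lies in $\varepsilon_*(\xi)$ exactly when $\varepsilon(\ell)\otimes 1$ acts on $x$ (by left multiplication) the same way as the scalar $\xi(\ell)$, for all $\ell\in L$; since $\varepsilon(e_\eta)$ is the projector onto $V_\eta$ and $\xi(e_\eta)=\delta_{\xi\eta}$, this forces the image of $x\colon V\to V$ to lie in $V_\xi$. Thus $\varepsilon_*(\xi)=\Hom_{F_s}(V,V_\xi)$, which is an $n$-dimensional right ideal of $\End_{F_s}(V)$, i.e.\ a point of $\SB(A)$. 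Injectivity of $\xi\mapsto\varepsilon_*(\xi)$ is clear because the lines $V_\xi$ are distinct, and the $n$ resulting points are in general position precisely because $V_1,\dots,V_n$ span $V$ (no proper subspace contains all of them, as $V=\bigoplus V_\xi$).

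The remaining point is Galois equivariance of $\varepsilon_*\colon\funcX(L)\to\SB(A)$, and here the key observation is that formula \eqref{eq:epsstar} is manifestly natural: it is written entirely in terms of the $F$-algebra map $\varepsilon$ (which is defined over $F$, hence $\Gamma$-fixed) and the pairing between $L$ and $F_s$ that defines the $\Gamma$-action on $\funcX(L)$. Concretely, for $\gamma\in\Gamma$ one checks that applying $\gamma$ (acting on $A_s=A\otimes F_s$ through the second factor, and on $F_s$-valued characters in the usual way) carries the solution set of the system defining $\varepsilon_*(\xi)$ to the solution set of the system defining $\varepsilon_*({}^\gamma\xi)$, because $\gamma\bigl(\bigl(1\otimes\xi(\ell)\bigr)x\bigr)=\bigl(1\otimes({}^\gamma\xi)(\ell)\bigr)\gamma(x)$ and $\gamma\bigl(\bigl(\varepsilon(\ell)\otimes1\bigr)x\bigr)=\bigl(\varepsilon(\ell)\otimes1\bigr)\gamma(x)$. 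Hence ${}^\gamma\bigl(\varepsilon_*(\xi)\bigr)=\varepsilon_*({}^\gamma\xi)$, so $\varepsilon_*$ is $\Gamma$-equivariant.

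I expect the only mildly delicate step to be the bookkeeping in the equivariance check — keeping straight how $\Gamma$ acts on $A_s$, on right ideals, on $\funcX(L)$, and on the induced $\Gamma$-projective space structure of $\SB(A)$ — but since \eqref{eq:epsstar} uses no choice of splitting, this is a formal verification rather than a genuine obstacle; the substantive content (dimension count and general position) is essentially the split computation of Proposition~\ref{prop:splitembed} carried out over $F_s$.
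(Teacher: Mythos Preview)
Your proposal is correct and follows essentially the same route as the paper: reduce to the split situation over $F_s$ via Proposition~\ref{prop:splitembed}, identify $\varepsilon_*(\xi)$ with $\Hom_{F_s}(V,V_\xi)$ to get the dimension count and general position, and then verify $\Gamma$-equivariance directly from the defining formula~\eqref{eq:epsstar}. The only cosmetic difference is that the paper takes $V=L_s$ from the outset (so the lines are $e_iL_s$), and phrases general position as $\sum_i\varepsilon_*(\xi_i)=A_s$ rather than $\bigoplus_i V_i=V$, but these are the same statement.
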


\begin{proof}
  Let $L_s=L\otimes_FF_s$, a split \'etale $F_s$-algebra, and
  $\varepsilon_s=\varepsilon\otimes\id_{F_s}\colon L_s\hookrightarrow
  A_s$. By Proposition~\ref{prop:splitembed}, we may identify
  $A_s=\End_{F_s}L_s$ in such a way that $\varepsilon_s\colon
  L_s\hookrightarrow\End_{F_s}L_s$ is the regular representation. Let
  $(e_i)_{1\leq i\leq n}$ be the collection of primitive idempotents of
  $L_s$. Using the canonical correspondence
  $\Hom_{\text{$F$-alg}}(L,F_s)=\Hom_{\text{$F_s$-alg}}(L_s,F_s)$, we
  may set
  \[
  \funcX(L)=\{\xi_1,\ldots,\xi_n\}
  \]
  where $\xi_i\colon L_s\to F_s$ maps $e_i$ to $1$ and $e_j$ to $0$
  for $j\neq i$. For $f\in\End_{F_s}L_s$, the condition
  \[
  \bigl(1\otimes\xi_i(\ell)\bigr)\cdot f =
  \bigl(\varepsilon(\ell)\otimes 1\big)\circ f \qquad\text{for all
    $\ell\in L$}
  \]
  is equivalent to
  \[
  \bigl(1\otimes\xi_i(\ell)\bigr)\cdot f =
  \varepsilon_s(\ell)\circ f \qquad\text{for all
    $\ell\in L_s$.}
  \]
  For $\ell=e_1$, \ldots, $e_n$, this condition can be rewritten as
  \[
  e_j\cdot f(x)=0\quad\text{and}\quad e_i\cdot f(x)=f(x)
  \quad\text{for all $x\in L_s$.}
  \]
  Therefore, 
  \[
  \varepsilon_*(\xi_i)=\Hom_{F_s}(L_s,e_iL_s),
  \]
  which shows that $\dim\varepsilon_*(\xi_i)=n$. Moreover,
  \[
  \varepsilon_*(\xi_1)+\cdots+\varepsilon_*(\xi_n) =
  \Hom_{F_s}(L_s,e_1L_s+\cdots+e_nL_s)=\End_{F_s}L_s.
  \]
  Therefore $\varepsilon_*(\xi_1)$, \ldots,
  $\varepsilon_*(\xi_n)$ are in general position in the projective
  space $\SB(A)$.

  To complete the proof, we show $\varepsilon_*$ is
  $\Gamma$-equivariant. For $\gamma\in\Gamma$, we have
  \[
  \gamma\bigl(\varepsilon_*(\xi)\bigr) = \{x\in A_s\mid
  \bigl(1\otimes\xi(\ell)\bigr)\cdot\gamma^{-1}(x)=
  \bigl(\varepsilon(\ell)\otimes1\bigr)\cdot 
  \gamma^{-1}(x)\text{ for all $\ell\in L$}\}.
  \]
  By applying $\gamma$ to each side, we see that the condition
  \[
  \bigl(1\otimes\xi(\ell)\bigr)\cdot\gamma^{-1}(x)=
  \bigl(\varepsilon(\ell)\otimes1\bigr)\cdot 
  \gamma^{-1}(x)
  \]
  is equivalent to 
  \[
  \bigl(1\otimes{}^\gamma\xi(\ell)\bigr)\cdot x=
  \bigl(\varepsilon(\ell)\otimes1\bigr)\cdot x.
  \]
  Thus, $\gamma\bigl(\varepsilon_*(\xi)\bigr) = \varepsilon_*({}^\gamma\xi)$.
\end{proof}

Conversely, every general embedding $X\hookrightarrow\SB(A)$ of a
$\Gamma$-set $X$ of $n$ elements arises from an $F$-algebra embedding,
as we now show.

Given $n$-dimensional right ideals $I_1$, \ldots, $I_n\subset A_s$
such that $\{I_1,\ldots, I_n\}$ is stable under $\Gamma$ and
$I_1+\cdots+I_n=A_s$, let
\[
L_s=\{x\in A_s\mid xI_i\subset I_i\text{ for $1 \leq i\leq n$}\}.
\]
The set $L_s$ is an $F_s$-subalgebra of $A_s$ stable under the action of
$\Gamma$. Let $L=L_s^\Gamma$ be the fixed $F$-subalgebra.

\begin{prop}
  \label{prop:embedconv}
  The $F$-algebra $L$ is \'etale of dimension~$n$ and
  $\{I_1,\ldots,I_n\}$ is the image of the general embedding
  $\varepsilon_*\colon \funcX(L)\hookrightarrow\SB(A)$ induced by the
  inclusion $\varepsilon\colon L\hookrightarrow A$.
\end{prop}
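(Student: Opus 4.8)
The plan is to do the essential work after extending scalars to $F_s$, where $A$ splits, and then to bring the resulting \'etale structure back down to $F$ by Galois descent. So fix an isomorphism $A_s\iso\End_{F_s}(V)$ with $\dim_{F_s}V=n$. By the dictionary recalled above, each $n$-dimensional right ideal $I_j$ has the form $I_j=\Hom_{F_s}(V,U_j)=\{x\in\End_{F_s}(V)\mid \image x\subseteq U_j\}$ for a unique $1$-dimensional subspace $U_j\subseteq V$, and the hypothesis $I_1+\dots+I_n=A_s$ amounts to $U_1+\dots+U_n=V$; being a sum of $n$ lines in an $n$-dimensional space, this forces $V=U_1\oplus\dots\oplus U_n$. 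A direct check shows that for $x\in\End_{F_s}(V)$ the condition $xI_j\subseteq I_j$ is equivalent to $x(U_j)\subseteq U_j$, so $L_s$ is the algebra of endomorphisms of $V$ that are diagonal for the decomposition $V=\bigoplus_jU_j$; as each $U_j$ has dimension $1$, this gives $L_s\cong\prod_{j=1}^n\End_{F_s}(U_j)\cong F_s^n$, a split \'etale $F_s$-algebra of dimension $n$.

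Next I would descend. The $\Gamma$-action on $A_s$ is continuous and $F_s$-semilinear, and $L_s$ is a $\Gamma$-stable $F_s$-subalgebra because $\Gamma$ permutes the $I_j$; by Galois descent, $L=L_s^\Gamma$ is therefore an $F$-algebra with $\dim_FL=n$ for which the canonical map $L\tens_FF_s\to L_s$ is an isomorphism, and since $L_s\cong F_s^n$ this makes $L$ \'etale of dimension $n$ over $F$. Taking $\Gamma$-invariants in $L_s\subseteq A_s$ and using $A=A_s^\Gamma$ identifies $L$ with the $F$-subalgebra $L_s\cap A$ of $A$; thus $\varepsilon\colon L\hookrightarrow A$ is just the inclusion, and $\varepsilon\tens\id_{F_s}$ is the inclusion $L_s\hookrightarrow A_s$.

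It then remains to compute the image of $\varepsilon_*$. Identify $\funcX(L)=\Hom_{F_s\text{-alg}}(L_s,F_s)$, and let $f_j\in L_s$ be the primitive idempotent acting as the projection of $V$ onto $U_j$ along $\bigoplus_{k\neq j}U_k$; then $\funcX(L)=\{\xi_1,\dots,\xi_n\}$ with $\xi_i(f_j)=\delta_{ij}$, and $\{f_1,\dots,f_n\}$ is an $F_s$-basis of $L_s$. Since $L_s=L\tens_FF_s$ and the condition in \eqref{eq:epsstar} is linear in $\ell$, it may be imposed over $L_s$ rather than $L$, and so tested on $f_1,\dots,f_n$; for $\xi=\xi_i$ this reads $f_i\circ x=x$ and $f_j\circ x=0$ for $j\neq i$, the latter relations being automatic once $f_i\circ x=x$. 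As $f_i$ is the projection onto $U_i$, the condition $f_i\circ x=x$ says exactly $\image x\subseteq U_i$, so $\varepsilon_*(\xi_i)=\Hom_{F_s}(V,U_i)=I_i$; hence $\varepsilon_*$ carries $\funcX(L)$ bijectively onto $\{I_1,\dots,I_n\}$, and that it is moreover a general embedding is Proposition~\ref{prop:embed}. I do not expect a genuine obstacle here: the computation is routine once the base change is set up, and the only point calling for care is the bookkeeping — matching, compatibly with the $\Gamma$-action, the lines $U_j\subseteq V$, the primitive idempotents $f_j$ of $L_s$, the characters $\xi_j$ of $L$, and the right ideals $I_j$, and in particular getting the module conventions right so that ``$xI_j\subseteq I_j$'' means precisely ``$x(U_j)\subseteq U_j$''.
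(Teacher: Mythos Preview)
Your proof is correct and follows essentially the same approach as the paper's: identify $A_s$ with an endomorphism algebra so that the $I_j$ correspond to a direct sum decomposition into lines, recognise $L_s$ as the diagonal subalgebra $\cong F_s^n$, descend to conclude $L$ is \'etale of dimension~$n$, and compute $\varepsilon_*(\xi_i)=I_i$ on primitive idempotents. The only difference is presentational: the paper works with $M_n(F_s)$ and rows of matrices, while you use the coordinate-free language of $\End_{F_s}(V)$ and subspaces $U_j$; your version is slightly more explicit about the Galois descent step that the paper compresses into the single assertion ``$L_s=L\otimes_FF_s$''.
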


\begin{proof}
  We may identify $A_s=M_n(F_s)$ in such a way that for $1 \leq i\leq n$ the
  ideal $I_i$ consists of matrices whose nonzero entries are on the
  $i$-th row. For $x=(x_{ij})_{1\leq i,j\leq n}$ we have $xI_j\subset I_j$ if
  and only if $x_{ij}=0$ for $i\neq j$. Therefore, $L_s$ is the
  algebra of diagonal matrices. Since $L_s=L\otimes_FF_s$, it follows
  that $L$ is an \'etale $F$-algebra of dimension~$n$. Under the
  canonical identification $\funcX(L)=\Hom_{\text{$F_s$-alg}}(L_s,F_s)$ we
  have $\funcX(L)=\{\xi_1,\ldots,\xi_n\}$ where $\xi_i\colon L_s\to F_s$
  yields the $i$-th diagonal entry. Matrix computation shows that
  $\varepsilon_*(\xi_i)=I_i$. 
\end{proof}

\begin{remark}
  The definition of the map $\varepsilon_*$ is inspired by the
  correspondence between maximal subfields of a division algebra and
  closed points on the corresponding Severi--Brauer variety set up by
  Merkurjev in \cite{Merk}.
\end{remark}

\section{Quadrics and Algebras with
  Involution} \label{sec:involutions}

\subsection{$\Gamma$-quadrics and quadratic pairs}
As in the preceding section, let $F$ be an arbitrary field and
$\Gamma$ the Galois group of some separable closure $F_s$ of $F$. We
call \emph{$\Gamma$-quadric} in a $(2n-1)$-dimensional
$\Gamma$-projective space $P$ over $F$ any quadric in $P$ defined by a
hyperbolic quadratic form and stable under the action of $\Gamma$. The
$\Gamma$-quadrics we consider in the sequel arise from quadratic pairs
on central simple algebras, which we now recall from~\cite[\S5]{KMRT}.
\medbreak\par
Let $B$ be an arbitrary $F$-algebra. An
\emph{involution}\footnote{We only consider $F$-linear involutions,
  which are sometimes called \emph{of the first kind}, when
  semi-linear involutions are allowed.} on $B$ is an
$F$-linear anti-automor\-phism of $B$ of order~$2$. The sets of
symmetric, skew-symmetric, symmetrized and alternating elements are the
following subspaces of $B$:
\[
\begin{array}{lcl}
\SSym(B,\sigma) &= &  \{x \in B \ \vert \ \sigma(x) = x\}\\
\Skew(B,\sigma) &= &  \{x \in B \ \vert \ \sigma(x) = -x\}\\
\SSymd(B,\sigma) &= &  \{x + \sigma(x) \ \vert \ x \in B \}\\
\AAlt(B,\sigma) &= &  \{x -\sigma(x) \ \vert \ x \in B \}.
\end{array}
\]
Now, let $A$ be a central simple $F$-algebra of degree~$2n$. A
\emph{quadratic pair} on $A$ is a pair $(\sigma,f)$ where $\sigma$ is
an involution on $A$ and $f\colon\SSym(A,\sigma)\to F$ is a linear
map, subject to the following conditions:
\begin{enumerate}
\item[(1)]
$\dim_F\SSym(A,\sigma) = n(2n+1)$ and
$ \Trd_A\big(\Skew(A,\sigma)\big) = \{0\}$;
\item[(2)]
$f\big(x +\sigma(x) \big)= \Trd_A(x)$ for all $x \in A$.
\end{enumerate}
If $A =\End_FV$, any quadratic pair $(\sigma,f)$ on $A$
is attached to a nonsingular 
quadratic form $q$ on $V$. The involution $\sigma$ is the involution
adjoint to the quadratic form $q$ and we refer to \cite[\S5B]{KMRT} for
the definition of $f$. 

For a central simple algebra $A$ over a field of characteristic not $2$ 
a quadratic pair  $(\sigma,f)$ is determined by the involution $\sigma$,
which has to be of orthogonal type (see \cite[\S5B]{KMRT}).
\medbreak\par
Any quadratic pair $(\sigma,f)$ on $A$ extends to a
quadratic pair $(\sigma_s,f_s)$ on $A_s=A\otimes_FF_s$. Since
$A_s$ is split, we may 
identify $A_s=\End_{F_s}(V)$ for some $2n$-dimensional $F_s$-vector
space $V$. Then $(\sigma_s,f_s)$ is adjoint to some hyperbolic
quadratic form $q$ 
on $V$, which is uniquely determined up to a scalar factor, see
\cite[(5.11)]{KMRT}. Recall from \cite[(6.5)]{KMRT} that a right ideal
$I\subset A_s$ is called \emph{isotropic for $(\sigma_s,f_s)$} when 
\[
\sigma_s(I)\cdot I=\{0\}\qquad\text{and}\qquad
f_s(I\cap\sym\sigma_s)=\{0\}.
\]
Isotropic ideals have the form $\Hom_{F_s}(V,W)$ where $W\subset V$ is
a totally isotropic subspace of $V$ for the form $q$, by
\cite[(6.6)]{KMRT}. Therefore, we may consider the quadric
$\funcQ(\sigma,f)$ in $\SB(A)$ whose points are the $2n$-dimensional right
ideals in $A_s$ that are isotropic for $(\sigma_s,f_s)$. The quadric is stable
under the action of the Galois group $\Gamma$ of $F_s/F$ and is given
by the equation $q=0$ in $\mathbb{P}(V)$ under the identification
$\SB(A)=\mathbb{P}(V)$. 

Let $\QCSA_F^{2n}$ be the groupoid of central simple algebras over $F$
of degree $2n$ 
with quadratic pairs and let $\Quad_\Gamma^{2(n-1)}$ be the groupoid
of $\Gamma$-quadrics of dimension $2(n-1)$ embedded in a
$\Gamma$-projective space of dimension $2n-1$. As in Proposition~\ref{prop:SBequiv},
we have:

\begin{prop}
\label{prop:QCSAQuad}
The rule $(A,\sigma,f) \mapsto
\funcQ(\sigma,f)$ 
induces an anti-equivalence of categories $\QCSA_F^{2n}\equiv
\Quad_\Gamma^{2(n-1)}$. 
\end{prop}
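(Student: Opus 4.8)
The plan is to argue exactly as in Proposition~\ref{prop:SBequiv}: by \cite[Theorem~1, p.~93]{McL} it is enough to prove that the functor $\funcQ$ is essentially surjective and fully faithful, and for this I would lean on the anti-equivalence $\SB\colon\CSA_F^{2n}\equiv\Proj_\Gamma^{2n-1}$ already in hand together with the description of isotropic ideals recalled before the statement. The one fact that does the real work is the following: over $F_s$, writing $A_s=\End_{F_s}V$, a quadratic pair $(\sigma_s,f_s)$ is adjoint to a hyperbolic quadratic form $q$ on $V$ determined only up to a scalar (\cite[(5.11)]{KMRT}), yet conversely $(\sigma_s,f_s)$ is \emph{uniquely} recovered from the quadric $q=0$, equivalently from the set of its $2n$-dimensional isotropic right ideals (\cite[(5.11),(6.6)]{KMRT})---rescaling $q$ alters neither the adjoint involution nor the linear form $f$. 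Thus, on the split algebra $A_s$, the assignment sending a quadratic pair to its set of $2n$-dimensional isotropic right ideals is injective.

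For essential surjectivity, I would take a $\Gamma$-quadric $Q$ of dimension $2(n-1)$ in a $\Gamma$-projective space $P$ of dimension $2n-1$, use Proposition~\ref{prop:SBequiv} to write $P=\SB(A)$ for a central simple $F$-algebra $A$ of degree $2n$ (unique up to isomorphism), and fix an identification $A_s=\End_{F_s}V$ under which $\SB(A)$ becomes $\mathbb P(V)$. Since $Q$ is a $\Gamma$-quadric it is the zero locus of a hyperbolic quadratic form $q$ on $V$ and is stable under the $\Gamma$-action. Let $(\sigma_s,f_s)$ be the quadratic pair on $A_s$ adjoint to $q$; by the uniqueness above it depends only on $Q$. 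Each $\gamma\in\Gamma$ acts on $A_s$ by an $F_s$-semilinear algebra automorphism carrying the set of $2n$-dimensional isotropic ideals of $(\sigma_s,f_s)$ onto that of ${}^\gamma(\sigma_s,f_s)$; but that set is $Q={}^\gamma Q$, so $(\sigma_s,f_s)$ is $\Gamma$-invariant. Galois descent (applied to the subspace $\SSym$, to $\sigma_s$, and to the linear form $f_s$) then yields a quadratic pair $(\sigma,f)$ on $A$ with $(\sigma_s,f_s)=(\sigma\otimes\id,f\otimes\id)$, and $\funcQ(\sigma,f)=Q$ by construction.

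For full faithfulness, given objects $(A,\sigma,f)$ and $(A',\sigma',f')$ of $\QCSA_F^{2n}$ with quadrics $Q=\funcQ(\sigma,f)\subset\SB(A)$ and $Q'=\funcQ(\sigma',f')\subset\SB(A')$, a morphism $Q\to Q'$ in $\Quad_\Gamma^{2(n-1)}$ is by definition an isomorphism of $\Gamma$-projective spaces $\SB(A)\to\SB(A')$ sending $Q$ to $Q'$; by Proposition~\ref{prop:SBequiv} it comes from a unique $F$-algebra isomorphism $\varphi$ between $A$ and $A'$, and conversely any $F$-algebra isomorphism induces an isomorphism of the underlying $\Gamma$-projective spaces. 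It then remains to check that the associated map of projective spaces carries $Q$ to $Q'$ if and only if $\varphi$ intertwines the two quadratic pairs, i.e.\ is a morphism in $\QCSA_F^{2n}$. One implication is immediate: if $\varphi$ intertwines the pairs, then $\varphi_s$ sends isotropic ideals to isotropic ideals, hence $Q$ to $Q'$. For the converse, if the induced map sends $Q$ to $Q'$ then $\varphi_s$ matches the $2n$-dimensional isotropic ideals of $(\sigma_s,f_s)$ with those of $(\sigma'_s,f'_s)$, so by the injectivity noted at the outset $\varphi_s$ intertwines $(\sigma_s,f_s)$ and $(\sigma'_s,f'_s)$; as these pairs are defined over $F$, the same holds for $\varphi$ and $(\sigma,f)$, $(\sigma',f')$. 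This gives the required bijection on morphism sets.

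The step I expect to be the main obstacle is the passage, in the essential-surjectivity argument, from $\Gamma$-stability of the quadric to $\Gamma$-invariance of the quadratic pair (and its analogue in full faithfulness): this is what prevents the statement from being a purely formal consequence of Proposition~\ref{prop:SBequiv}, and it works precisely because the quadratic pair---unlike the quadratic form, which is only defined up to a scalar---is \emph{canonically} attached to the quadric, via \cite[(5.11),(6.6)]{KMRT}. In characteristic $2$ the linear form $f$ is genuinely needed and these results must be invoked in full; in characteristic $\neq2$ it is redundant and one may work with the adjoint orthogonal involution alone.
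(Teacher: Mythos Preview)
Your approach is correct and matches the paper's: the paper gives no proof beyond the remark ``As in Proposition~\ref{prop:SBequiv}, we have:'', i.e.\ it invokes \cite[Theorem~1, p.~93]{McL} and leaves the verification of essential surjectivity and full faithfulness to the reader, just as was done (with a reference to \cite{Jahnel}) for $\SB$. Your write-up supplies precisely the details the paper omits, and the key technical input you isolate---that over $F_s$ the quadratic pair is recovered canonically from its quadric of isotropic ideals, via \cite[(5.11),(6.6)]{KMRT}---is indeed what makes the descent and the ``only if'' direction of full faithfulness go through. The paper does add, immediately after the statement, the cohomological observation that both groupoids have isomorphism classes parametrised by $H^1(\Gamma,\PGO_{2n})$; this is consistent with your argument but is a weaker assertion (bijection on objects up to isomorphism rather than an equivalence of groupoids), so your treatment is in fact more complete.
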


The automorphism group of a quadric of dimension 
$2(n-1)$  over $F_s$ or of a central simple algebra of degree $2n$
with a quadratic pair over 
$F_s$ is the projective orthogonal group $\PGO_{2n}(F_s)$ thus Galois
cohomology 
leads to  bijections of the sets $\Iso\big(\QCSA_F^{2n}  \big)$ and 
$\Iso\big( \Quad_\Gamma ^{2(n-1)} \big)$  with $H^1(\Gamma, \PGO_{2n})$.

\subsection{Thin $\Gamma$-quadrics and quadratic \'etale extensions}

A morphism of $\Gamma$-sets $Y_0\xleftarrow{\pi} Y$ is called a
\emph{$\Gamma$-covering} if the number of elements in each fiber
$y_0^{\pi^{-1}}\subset Y$ does not depend on $y_0\in Y_0$. This number is
called the \emph{degree} of the covering. Of particular importance in
the sequel are coverings of degree~$2$, 
which are also called \emph{double coverings}. Each such covering
$Y_0\xleftarrow{\pi}Y$ defines a canonical automorphism 
$Y\xleftarrow{\sigma} Y$ of order~$2$, which interchanges the elements
in each fiber of 
$\pi$. Clearly, this automorphism has no fixed points. We call
\emph{involution} of a $\Gamma$-set with an even number of elements
any automorphism of order~$2$ without fixed points. If
$Y$ is a $\Gamma$-set and $ Y\xleftarrow{\sigma} Y$ is an involution,
the set of orbits 
\[
Y/\sigma=\bigl\{\{y,y^\sigma\}\mid y\in Y\}
\]
is a $\Gamma$-set and the canonical map $(Y/\sigma)\leftarrow Y$ is a
double covering. Thus, the notions of double covering and involution
of $\Gamma$-sets are equivalent.
Paraphrasing Tits \cite[7.3]{Tits74} we call a double $\Gamma$-covering
$Y_0 \xleftarrow{\pi} Y$ with
$|Y_0| =n$, resp.\ a $\Gamma$-set with involution $(Y,\sigma)$ with
$\lvert Y\rvert=2n$,
a \emph{thin $2(n-1)$-dimensional $\Gamma$-quadric}.
Putting  $Y =\{a_1,b_1,a_2,b_2,\ldots,a_n,b_n\}$ where
$a_i^\pi=b_i^\pi$ for all~$i$, we view $Y_0$ as the set
of pairs $Y_0 = \bigl\{\{a_1,b_1\}, \{a_2,b_2\},\ldots,
\{a_n,b_n\}\bigr\}$. An \emph{isotropic 
linear subvariety  of dimension $k-1$} is a $k$-element subset of $Y$ not
containing any pair $\{a_i,b_i\}$. Thus, every isotropic linear
subvariety is contained in a maximal isotropic linear subvariety, which has
dimension~$n-1$. The maximal isotropic linear subvarieties may be viewed
as the sections of the map $\pi$; they form a $\Gamma$-set $C(Y/Y_0)$
(or $C(Y,\sigma)$) of $2^n$ elements, called the \emph{Clifford
  $\Gamma$-set} of $Y/Y_0$, which is studied in more detail in
  \S\ref{sec:Clifford}. 
\medbreak\par
\'Etale extensions are defined so as to correspond to
$\Gamma$-coverings under the anti-equivalence~\eqref{eq:anteq}:
\begin{prop}
  \label{prop:etex}
  For a homomorphism $L_0\xrightarrow{\varepsilon}L$ of \'etale
  $F$-algebras, the following conditions are equivalent:
  \begin{enumerate}
  \item[(a)] $\varepsilon$ endows $L$ with a structure of free
    $L_0$-module of rank~$d$;
  \item[(b)] the morphism of $\Gamma$-sets
    $\funcX(L_0)\xleftarrow{\funcX(\varepsilon)} \funcX(L)$ is a
    covering of degree $d$. 
  \end{enumerate}
\end{prop}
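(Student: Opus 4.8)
The plan is to prove the equivalence by passing through the Grothendieck correspondence \eqref{eq:anteq} and making the two conditions fully explicit in terms of the primitive idempotents of the algebras involved. Both conditions are insensitive to scalar extension to $F_s$: condition (a) because $L$ is free of rank $d$ over $L_0$ if and only if $L\otimes_FF_s$ is free of rank $d$ over $L_0\otimes_FF_s$ (faithfully flat descent, or simply comparing dimensions over $F$ and $F_s$), and condition (b) because the fiber cardinalities of $\funcX(\varepsilon)$ do not change under enlarging the Galois group — indeed $\funcX(L)=\Hom_{F\text{-alg}}(L,F_s)=\Hom_{F_s\text{-alg}}(L_s,F_s)$ and similarly for $L_0$. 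So first I would reduce to the split case $L_0=F^n$, $L=F^m$, where $\funcX(L_0)=\boldsymbol{n}$ and $\funcX(L)=\boldsymbol{m}$ carry trivial $\Gamma$-action, and the map $\funcX(\varepsilon)\colon\boldsymbol m\to\boldsymbol n$ is just the map of index sets dual to $\varepsilon$.

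Next, in the split setting I would analyze the structure of $\varepsilon\colon F^n\to F^m$. Writing $(e_i)_{1\le i\le n}$ for the primitive idempotents of $F^n$ and $(f_j)_{1\le j\le m}$ for those of $F^m$, the homomorphism $\varepsilon$ sends each $e_i$ to a sum $\sum_{j\in S_i} f_j$ over some subset $S_i\subseteq\{1,\dots,m\}$, and since $\sum_i e_i=1$ and the $e_i$ are orthogonal, the sets $S_1,\dots,S_n$ form a partition of $\{1,\dots,m\}$. Under this description, $L=F^m$ viewed as an $F^n=L_0$-module decomposes as $L=\bigoplus_i e_iL=\bigoplus_i \bigl(\prod_{j\in S_i}F\bigr)$, so $e_iL$ is $L_0$-free over the factor $Fe_i$ precisely of rank $\card{S_i}$; hence $L$ is a free $L_0$-module exactly when all the $\card{S_i}$ are equal, say to $d$. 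On the other hand, the dual map $\funcX(\varepsilon)\colon\boldsymbol m\to\boldsymbol n$ sends $j$ to the unique $i$ with $j\in S_i$ — that is, the fiber over $i$ is $S_i$ — so it is a covering of degree $d$ exactly when all $\card{S_i}=d$. This matches (a) with (b) in the split case.

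Finally I would assemble the two reductions: given the homomorphism $L_0\xrightarrow{\varepsilon}L$ over $F$, tensor up to $F_s$ to get $L_{0,s}\xrightarrow{\varepsilon_s}L_s$, which is a homomorphism of split \'etale $F_s$-algebras; condition (a) for $\varepsilon$ is equivalent to condition (a) for $\varepsilon_s$, condition (b) for $\funcX(\varepsilon)$ is literally condition (b) for the same map of $\Gamma$-sets now viewed with the trivial-extension structure, and the split case just proved gives the equivalence of these over $F_s$. I expect the only genuinely delicate point to be making sure the reduction to $F_s$ is clean on the module-theoretic side — that is, that "free of rank $d$" descends and ascends correctly along $F\to F_s$ for modules over the (possibly non-connected, but \'etale hence flat) algebra $L_0$; this is where I would be most careful, invoking \cite[\S18.A]{KMRT} or faithfully flat descent rather than hand-waving. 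Everything else is the bookkeeping of partitions versus fibers described above.
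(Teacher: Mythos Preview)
Your proposal is correct and follows essentially the same route as the paper: reduce to the split case over $F_s$, identify primitive idempotents with points of the $\Gamma$-sets, and observe that both conditions amount to the partition of $\boldsymbol m$ into blocks $S_i$ having all blocks of size $d$. The only stylistic difference is that the paper first rewrites condition~(a) over $F$ as $\dim_{M_i}L_i=d$ for the field factors $M_i$ of $L_0$ (which makes the passage to $F_s$ a matter of preserving dimensions rather than descending freeness), whereas you reduce to $F_s$ at the outset and handle the descent of ``free of rank $d$'' directly; your caution there is well placed but the argument goes through either way.
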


\begin{proof}
  Let $e_1$, \ldots, $e_r$ be the primitive idempotents of $L_0$, so
  that $L_0\cong M_1\times\cdots\times M_r$ with $M_i=e_iL_0$ a finite
  separable field extension of $F$. Then $\varepsilon(e_1)$, \ldots,
  $\varepsilon(e_r)$ are orthogonal idempotents of $L$ and, letting
  $L_i=\varepsilon(e_i)L$ we have $L\cong L_1\times\cdots\times L_r$
  where each $L_i$ is an $M_i$-vector space. Condition~(a) holds if and
  only if
  \[
  \dim_{M_1}L_1=\cdots=\dim_{M_r}L_r=d.
  \]
  To prove (a) and (b) are equivalent, we may extend scalars to $F_s$
  and therefore assume $F=F_s=M_i$ for all $i$. Then $(e_i)_{i=1}^r$
  is a base of $L_0$, and we may identify $\funcX(L_0)$ with the dual base
  $(e_i^*)_{i=1}^r$. The number of elements of $\funcX(L)$ in the
  fiber above $e_i^*$ is the dimension $\dim_{M_i}L_i$, so (a) and (b)
  are equivalent.
\end{proof}

When the equivalent conditions of Proposition~\ref{prop:etex} hold,
the map $\varepsilon$ is injective; identifying $L_0$ to a subalgebra of
$L$, we call $L$ an \emph{\'etale extension} of $L_0$ of
degree~$d$. \'Etale extensions of degree~$2$ are called
\emph{quadratic \'etale extensions}. They can also be defined by means
of automorphisms, as we now show.

Let $\sigma\colon L\to L$ be an automorphism of order~$2$ of an
\'etale $F$-algebra $L$, and let $L_0=\SSym(L,\sigma)\subset L$ denote the
$F$-subalgebra of fixed elements, which is necessarily \'etale. 

\begin{prop}\label{prop:invo1}
The following conditions are equivalent:
  \begin{enumerate}
  \item[(a)] the inclusion $L_0\hookrightarrow L$ is a
    quadratic \'etale extension of $F$-algebras;
  \item[(b)] the automorphism $\funcX(\sigma)$ is an involution on
    $\funcX(L)$. 
  \end{enumerate}
\end{prop}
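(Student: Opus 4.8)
The plan is to translate the statement through the Grothendieck correspondence \eqref{eq:anteq} and reduce it to Proposition~\ref{prop:etex}, invoking the dictionary between involutions and double coverings set up in the preceding paragraphs. First I would observe that an automorphism $\sigma$ of order~$2$ of $L$ corresponds under $\funcX$ to an automorphism $\funcX(\sigma)$ of order~$2$ of the $\Gamma$-set $\funcX(L)$, since $\funcX$ is an anti-equivalence of groupoids and hence preserves the order of automorphisms. Next, the key point linking the two sides is that the $F$-subalgebra of fixed elements corresponds to the $\Gamma$-set of orbits: one checks that $\funcX(L_0) = \funcX(L)/\funcX(\sigma)$ as $\Gamma$-sets, with $\funcX(\varepsilon)\colon \funcX(L)\to\funcX(L_0)$ the canonical orbit map. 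This is a direct computation: a homomorphism $\xi\colon L\to F_s$ restricts to $L_0$, and two homomorphisms $\xi$, $\xi'$ have the same restriction to $L_0 = \SSym(L,\sigma)$ if and only if $\xi' = \xi$ or $\xi' = \xi\circ\sigma$, which can be verified by extending scalars to $F_s$ where $L_s \cong F_s^n$ and $\sigma$ permutes the coordinates.

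With these identifications in hand, the equivalence becomes almost formal. Condition~(a), that $L_0 \hookrightarrow L$ is a quadratic \'etale extension, means by definition that $L$ is free of rank~$2$ over $L_0$; by Proposition~\ref{prop:etex} (with $d = 2$) this is equivalent to $\funcX(\varepsilon)\colon \funcX(L)\to\funcX(L_0)$ being a covering of degree~$2$, i.e.\ every fiber has exactly two elements. Since $\funcX(L_0)$ is the set of $\funcX(\sigma)$-orbits, each fiber is an orbit $\{\xi, \xi\circ\sigma\}$, and this has two elements precisely when $\xi\circ\sigma \neq \xi$ for every $\xi$, i.e.\ when $\funcX(\sigma)$ has no fixed point. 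By the definition recalled just before the proposition, an automorphism of order~$2$ without fixed points on a $\Gamma$-set with an even number of elements is exactly an involution; so this is condition~(b). (One should note in passing that $\funcX(\sigma)$ does have order exactly~$2$ and not~$1$ once (a) holds, since otherwise $L = L_0$ and the extension would have degree~$1$, not~$2$; conversely an involution has order exactly~$2$ by definition.)

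The main obstacle — really the only content beyond bookkeeping — is the identification $\funcX(L_0) \cong \funcX(L)/\funcX(\sigma)$, and within it the nontrivial direction: showing that distinct $\Gamma$-invariant-free homomorphisms agreeing on $\SSym(L,\sigma)$ must be related by $\sigma$. I expect to handle this by base change to $F_s$, reducing to the split case $L_s = F_s^n$ with $\sigma$ acting as a product of transpositions and fixed coordinates, where $\SSym(L_s,\sigma)$ is visibly the subalgebra of coordinatewise-$\sigma$-invariant tuples and the claim about restrictions of projections is immediate. One subtlety to flag: $\SSym(L,\sigma)$ is automatically \'etale (stated in the paragraph before the proposition, and clear after base change since a subalgebra cut out by an idempotent-compatible symmetry of $F_s^n$ is again a product of copies of $F_s$), so $\funcX(L_0)$ is a legitimate $\Gamma$-set and Proposition~\ref{prop:etex} applies. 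Everything else is a diagram chase through the anti-equivalence.
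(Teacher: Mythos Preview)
Your proposal is correct and follows essentially the same route as the paper: identify $\funcX(L_0)$ with the orbit set $\funcX(L)/\funcX(\sigma)$ via the anti-equivalence, then invoke Proposition~\ref{prop:etex} with $d=2$ and observe that the canonical map to the orbit set is a double covering exactly when $\funcX(\sigma)$ has no fixed point. The paper's proof is terser---it obtains $\funcX(L_0)=\funcX(L)/\funcX(\sigma)$ directly from the anti-equivalence (as $L_0$ is the equalizer of $\sigma$ and $\id$, so $\funcX(L_0)$ is the coequalizer) rather than by your base-change computation, but the argument is otherwise the same.
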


\begin{proof}
  By the anti-equivalence~\eqref{eq:anteq} we have
  \[
  \funcX(L_0)=\funcX(L)/\funcX(\sigma).
  \]
  Therefore, (a) is equivalent by Proposition~\ref{prop:etex} to the
  condition that the canonical map
  $\funcX(L)/\funcX(\sigma)\leftarrow\funcX(L)$ is a
  $\Gamma$-covering of degree~$2$, hence also to~(b).
\end{proof}

When the equivalent conditions of Proposition~\ref{prop:invo1} hold,
the automorphism $\sigma$ is called an \emph{involution} of the \'etale
$F$-algebra $L$, whose dimension is then necessarily even. Thus, the
notions of quadratic \'etale extension and involution of \'etale
algebras are equivalent.

For $n\geq1$ we let
$\DCov_\Gamma^{n}$ denote the groupoid whose objects are thin
$\Gamma$-quadrics of dimension~$2(n-1)$, i.e., double coverings 
$Y_0 \xleftarrow{\pi} Y$ of $\Gamma$-sets with $|Y_0| =n$, and 
whose morphisms are isomorphisms of $\Gamma$-coverings.  Let
$\QEtex_F^{n}$ denote the groupoid of quadratic \'etale extensions  
$L/L_0$ of $F$-algebras with $\dim_FL_0 =n$.
\relax From~\eqref{eq:anteq} we
obtain an anti-equivalence of groupoids
\begin{equation}
\label{eq:QEtexQCov}
\QEtex^{n}_F\equiv\DCov^{n}_\Gamma.
\end{equation}
Isomorphism classes of these groupoids are in bijection with 
$H^1(\Gamma,\Sym_2^n\rtimes \Sym_n)$ (see \cite{knustignol}).
We denote by  $\funcY(L/L_0)$ or $\funcY(L,\sigma)$ the double
covering (i.e., the thin $2(n-1)$-dimensional quadric)
attached to the quadratic extension $L/L_0$:
\[
\funcY(L/L_0) = \funcX(L_0)\leftarrow\funcX(L),\qquad
\funcY(L,\sigma) = \funcX(L)/\funcX(\sigma)\leftarrow \funcX(L).
\]

The following observation on symmetric and alternating elements in an
\'etale algebra with involution will be used in the next subsection:

\begin{prop}
  \label{prop:symdetale}
  For any \'etale $F$-algebra of even dimension with
  involution $(L,\sigma)$, we have
  \[
  \sym(L,\sigma)=\symd(L,\sigma) \qquad\text{and}\qquad
  \Skew(L,\sigma)=\AAlt(L,\sigma). 
  \]
\end{prop}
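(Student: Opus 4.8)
The plan is to establish the two non-trivial inclusions $\sym(L,\sigma)\subseteq\symd(L,\sigma)$ and $\Skew(L,\sigma)\subseteq\AAlt(L,\sigma)$; the reverse inclusions $\symd\subseteq\sym$ and $\AAlt\subseteq\Skew$ hold for an arbitrary $F$-algebra with involution, since $\sigma\bigl(x+\sigma(x)\bigr)=x+\sigma(x)$ and $\sigma\bigl(x-\sigma(x)\bigr)=-\bigl(x-\sigma(x)\bigr)$ for every $x$.

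The first step is to reduce to the split case. Note that $\sym(L,\sigma)$ and $\AAlt(L,\sigma)$ are respectively the kernel and the image of the $F$-linear map $x\mapsto x-\sigma(x)$ on $L$, while $\Skew(L,\sigma)$ and $\symd(L,\sigma)$ are the kernel and the image of $x\mapsto x+\sigma(x)$. Since forming kernels and images of $F$-linear maps commutes with the faithfully flat scalar extension $L\mapsto L\otimes_FF_s$, under which $\sigma$ becomes $\sigma\otimes\id$, an inclusion between two of these $F$-subspaces of $L$ is an equality exactly when it becomes one after applying $-\otimes_FF_s$. We may therefore replace $(L,\sigma)$ by $(L\otimes_FF_s,\sigma\otimes\id)$ and assume $L=F_s^{2n}$ as an $F_s$-algebra, with $\sigma$ a permutation $\tau$ of the $2n$ coordinates (every algebra automorphism of $F_s^{2n}$ permutes the primitive idempotents); by the definition of an involution of an \'etale algebra (see Proposition~\ref{prop:invo1}), $\tau$ is an involution without fixed points.

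In this split situation, label the coordinates $a_1,b_1,\dots,a_n,b_n$ so that $\tau$ interchanges $a_i$ and $b_i$ for each $i$. Then $\sym(L,\sigma)$ consists of the tuples whose $a_i$- and $b_i$-entries agree for all $i$, and $\Skew(L,\sigma)$ of those whose $b_i$-entry equals minus the $a_i$-entry for all $i$; each subspace has dimension $n$. Given a tuple $g$ in the first (resp.\ second) of these subspaces, let $h$ be the tuple agreeing with $g$ in the $a_i$-coordinates and vanishing in the $b_i$-coordinates; a one-line check gives $h+\sigma(h)=g$ (resp.\ $h-\sigma(h)=g$), so $g\in\symd(L,\sigma)$ (resp.\ $g\in\AAlt(L,\sigma)$), which completes the proof.

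I do not anticipate a genuine obstacle: the computation in the split case is the whole content, and the reduction to it via base change is routine. The only point worth flagging is characteristic~$2$, where $\sym=\Skew$ and $\symd=\AAlt$, so that the two asserted equalities merge into a single one; the split computation above is uniform in the characteristic and handles this case with no extra work. One could instead avoid base change altogether, decomposing $(L,\sigma)$ as a finite product of blocks of the form $(K\times K,\text{exchange})$ and $(N,\iota)$ with $N/K$ a separable quadratic field extension and $\iota$ its non-trivial automorphism, and then invoking the surjectivity of the trace $N\to K$; but the reduction to the split case is the shorter route.
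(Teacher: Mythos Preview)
Your argument is correct. The base-change reduction is sound (kernels and images of $F$-linear maps are preserved under the faithfully flat extension $F\hookrightarrow F_s$), and your explicit construction of a preimage in the split case works uniformly in all characteristics.

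The paper proceeds differently: it splits on the characteristic. When $\charac F\neq2$ the equalities are immediate, since $x=\tfrac{x}{2}+\sigma\bigl(\tfrac{x}{2}\bigr)$ for $x\in\sym(L,\sigma)$ and $y=\tfrac{y}{2}-\sigma\bigl(\tfrac{y}{2}\bigr)$ for $y\in\Skew(L,\sigma)$. When $\charac F=2$ the paper argues by a dimension count: the map $\ell\mapsto\ell+\sigma(\ell)$ has kernel $\sym(L,\sigma)$ and image $\symd(L,\sigma)$, and since $\dim_F\sym(L,\sigma)=\tfrac12\dim_FL$ (because $L$ is quadratic \'etale over $\sym(L,\sigma)$, by the very definition of an involution on an \'etale algebra), one gets $\dim\symd(L,\sigma)=\dim\sym(L,\sigma)$, forcing the inclusion $\symd\subseteq\sym$ to be an equality; the second equality then coincides with the first in characteristic~$2$. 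Your route has the virtue of being characteristic-free and constructive (you actually produce an $h$ with $h\pm\sigma(h)=g$), at the cost of invoking base change; the paper's route stays over $F$ and is a one-line dimension argument once the case $\charac F\neq2$ is disposed of, but it treats the two characteristics separately.
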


\begin{proof}
  The proposition is clear if $\charac F\neq2$, for then
  \[\textstyle
  x=\frac{x}{2}+\sigma_L\bigl(\frac{x}{2}\bigr) \quad\text{and}\quad
  y=\frac{y}{2}-\sigma_L\bigl(\frac{y}{2}\bigr) \quad\text{for
    $x\in\sym(L,\sigma)$ and $y\in\Skew(L,\sigma)$.}
  \]
  For the rest of the proof, we may thus assume $\charac F=2$. The
  linear map $L\to\symd(L,\sigma)$ that carries $\ell\in L$ to
  $\ell+\sigma(\ell)$ fits into the exact sequence
  \[
  0\to\sym(L,\sigma)\to L\to\symd(L,\sigma)\to 0.
  \]
  Since $\dim_F\sym(L,\sigma)=\frac12\dim_FL$, it follows that
  $\dim\symd(L,\sigma)=\dim\sym(L,\sigma)$. It follows that the
  obvious inclusion $\symd(L,\sigma)\subseteq\sym(L,\sigma)$ is an
  equality, which proves the first equation. The second equation is
  the same as the first since $\charac F=2$.
\end{proof}

\subsection{Embeddings of quadrics and of algebras with involution}

Let $A$ be a central simple $F$-algebra of even degree~$2n$, and let
$(\sigma,f)$ be a quadratic pair on $A$. If $(L,\sigma_L)$ is an
\'etale $F$-algebra of dimension $2n$ with involution, we
define an \emph{embedding of algebras with involution}
\[
\varepsilon\colon(L,\sigma_L)\hookrightarrow(A,\sigma,f)
\]
to be an $F$-algebra embedding $\varepsilon\colon L\hookrightarrow A$
satisfying the following conditions:
\begin{enumerate}
\item[(a)]
$\varepsilon\circ\sigma_L=\sigma\circ\varepsilon$, and
\item[(b)]
$f(x)=\Trd_A(\varepsilon(\ell)x)$ for all $x\in\sym(\sigma)$ and all
$\ell\in L$ such that $\ell+\sigma_L(\ell)=1$.
\end{enumerate}
Note that Proposition~\ref{prop:symdetale} shows that there exists
$\ell\in L$ such that $\ell+\sigma_L(\ell)=1$, since
$1\in\sym(\sigma_L)$. If $\ell$, $\ell'\in L$ satisfy
\[
\ell+\sigma_L(\ell)=\ell'+\sigma_L(\ell')=1,
\]
then $\ell-\ell'\in\Skew(\sigma_L)$, and
Proposition~\ref{prop:symdetale} yields
$\ell-\ell'\in\AAlt(\sigma_L)$. If~(a) holds, then
$\varepsilon(\ell)-\varepsilon(\ell')\in\AAlt(\sigma)$, hence, by
\cite[(2.3)]{KMRT}, 
\[
\Trd_A\bigl(x(\varepsilon(\ell)-\varepsilon(\ell'))\bigr)=0
\qquad\text{for all $x\in\sym(\sigma)$.}
\]
Therefore, when~(a) holds, condition~(b) is equivalent to
\begin{enumerate}
\item[(b')]
there exists $\ell\in L$ such that $\ell+\sigma_L(\ell)=1$ and
$f(x)=\Trd_A(\varepsilon(\ell)x)$ for all $x\in\sym(\sigma)$.
\end{enumerate}
It can be further weakened to
\begin{enumerate}
\item[(b'')]
there exists $\ell\in L$ such that $f(x)=\Trd_A(\varepsilon(\ell)x)$
for all $x\in\sym(\sigma_L)$,
\end{enumerate}
because this condition on $\ell$ implies
$\varepsilon(\ell)+\sigma\varepsilon(\ell)=1$, see~\cite[(5.7)]{KMRT}.
Note that condition~(b') (hence also~(b'')) holds with $\ell=\frac12$
if $\charac F\neq2$. 

\begin{example}
\label{ex:embedinvo}
Let $(L,\sigma_L)$ be an arbitrary \'etale $F$-algebra with involution
of the first kind. Let $L_0=\sym(\sigma_L)\subset L$ be the fixed
subalgebra under $\sigma_L$, let $T_0\colon L_0\to F$ be the trace,
and let $t\colon L\to F$ be the quadratic form given by 
$t(x)=T_0\big(\sigma(x)x\big)$ for $x\in L$. Let $(\sigma_t,f_t)$ denote the
adjoint quadratic pair on $\End_FL$. The regular representation
is an embedding
\[
(L,\sigma_L)\hookrightarrow(\End_FL,\sigma_t,f_t).
\]
Indeed, condition~(a) holds because the polar form of $t$ is
\[
b_t(x,y)=T_{L_0/F}(\sigma_L(x)y+\sigma_L(y)x)\qquad \text{for $x$,
  $y\in L$,}
\]
and we have
\[
b_t(xy,z)=b_t(y,\sigma_L(x)z) \qquad\text{for $x$, $y$, $z\in L$.}
\]
Condition~(b) also is easily checked, see \cite[(5.12)]{KMRT}.
\end{example}

\begin{prop}
  \label{prop:splitinvembed}
  Use the same notation as in Example~\ref{ex:embedinvo}. Suppose $L$
  is split and let $(A,\sigma,f)$ be a central simple $F$-algebra with
  quadratic pair. For any embedding
  \[
  \varepsilon\colon(L,\sigma_L)\hookrightarrow(A,\sigma,f)
  \] 
  there is
  an isomorphism $\varphi\colon(A,\sigma,f)\to(\End_FL,\sigma_t,f_t)$
  such 
  that $\varphi\circ\varepsilon\colon(L,\sigma_L)\hookrightarrow
  (\End_FL,\sigma_t,f_t)$ is the regular representation.
\end{prop}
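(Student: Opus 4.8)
The plan is to reduce the statement to Proposition~\ref{prop:splitembed} by forgetting the involution, and then to ``adjust'' the isomorphism produced there so that it also respects $(\sigma,f)$. First I would apply Proposition~\ref{prop:splitembed} to the underlying $F$-algebra embedding $\varepsilon\colon L\hookrightarrow A$: this yields an $F$-algebra isomorphism $\psi\colon A\to\End_FL$ with $\psi\circ\varepsilon$ equal to the regular representation $\rho\colon L\hookrightarrow\End_FL$. Transporting $(\sigma,f)$ along $\psi$ gives a quadratic pair $(\sigma',f')$ on $\End_FL$ for which $\psi$ is, tautologically, an isomorphism $(A,\sigma,f)\to(\End_FL,\sigma',f')$; moreover $\rho=\psi\circ\varepsilon$ is an embedding of $(L,\sigma_L)$ into $(\End_FL,\sigma',f')$, since conditions~(a) and~(b) are preserved under transport of structure. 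So the problem becomes: given two quadratic pairs $(\sigma',f')$ and $(\sigma_t,f_t)$ on $\End_FL$ for both of which the regular representation $\rho$ is an embedding of $(L,\sigma_L)$, exhibit an automorphism $\theta$ of $\End_FL$ fixing $\rho(L)$ pointwise and carrying $(\sigma',f')$ to $(\sigma_t,f_t)$; then $\varphi=\theta\circ\psi$ is the desired isomorphism.

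Next I would identify these quadratic pairs concretely. Since $L$ is split, $L\cong F^{2n}$ and $\rho(L)$ is the algebra of diagonal matrices, a maximal \'etale subalgebra of $\End_FL=M_{2n}(F)$. An orthogonal involution $\sigma'$ on $M_{2n}(F)$ restricting to $\sigma_L$ on the diagonal is the adjoint involution of a nonsingular symmetric (or, in characteristic~$2$, a nonsingular bilinear) form $b'$ on $L$ satisfying $b'(xy,z)=b'(y,\sigma_L(x)z)$ for $x,y,z\in L$; the compatibility with $\sigma_L$ forces $b'$ to be, up to a diagonal change of basis, the polar form $b_t$ of the trace form $t$ of Example~\ref{ex:embedinvo}. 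Concretely: writing $b'(e_i,e_j)$ in terms of the primitive idempotents $e_i$ of $L$ and using the intertwining relation together with $\varepsilon\sigma_L=\sigma'\varepsilon$, one sees that $b'$ pairs $e_i$ with $e_{i^{\sigma_L}}$ (and with nothing else), so $b'(x,y)=T_{L_0/F}(\lambda\sigma_L(x)y)$ for some unit $\lambda\in\sym(\sigma_L)$. The rescaling automorphism $\theta(g)=\mu g\mu^{-1}$ for a suitable square root-free choice of $\mu\in\rho(L)$ (more precisely, multiplication by a diagonal unit) conjugates $b'$ to $b_t$ while fixing the diagonal algebra pointwise, so it carries $\sigma'$ to $\sigma_t$.

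It then remains to match the linear forms $f'$ and $f_t$ on $\sym(\sigma_t)$ once $\sigma'$ has been conjugated to $\sigma_t$. Here I would invoke the fact, already used in the text, that a quadratic pair with underlying involution $\sigma_t$ is determined by the choice of an element $\ell\in\End_FL$ with $\ell+\sigma_t(\ell)=1$ via $f(x)=\Trd(\ell x)$, two such $\ell$ differing by an element of $\AAlt(\sigma_t)$; see \cite[(5.7), (5.11)]{KMRT}. Since $\rho$ is an embedding of $(L,\sigma_L)$ into both $(\End_FL,\theta(\sigma'),\theta(f'))=(\End_FL,\sigma_t,\theta(f'))$ and $(\End_FL,\sigma_t,f_t)$, condition~(b$'$) shows that both $\theta(f')$ and $f_t$ are given by $x\mapsto\Trd(\rho(\ell_0)x)$ for one and the same $\ell_0\in L$ with $\ell_0+\sigma_L(\ell_0)=1$; hence $\theta(f')=f_t$ already, with no further adjustment needed. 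Thus $\varphi=\theta\circ\psi$ works. The main obstacle I anticipate is the bookkeeping in characteristic~$2$: there one cannot take $\ell=\frac12$, the form $b'$ is merely bilinear rather than symmetric in the classical sense, and one must be careful that the diagonal conjugation $\theta$ genuinely exists over $F$ (not just over $F_s$) — this is where Proposition~\ref{prop:symdetale} and the explicit description of $b_t$ in Example~\ref{ex:embedinvo} do the real work.
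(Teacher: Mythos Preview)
Your argument is correct and follows the same overall strategy as the paper: apply Proposition~\ref{prop:splitembed} to get an isomorphism $A\cong\End_FL$ carrying $\varepsilon$ to the regular representation, then adjust by an inner automorphism coming from $\rho(L)^\times$ so that the quadratic pair matches $(\sigma_t,f_t)$. The organisation differs, however. The paper works on the quadratic-form side: it picks a form $q$ on $V$ adjoint to $(\sigma,f)$, uses condition~(a) to see that the $1$-dimensional eigenspaces $V_i,V'_i$ pair off into hyperbolic planes, and then uses condition~(b) in an explicit trace computation to show that suitably normalised basis vectors $v_i,v'_i$ are \emph{isotropic}, so that $v_i\mapsto e_i$, $v'_i\mapsto e'_i$ is an isometry $(V,q)\to(L,t)$. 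You instead separate the two adjustments: first a diagonal conjugation forces $\sigma'=\sigma_t$, and then you observe that the linear forms $\theta(f')$ and $f_t$ are \emph{both} given by $x\mapsto\Trd(\rho(\ell_0)x)$ for the same $\ell_0\in L$, hence coincide automatically. This second step is a pleasant shortcut; it replaces the paper's isotropy calculation by the uniqueness remark following condition~(b$'$).

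Two small points. First, your formula $b'(x,y)=T_{L_0/F}(\lambda\,\sigma_L(x)y)$ is not well typed, since $\lambda\,\sigma_L(x)y\in L$, not $L_0$; what you want is $b'(x,y)=T_{L/F}(\lambda\,\sigma_L(x)y)$ with $\lambda\in L_0^\times$, which indeed equals $b_t(\lambda x,y)$. Second, your worry about square roots is unnecessary: since $L$ is split, the norm $N_{L/L_0}\colon L^\times\to L_0^\times$ is simply $(a_i,b_i)\mapsto(a_ib_i)$ and is surjective, so a diagonal $\mu$ with $N_{L/L_0}(\mu)=\lambda$ always exists over~$F$ without any extraction of roots.
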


\begin{proof}
  Let $e_1$, \ldots, $e_n$, $e'_1$, \ldots, $e'_n$ be the collection
  of primitive idempotents of $L$, labelled in such a way that
  $e'_i=\sigma_L(e_i)$ for $i\in\n$. As observed in the proof of
  Proposition~\ref{prop:splitembed}, we may identify $A=\End_FV$ and
  we have a decomposition into $1$-dimensional subspaces
  \[
  V=V_1\oplus\cdots\oplus V_n\oplus V'_1\oplus\cdots\oplus V'_n
  \]
  such that $\varepsilon(e_i)$ (resp.\ $\varepsilon(e'_i)$) is the
  projection $V\to V_i$ (resp.\ $V\to V'_i$). Let $q\colon V\to F$ be
  a quadratic form such that $(\sigma,f)$ is adjoint to $q$, and let
  $b$ be the polar form of $q$. Since $\sigma$ is adjoint to $b$ and
  $\sigma\varepsilon(e_i)=\varepsilon\sigma_L(e_i)=\varepsilon(e'_i)$,
  we have for all $i$, $j\in \n$
  \[
  b(V_i,V_j)=b(\varepsilon(e_i)(V_i),V_j) =
  b(V_i,\varepsilon(e'_i)(V_j))=\{0\}.
  \]
  Similarly, $b(V'_i,V'_j)=\{0\}$ for all $i$, $j\in\n$, and
  $b(V_i,V'_j)=\{0\}$ if $i\neq j$. Since $b$ is nonsingular, it
  follows that $b(V_i,V'_i)=F$ for all $i\in\n$, hence we may find
  $v_i\in V_i$, $v'_i\in V'_i$ such that
  \[
  b(v_i,v'_i)=1.
  \]
  Let $S\in\End_FV$ be the map such that $S(v_i)=v'_i$ and
  $S(v'_i)=v_i$ for all $i\in\n$. It is readily verified that for
  $i\in\n$
  \[
  S\circ\varepsilon(e'_i)\colon x\mapsto v_ib(v_i,x)
  \quad\text{and}\quad
  S\circ\varepsilon(e_i)\colon x\mapsto v'_ib(v'_i,x)
  \quad\text{for all $x\in V$.}
  \]
  Therefore, by \cite[(5.11)]{KMRT},
  \[
  q(v_i)=f(S\circ\varepsilon(e'_i)) \qquad\text{and}\qquad
  q(v'_i)=f(S\circ\varepsilon(e_i)).
  \]
  If $\ell\in L$ is such that $f(x)=\Tr(\varepsilon(\ell)\circ x)$ for
  all $x\in\sym(\sigma)$, then
  \[
  f(S\circ\varepsilon(e'_i))=
  \Tr(\varepsilon(\ell)\circ S\circ\varepsilon(e'_i))=
  \Tr(S\circ\varepsilon(e'_i\ell)).
  \]
  None of the base vectors $v_1$, \ldots, $v'_n$ is mapped to a
  nonzero multiple of itself by $S\circ\varepsilon(e'_i\ell)$, hence
  $\Tr(S\circ\varepsilon(e'_i\ell))=0$ and therefore
  $q(v_i)=0$. Likewise, $q(v'_i)=0$ for $i\in\n$. Therefore, mapping
  $v_i\mapsto e_i$ and $v'_i\mapsto e'_i$ for $i\in\n$ defines an
  isometry $(V,q)\to(L,t)$, and the induced isomorphism of algebras
  with quadratic pairs $\varphi\colon(\End_FV,\sigma,f)\to
  (\End_FL,\sigma_t,f_t)$ is such that $\varphi\circ\varepsilon$ is
  the regular representation.
\end{proof}

We now turn to the geometric counterpart of the notion of embeddings
of algebras with involution. 
Let $Q\subset P$ be a $\Gamma$-quadric of dimension $2(n-1)$ in a
$\Gamma$-projective space of dimension~$2n-1$ over a field $F$. We let
$C(Q)$ denote the $\Gamma$-set of $(n-1)$-dimensional linear varieties
that lie in $Q$, and call this $\Gamma$-set the \emph{Clifford
  $\Gamma$-set of $Q$}. If $(Y,\sigma)$ is a thin $\Gamma$-quadric of
dimension~$2(n-1)$, we define an embedding of quadrics
\[
\varepsilon\colon(Y,\sigma)\hookrightarrow Q
\]
as a general embedding $\varepsilon\colon Y\hookrightarrow P$ that
carries isotropic linear subvarieties of $(Y,\sigma)$ to isotropic
linear subvarieties of $P$, i.e., to linear subvarieties of $Q$. In
particular, the image of $Y$ lies on $Q$. Since
every isotropic linear subvariety is contained in a maximal isotropic
linear subvariety, it suffices to require
that the induced map $Gr_{n-1}\varepsilon\colon
Gr_{n-1}Y\hookrightarrow Gr_{n-1}(P)$ 
carry the set of maximal isotropic linear subvarieties
$C(Y,\sigma)$ to $C(Q)$. We let $C(\varepsilon)$ denote the
$\Gamma$-equivariant map induced by $Gr_{n-1}\varepsilon$,
\[
C(\varepsilon)\colon C(Y,\sigma)\hookrightarrow C(Q).
\]

Let $(L,\sigma_L)$ be an arbitrary \'etale $F$-algebra of
dimension $2n$ with involution of the first kind, and let
\[
\varepsilon\colon (L,\sigma_L)\hookrightarrow (A,\sigma,f)
\]
be an embedding in a central simple $F$-algebra of degree~$2n$ with
quadratic pair. Since $\varepsilon\colon L\hookrightarrow A$ is an
$F$-algebra embedding, we may consider the general embedding of
Proposition~\ref{prop:embed}:
\[
\varepsilon_*\colon \funcX(L)\hookrightarrow \SB(A).
\]
Let $\funcY(L,\sigma_L)$ be the thin $\Gamma$-quadric 
associated with $(L,\sigma_L)$
and $Q(A,\sigma,f)$ the $\Gamma$-quadric associated with
$(A,\sigma,f)$.

\begin{prop}
  \label{prop:embedquad}
  The map $\varepsilon_*$ defines an embedding of quadrics
  \[
  \varepsilon_*\colon \funcY(L,\sigma_L) \hookrightarrow
  \funcQ(\sigma,f).
  \]
\end{prop}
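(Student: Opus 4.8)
The plan is to reduce the assertion to an explicit computation over $F_s$, after normalising the embedding by means of Proposition~\ref{prop:splitinvembed}. By Proposition~\ref{prop:embed} (applied with $2n$ in place of $n$) we already know that $\varepsilon_*\colon\funcX(L)\hookrightarrow\SB(A)$ is a $\Gamma$-equivariant general embedding, so by the very definition of an embedding of quadrics it suffices to show that the induced map $Gr_{n-1}\varepsilon_*$ carries the Clifford $\Gamma$-set $C\bigl(\funcY(L,\sigma_L)\bigr)$ into $C\bigl(\funcQ(\sigma,f)\bigr)$. A point of $C\bigl(\funcY(L,\sigma_L)\bigr)$ is a maximal isotropic linear subvariety $S$ of the thin quadric $\funcY(L,\sigma_L)$, that is, a section of the double covering $\funcX(L_0)\leftarrow\funcX(L)$ with $L_0=\sym(\sigma_L)$; what must be proved is that the right ideal $\sum_{\xi\in S}\varepsilon_*(\xi)\subseteq A_s$ is isotropic for $(\sigma_s,f_s)$. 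Since the elements of $C\bigl(\funcQ(\sigma,f)\bigr)$ are prescribed $F_s$-ideals and $Gr_{n-1}\varepsilon_*$ is $\Gamma$-equivariant, this is a purely set-theoretic statement over $F_s$, so one may base change to $F_s$ and use splitness freely.

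Over $F_s$ the algebra $L_s=L\otimes_FF_s$ is split, and $\varepsilon_s=\varepsilon\otimes\id_{F_s}$ is again an embedding of algebras with involution (conditions (a) and (b) survive scalar extension). Hence Proposition~\ref{prop:splitinvembed} applies and lets us identify $(A_s,\sigma_s,f_s)=(\End_{F_s}L_s,\sigma_t,f_t)$, with $t\colon L_s\to F_s$ the quadratic form of Example~\ref{ex:embedinvo}, in such a way that $\varepsilon_s$ is the regular representation; under this identification $\funcQ(\sigma,f)$ is the quadric $t=0$ in $\SB(A)=\mathbb P(L_s)$. Let $e_1,\dots,e_n,e_1',\dots,e_n'$ be the primitive idempotents of $L_s$, labelled so that $e_i'=\sigma_{L,s}(e_i)$; this is possible because $\funcX(\sigma_L)$ is fixed-point-free, so $\sigma_{L,s}$ fixes no primitive idempotent. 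For $x=\sum_i(a_ie_i+b_ie_i')$ one computes $\sigma_{L,s}(x)x=\sum_i a_ib_i(e_i+e_i')$, and since $e_1+e_1',\dots,e_n+e_n'$ are the primitive idempotents of the split algebra $L_{0,s}=\sym(\sigma_{L,s})$, applying the trace yields $t(x)=\sum_i a_ib_i$. Thus $t$ is hyperbolic with hyperbolic pairs $(e_i,e_i')$; in particular any subspace of $L_s$ spanned by a choice of one idempotent from each pair $\{e_i,e_i'\}$ is a maximal totally isotropic subspace for $t$.

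It then remains to match the combinatorics. By the computation in the proof of Proposition~\ref{prop:embed}, $\varepsilon_*$ sends the character $\xi\in\funcX(L_s)$ dual to a primitive idempotent $e$ to the right ideal $\Hom_{F_s}(L_s,eL_s)$, i.e.\ to the point $F_s e$ of $\mathbb P(L_s)$, while the induced involution on $\funcX(L)$ interchanges the characters dual to $e$ and to $\sigma_{L,s}(e)$. Hence a section $S$ of $\funcX(L_0)\leftarrow\funcX(L)$, read over $F_s$, amounts to a choice of one idempotent $g_i\in\{e_i,e_i'\}$ for each $i$, and $Gr_{n-1}\varepsilon_*$ carries it to the linear variety $\sum_i\Hom_{F_s}(L_s,g_iL_s)=\Hom_{F_s}(L_s,W)$ with $W=\bigoplus_i F_s g_i$. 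By the previous paragraph $W$ is totally isotropic for $t$, so $\Hom_{F_s}(L_s,W)$ is an isotropic ideal for $(\sigma_t,f_t)$ by the description of isotropic ideals recalled in Section~\ref{sec:involutions}; that is, $\sum_{\xi\in S}\varepsilon_*(\xi)$ is a point of $C\bigl(\funcQ(\sigma,f)\bigr)$, as required. (In passing, each $\varepsilon_*(\xi)=F_s e_\xi$ satisfies $t(e_\xi)=0$, so the image of $\funcX(L)$ already lies on the quadric.)

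I do not expect a serious obstacle here. The only delicate point is the reduction in the first paragraph: one must check carefully that verifying the condition on the $F_s$-points of the two Clifford sets is genuinely the whole statement, and that the normalisation furnished by Proposition~\ref{prop:splitinvembed}, available only over $F_s$, is compatible with the defining formula~\eqref{eq:epsstar} for $\varepsilon_*$; after that, everything is the hyperbolic-basis computation above. One should also keep in mind that the $f_t$-component of the isotropy condition is automatic for an ideal $\Hom_{F_s}(L_s,W)$ with $W$ totally isotropic, which is precisely what that description of isotropic ideals provides.
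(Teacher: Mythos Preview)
Your proof is correct, but it follows a different route from the paper's own argument. The paper never invokes Proposition~\ref{prop:splitinvembed} here; instead it works directly with the defining formula~\eqref{eq:epsstar} for $\varepsilon_*(\xi)$ and verifies the two isotropy conditions intrinsically. First, for $\xi,\eta\in\funcX(L)$ with $\eta\neq\xi\circ\sigma_L$, a short manipulation using $\sigma\circ\varepsilon=\varepsilon\circ\sigma_L$ shows $\sigma_s\bigl(\varepsilon_*(\xi)\bigr)\cdot\varepsilon_*(\eta)=\{0\}$, which immediately yields $\sigma_s(I_\omega)\cdot I_\omega=\{0\}$ for $I_\omega=\bigoplus_{\xi\in\omega}\varepsilon_*(\xi)$ and any section $\omega$. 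Second, the $f_s$-condition is checked by taking $\ell$ with $\ell+\sigma_L(\ell)=1$, writing $f_s(x)=\Trd\bigl((\varepsilon(\ell)\otimes1)\cdot x\bigr)$ for $x\in I_\omega\cap\sym(\sigma_s)$, and observing that $(\varepsilon(\ell)\otimes1)\cdot x$ squares to zero because $x\cdot(\varepsilon(\ell)\otimes1)\cdot x\in\sigma_s(I_\omega)\cdot I_\omega=\{0\}$.

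Your approach trades that algebraic argument for the normalisation of Proposition~\ref{prop:splitinvembed}: once $(A_s,\sigma_s,f_s)$ is identified with $(\End_{F_s}L_s,\sigma_t,f_t)$ and $\varepsilon_s$ with the regular representation, everything becomes a transparent hyperbolic-basis computation, and the $f$-condition is absorbed into the standard description of isotropic ideals. The paper's argument is coordinate-free and shows precisely how condition~(b) of the embedding forces the $f$-isotropy; yours is more explicit, makes the underlying hyperbolic geometry visible, and cleanly reuses the work already invested in Proposition~\ref{prop:splitinvembed}. Both are entirely valid; your only caveats (compatibility of the normalisation with~\eqref{eq:epsstar}, and the automatic $f_t$-isotropy of $\Hom_{F_s}(L_s,W)$ for $W$ totally isotropic) are genuine but routine, as you note.
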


\begin{proof}
  As usual, let $A_s=A\otimes_FF_s$, and let $(\sigma_s,f_s)$ be the
  quadratic pair on $A_s$ extending $(\sigma,f)$.
  We have to show that for every $\omega\in
  C\bigl(\funcY(L,\sigma_L)\bigr)$ we have
  $Gr_{n-1}\varepsilon_*(\omega)\in C\bigl(Q(\sigma,f)\bigr)$.
  Let $\xi$, $\eta\in \funcX(L)$. For $x\in\varepsilon_*(\xi)$,
  $y\in\varepsilon_*(\eta)$ and $\ell\in L$ we have
  \[
  \bigl(1\otimes\xi(\ell)\bigr)\cdot x =
  (\varepsilon(\ell)\otimes1)\cdot x \quad\text{and}\quad
  \bigl(1\otimes\eta(\ell)\bigr)\cdot
  y=(\varepsilon(\ell)\otimes1)\cdot y,
  \]
  hence also
  \[
  \sigma_s(x)\cdot(\varepsilon(\ell)\otimes1) = \sigma_s(x)\cdot
  \bigl(1\otimes\xi\sigma_L(\ell)\bigr),
  \]
  and
  \[
  \sigma_s(x)\cdot y\cdot\bigl(1\otimes\eta(\ell)\bigr) =
  \sigma_s(x)\cdot(\varepsilon(\ell)\otimes1)\cdot y =
  \sigma_s(x)\cdot y\cdot\bigl(1\otimes\xi\sigma_L(\ell)\bigr).
  \]
  If $\eta\neq\xi\sigma_L$, we can find $\ell\in L$ such that
  $\eta(\ell)\neq\xi\sigma_L(\ell)$, hence the last equation shows
  that $\sigma_s(x)\cdot y=0$. Therefore, 
  \[
  \sigma_s\bigl(\varepsilon_*(\xi)\bigr)\cdot
  \varepsilon_*(\eta)=\{0\}, \qquad\text{unless $\eta=\xi\sigma_L$.}
  \]
  In particular, for every $\omega\in C\bigl(\funcY(L,\sigma_L)\bigr)$,
  \begin{equation}
    \label{eq:isot}
    \sigma_s\Bigl(\bigoplus_{\xi\in\omega}\varepsilon_*(\xi)\Bigr)
    \cdot \bigoplus_{\xi\in\omega}\varepsilon_*(\xi) = \{0\},\quad
    \text{i.e.,}\quad
    \sigma_s\bigl(Gr_{n-1}\varepsilon_*(\omega)\bigr)\cdot
    Gr_{n-1}\varepsilon_*(\omega)=\{0\}.  
  \end{equation}

  Now, let $\ell\in L$ be such that
  $f(x)=\Trd(\varepsilon(\ell)x)$ for all $x\in\sym(\sigma)$. For
  $x\in Gr_{n-1}\varepsilon_*(\omega)\cap\sym(\sigma_s)$ we have
  \begin{equation}
    \label{eq:f}
    f_s(x)=\Tr\bigl((\varepsilon(\ell)\otimes1)\cdot x\bigr)
    \quad\text{since $x\in\sym(\sigma_s)$,}
  \end{equation}
  and
  \[
  (\varepsilon(\ell)\otimes1)\cdot x\in
  Gr_{n-1}\varepsilon_*(\omega) \quad\text{since
    $x\in Gr_{n-1}\varepsilon_*(\omega)$.}
  \]
  On the other hand, we have $x=\sigma_s(x)\in
  \sigma_s\bigl(Gr_{n-1}\varepsilon_*(\omega)\bigr)$, hence by
  \eqref{eq:isot},
  \[
  x\cdot\big(\varepsilon(\ell)\otimes1\big)\cdot x = 0.
  \]
  It follows that $\bigl((\varepsilon(\ell)\otimes1)\cdot
  x\bigr)^2=0$, hence $\Tr\bigl((\varepsilon(\ell)\otimes1)\cdot
  x\bigr)=0$ and \eqref{eq:f} shows that $f_s(x)=0$. Thus,
  $Gr_{n-1}\varepsilon_*(\omega)$ is isotropic for all $\omega\in
  C\bigl(\funcY(L,\sigma_L)\bigr)$, and the proposition is proved.
\end{proof}

Conversely, every embedding
\[
\varepsilon\colon(Y,\varphi)\hookrightarrow Q(\sigma,f)
\]
of a $\Gamma$-set $Y$ of $2n$ elements with an involution $\varphi$ of the first kind in the quadric associated with a quadratic pair on a central simple algebra $A$ of degree~$2n$ arises from the embedding of the \'etale $F$-algebra with involution $\funcM(Y,\varphi)$ into $(A,\sigma,f)$, as we now show.

Let $\varepsilon(Y)=\{I_1,\ldots, I_n, I'_1, \ldots, I'_n\}$, where the isotropic $2n$-dimensional right ideals $I_1$, \ldots, $I'_n$ of $A_s$ are indexed in such a way that $I'_i=\varepsilon(\xi^\varphi)$ if $\varepsilon(\xi)=I_i$. As in \S\ref{sec:SB}, let
\[
L_s=\{x\in A_s\mid x\cdot I_i\subset I_i\text{ and } x\cdot I'_i\subset I'_i\text{ for all $i\in\n$}\}
\]
and $L=L_s^\Gamma$. Proposition~\ref{prop:embedconv} shows that we may identify $X$ with $\funcX(L)$ in such a way that $\varepsilon$ is the general embedding $\funcX(L)\hookrightarrow\SB(A)$ induced by the inclusion $L\hookrightarrow A$. Let $\sigma_L$ be the involution on $L$ such that $\funcX(\sigma_L)=\varphi$.

\begin{prop}
  \label{prop:embedconvquad}
  The inclusion $L\hookrightarrow A$ is an embedding
  $(L,\sigma_L)\hookrightarrow(A,\sigma,f)$. 
\end{prop}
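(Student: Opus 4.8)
The plan is to extend scalars to $F_s$, identify the resulting embedding with the regular representation treated in Example~\ref{ex:embedinvo}, and then descend to $F$. Write $A_s=\End_{F_s}(V)$ for a $2n$-dimensional $F_s$-vector space $V$, and let $(\sigma_s,f_s)$ be the quadratic pair on $A_s$ extending $(\sigma,f)$; it is adjoint to a hyperbolic quadratic form $q$ on $V$, which we fix once and for all, with polar form $b$. As arranged just before the statement, $L_s=L\otimes_FF_s$ is split \'etale of dimension $2n$, and $\varepsilon$ is identified with the general embedding $\varepsilon_*\colon\funcX(L)\hookrightarrow\SB(A)$ induced by the inclusion; write $\varepsilon_s=\varepsilon\otimes\id_{F_s}\colon L_s\hookrightarrow A_s$. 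Put $\funcX(L)=\{\xi_1,\dots,\xi_n,\xi'_1,\dots,\xi'_n\}$ with $\varepsilon_*(\xi_i)=I_i$, $\varepsilon_*(\xi'_i)=I'_i$; let $e_i$, $e'_i\in L_s$ be the primitive idempotents with $\xi_i(e_i)=1$, $\xi'_i(e'_i)=1$ and all other evaluations zero, and set $V_i=e_iV$, $V'_i=e'_iV$. Then $V=\bigoplus_i(V_i\oplus V'_i)$ with all summands $1$-dimensional, $I_i=\Hom_{F_s}(V,V_i)$, $I'_i=\Hom_{F_s}(V,V'_i)$, and, since $\varphi=\funcX(\sigma_L)$ interchanges $\xi_i$ and $\xi'_i$, the involution $\sigma_{L_s}=\sigma_L\otimes\id_{F_s}$ interchanges $e_i$ and $e'_i$.

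I would next read off $q$ from the hypothesis that $\varepsilon$ is an embedding of quadrics. The maximal isotropic linear subvarieties of $(Y,\varphi)$ are the sections of the double covering, i.e., the $n$-element subsets $\omega\subset\funcX(L)$ meeting each pair $\{\xi_i,\xi'_i\}$ exactly once; the hypothesis says $Gr_{n-1}\varepsilon$ carries each such $\omega$ to a point of $C\bigl(\funcQ(\sigma,f)\bigr)$, equivalently that $\bigoplus_{\xi\in\omega}V_\xi$ is totally isotropic for $q$. Since any two elements of $\funcX(L)$ lying in different pairs occur together in some section, letting $\omega$ vary gives $q(V_i)=q(V'_i)=0$, $b(V_i,V_j)=b(V'_i,V'_j)=0$ for all $i,j$, and $b(V_i,V'_j)=0$ for $i\neq j$; nondegeneracy of $b$ then forces $b(V_i,V'_i)\neq0$, so after choosing $v_i\in V_i$, $v'_i\in V'_i$ with $b(v_i,v'_i)=1$ we obtain $q\bigl(\sum_ia_iv_i+\sum_ic_iv'_i\bigr)=\sum_ia_ic_i$.

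Put $L_0=\sym(L_s,\sigma_{L_s})$, let $T_0\colon L_0\to F_s$ be the trace, and let $t(x)=T_0\bigl(\sigma_{L_s}(x)x\bigr)$ be the quadratic form on $L_s$ from Example~\ref{ex:embedinvo}. A routine computation — using $q(v_i)=q(v'_i)=0$, $b(v_i,v'_j)=\delta_{ij}$, $b(v_i,v_j)=0$, and $T_0(e_i+e'_i)=1$ — shows that the $L_s$-module isomorphism $V\to L_s$ sending $v_i\mapsto e_i$ and $v'_i\mapsto e'_i$ is an isometry $(V,q)\iso(L_s,t)$. It therefore induces an isomorphism of algebras with quadratic pair $(A_s,\sigma_s,f_s)\iso(\End_{F_s}L_s,\sigma_t,f_t)$ under which the inclusion $L_s\hookrightarrow A_s$ becomes the regular representation of $L_s$. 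By Example~\ref{ex:embedinvo} the regular representation is an embedding $(L_s,\sigma_{L_s})\hookrightarrow(\End_{F_s}L_s,\sigma_t,f_t)$ of algebras with involution, and hence so is $\varepsilon_s\colon(L_s,\sigma_{L_s})\hookrightarrow(A_s,\sigma_s,f_s)$. In particular $\sigma_s\circ\varepsilon_s=\varepsilon_s\circ\sigma_{L_s}$, and, by the equivalence of conditions (b), (b'), (b'') recalled before Example~\ref{ex:embedinvo}, for \emph{every} $\ell\in L_s$ with $\ell+\sigma_{L_s}(\ell)=1$ one has $f_s(x)=\Trd_{A_s}\bigl(\varepsilon_s(\ell)x\bigr)$ for all $x\in\sym(\sigma_s)$.

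It remains to descend to $F$. Restricting $\sigma_s\circ\varepsilon_s=\varepsilon_s\circ\sigma_{L_s}$ to $\Gamma$-invariants gives $\sigma\circ\varepsilon=\varepsilon\circ\sigma_L$, which is condition~(a). For condition~(b), the obvious idempotent witness $e_1+\dots+e_n\in L_s$ need not be $\Gamma$-invariant, so instead I would invoke Proposition~\ref{prop:symdetale}: since $1\in\sym(L,\sigma_L)=\symd(L,\sigma_L)$, there is $\ell\in L$ with $\ell+\sigma_L(\ell)=1$. Viewing $\ell$ as an element of $L_s$ we have $\ell+\sigma_{L_s}(\ell)=1$, so the previous paragraph gives $f_s(x)=\Trd_{A_s}\bigl(\varepsilon_s(\ell)x\bigr)$ for all $x\in\sym(\sigma_s)$; restricting to $\Gamma$-invariants yields $f(x)=\Trd_A\bigl(\varepsilon(\ell)x\bigr)$ for all $x\in\sym(\sigma)$, which is condition~(b') and hence condition~(b). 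The one genuinely delicate point is this $\Gamma$-descent of the witness: Proposition~\ref{prop:symdetale} is exactly what allows the non-invariant idempotent to be traded for an invariant element with the same effect on $\sym(\sigma_s)$; pinning down $q$ in the second paragraph is routine bookkeeping with the Clifford set.
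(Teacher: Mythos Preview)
Your proof is correct, but it takes a different route from the paper's. The paper stays entirely in the ideal/idempotent language: it shows $\sigma_s(e_i)=e'_i$ by manipulating products like $\sigma_s(e_i)\cdot e_j$ using only the isotropy relations $\sigma_s(I_i)\cdot I_j=\{0\}$, and then verifies condition~(b) over $F_s$ for the single witness $\ell=\sum_i e_i$ via the decomposition $x=\ell x(1-\ell)+(1-\ell)x\ell+\bigl(\ell x\ell+(1-\ell)x(1-\ell)\bigr)$, treating the three pieces using the isotropy of $I=\ell A_s$, of $I'=(1-\ell)A_s$, and the $\symd$ identity respectively. You instead pin down the quadratic form $q$ explicitly from the Clifford-set hypothesis, construct an isometry $(V,q)\iso(L_s,t)$, and thereby identify $\varepsilon_s$ with the regular representation of Example~\ref{ex:embedinvo}; this is essentially running the proof of Proposition~\ref{prop:splitinvembed} in reverse, with the geometric input replacing the algebraic one. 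Your approach is conceptually clean---it reduces everything to the model case---at the cost of choosing bases and a specific $q$; the paper's argument is more self-contained and never leaves the algebra, which makes it slightly more portable (no appeal back to Example~\ref{ex:embedinvo}). Both proofs descend to $F$ the same way, via the equivalence of (b) and (b') together with Proposition~\ref{prop:symdetale} to obtain a $\Gamma$-invariant witness $\ell$.
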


\begin{proof}
  The primitive idempotents $e_1$, \ldots, $e_n$, $e'_1$, \ldots,
  $e'_n$ of $L_s$ are in one-to-one correspondence with $I_1$, \ldots,
  $I_n$, $I'_1$, \ldots, $I'_n$ in such a way that $I_i=e_i\cdot A_s$
  and $I'_i=e'_i\cdot A_s$ for $i\in\n$. The indexing of $I_1$,
  \ldots, $I'_n$ is such that $\sigma_{L_s}$ interchanges $e_i$ and
  $e'_i$ for $i\in\n$. To check that $\sigma$ restricts to $\sigma_L$,
  observe that $I'_i$ is the only element among $\{I_1, \ldots,
  I'_n\}$ that does not belong to any $\omega^{C(\varepsilon)}$ for any
  $\omega\in C(X,*)$ containing $i$. Therefore,
  \[
  \sigma_s(I_i)\cdot I_j = \sigma_s(I_i)\cdot I'_k=\{0\}
  \qquad\text{for all $j$, $k\in\n$ with $k\neq i$,}
  \]
  hence for all $i\in\n$
  \[
  \sigma_s(e_i)\cdot e_j=\sigma_s(e_i)\cdot e'_k=0 \qquad\text{for all
    $j$, $k\in\n$ with $k\neq i$.}
  \]
  Similarly, for all $i\in\n$,
  \[
  \sigma_s(e'_i)\cdot e_j=\sigma_s(e'_i)\cdot e'_k=0 \qquad\text{for
    all $j$, $k\in\n$ with $j\neq i$.}
  \]
  Since $e_1+\cdots+e_n+e'_1+\cdots+e'_n=1$, it follows that for all
  $i\in\n$ we have
  \[
  e'_i=\Bigl(\sum_{j\in\n} \sigma_s(e_j)+\sigma_s(e'_j)\Bigr)\cdot
  e'_i = \sigma_s(e_i)\cdot e'_i
  \]
  and, on the other hand,
  \[
  \sigma_s(e_i)\cdot(1-e'_i)= \sigma_s(e_i) \cdot\Bigl(\sum_{j\in\n}
  e_j + \sum_{k\in\n\smallsetminus\{i\}} e'_k\Bigr) =0,
  \]
  hence $\sigma_s(e_i)=\sigma_s(e_i)\cdot e'_i$. Therefore,
  $\sigma_s(e_i)=e'_i$ for $i\in\n$.

  To complete the proof that $(L,\sigma_L)\hookrightarrow
  (A,\sigma,f)$, it remains to show that for all $x\in\sym(\sigma)$
  and all $\ell\in L$ such that $\ell+\sigma_L(\ell)=1$ we have
  $f(x)=\Trd_{A}(\ell x)$. To show that this equation holds for all
  $x\in\sym(\sigma_s)$ and all $\ell\in L_s$ such that
  $\ell+\sigma_s(\ell)=1$, it actually suffices to prove it for a
  single $\ell$, as was observed in the equivalence of conditions~(b)
  and (b') defining an embedding. Let $\ell=\sum_{i\in\n}e_i$. Since
  $\ell+\sigma_s(\ell)=1$ we have for all $y\in A_s$
  \[
  \Trd_{A_s}\bigl(\ell\cdot(y+\sigma_s(y))\bigr) =
  \Trd_{A_s}\bigl((\ell+\sigma_s(\ell))\cdot y\bigr) = \Trd_{A_s}(y).
  \]
  Since $f_s\bigl(y+\sigma_s(y)\bigr)=\Trd_{A_s}(y)$, we have
  \begin{equation}
    \label{eq:symd}
    f_s(x)=\Trd_{A_s}(\ell x) \qquad\text{for all
      $x\in\symd(\sigma_s)$.} 
  \end{equation}
  Let $I=\ell\cdot A_s= I_1+\cdots+I_n$ and $I'=(1-\ell)\cdot A_s =
  I'_1+\cdots+I'_n$. These right ideals are isotropic since
  $\{I_1,\ldots, I_n\}$, $\{I'_1,\ldots, I'_n\}\in
  C(\varepsilon(X,*))$. For $x\in I\cap\sym(\sigma_s)$ we have
  $f_s(x)=0$ and, on the other hand, $\ell x\in I$ and $x\ell x\in
  \sigma_s(I)\cdot I=\{0\}$, hence $(\ell x)^2=0$. Therefore,
  \begin{equation}
    \label{eq:I}
    f_s(x)=\Trd_{A_s}(\ell x) \qquad\text{for all
      $x\in I\cap\sym(\sigma_s)$.}
  \end{equation}
  Similarly, for $x\in I'\cap\sym(\sigma_s)$ we have $\ell x=0$, hence
  \begin{equation}
    \label{eq:I1}
    f_s(x)=\Trd_{A_s}(\ell x) \qquad\text{for all
      $x\in I'\cap\sym(\sigma_s)$.}
  \end{equation}
  Now, for $x\in\sym(\sigma_s)$ we have
  \[
  x=\ell x(1-\ell) + (1-\ell)x\ell + \ell x\ell + (1-\ell)x(1-\ell).
  \]
  Since $\ell\in I$, $1-\ell\in I'$, and $\sigma_s(\ell)=1-\ell$, we
  have
  \[
  \ell x(1-\ell)\in I\cap\sym(\sigma_s),\qquad (1-\ell)x\ell \in
  I'\cap\sym(\sigma_s),
  \]
  and
  \[
  \ell x\ell +(1-\ell)x(1-\ell) = \ell x \ell +\sigma_s(\ell x\ell)
  \in\symd(\sigma_s).
  \]
  Therefore, 
  \[
  f_s(x)=f_s\bigl(\ell x(1-\ell)\bigr) + f_s\bigl((1-\ell)x\ell\bigr)
  + f_s\bigl(\ell x\ell + (1-\ell)x(1-\ell)\bigr),
  \]
  and it follows from~\eqref{eq:symd}, \eqref{eq:I}, and \eqref{eq:I1}
  that
  \[
  f_s(x)=\Trd_{A_s}(\ell x).
  \]
\end{proof}

\section{Clifford algebras and $\Gamma$-sets}
\label{sec:Clifford}

As in the previous sections, we let $F$ denote an arbitrary field and
$\Gamma$ the Galois group of a separable closure $F_s$ of $F$.

\subsection{Clifford constructions}
Recall from \S\ref{sec:involutions} that for any $2(n-1)$-dimensional thin
$\Gamma$-quadric $Y/Y_0$ we let $C(Y/Y_0)$ denote the $\Gamma$-set of
maximal linear isotropic subvarieties, which can be identified with
the $2^n$-element set of sections of the covering $Y_0\leftarrow
Y$. We call $C(Y/Y_0)$ the \emph{Clifford $\Gamma$-set} of
$Y/Y_0$. The canonical involution $\sigma$ of $Y/Y_0$ induces an
involution $\underline{\sigma}$ on $C(Y/Y_0)$, which maps every
section to its complement in $Y$. If $n=1$, then $C(Y/Y_0)=Y$ and
$\underline{\sigma}=\sigma$. The following proposition is clear:

\begin{prop}\label{prop:isounion}
If $Y/Y_0$ and $Z/Z_0$ are thin $\Gamma$-quadrics, then $(Y\sqcup
Z)/(Y_0\sqcup Z_0)$ is a thin $\Gamma$-quadric and there is a
canonical isomorphism 
\[
C(Y/Y_0) \times C(Z/Z_0) \iso C(Y \sqcup Z
/Y_0\sqcup Z_0)
\]
which maps any pair of isotropic linear varieties $(\omega,\omega')$
to $\omega\sqcup\omega'$.
\end{prop}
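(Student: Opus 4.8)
The plan is to verify everything directly from the definition of a thin $\Gamma$-quadric as a double covering of $\Gamma$-sets and of the Clifford $\Gamma$-set as the $\Gamma$-set of sections of such a covering. Write $\pi\colon Y\to Y_0$ and $\rho\colon Z\to Z_0$ for the two double coverings.

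First I would check that $(Y\sqcup Z)/(Y_0\sqcup Z_0)$ is again a thin $\Gamma$-quadric. Since finite direct sums exist in $\Set_\Gamma$, the map $\pi\sqcup\rho\colon Y\sqcup Z\to Y_0\sqcup Z_0$ is a morphism of $\Gamma$-sets; its fiber over a point of $Y_0$ is a fiber of $\pi$ and its fiber over a point of $Z_0$ is a fiber of $\rho$, so every fiber has exactly two elements. Hence $\pi\sqcup\rho$ is a double $\Gamma$-covering whose base has cardinality $\lvert Y_0\rvert+\lvert Z_0\rvert$, i.e., a thin $\Gamma$-quadric.

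Next I would describe the map. A maximal isotropic linear subvariety of $(Y\sqcup Z)/(Y_0\sqcup Z_0)$ is by definition a section of $\pi\sqcup\rho$, that is, a subset $\Omega\subseteq Y\sqcup Z$ meeting each fiber of $\pi\sqcup\rho$ in exactly one point. Restricting $\Omega$ to $Y$ and to $Z$ yields $\omega=\Omega\cap Y$ and $\omega'=\Omega\cap Z$, which meet each fiber of $\pi$, resp.\ $\rho$, in exactly one point; thus $\omega\in C(Y/Y_0)$, $\omega'\in C(Z/Z_0)$, and $\Omega=\omega\sqcup\omega'$. Conversely, for $\omega\in C(Y/Y_0)$ and $\omega'\in C(Z/Z_0)$ the subset $\omega\sqcup\omega'$ meets each fiber of $\pi\sqcup\rho$ in exactly one point, hence is a section. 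This gives mutually inverse maps $(\omega,\omega')\mapsto\omega\sqcup\omega'$ and $\Omega\mapsto(\Omega\cap Y,\Omega\cap Z)$, so a canonical bijection $C(Y/Y_0)\times C(Z/Z_0)\iso C\bigl((Y\sqcup Z)/(Y_0\sqcup Z_0)\bigr)$.

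Finally I would check $\Gamma$-equivariance, the only place where the action enters, and it is immediate: the $\Gamma$-action on a Clifford $\Gamma$-set is the one induced on subsets by the action on the total space, and the $\Gamma$-action on $Y\sqcup Z$ preserves each of the $\Gamma$-stable subsets $Y$ and $Z$, so ${}^\gamma(\omega\sqcup\omega')={}^\gamma\omega\sqcup{}^\gamma\omega'$ and $(\Omega\cap Y,\Omega\cap Z)$ is carried by $\gamma$ to $({}^\gamma\Omega\cap Y,{}^\gamma\Omega\cap Z)$. Hence the displayed bijection is an isomorphism of $\Gamma$-sets. There is no genuine obstacle here — the content is entirely the bookkeeping observation that sections of $\pi\sqcup\rho$ correspond bijectively and $\Gamma$-equivariantly to pairs consisting of a section of $\pi$ and a section of $\rho$ — which is why the proposition is stated as clear.
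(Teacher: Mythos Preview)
Your proof is correct and is exactly the direct verification the paper has in mind: the paper gives no proof, stating only that the proposition is clear. You have simply unpacked that claim by checking that fibers of $\pi\sqcup\rho$ have two elements and that sections of $\pi\sqcup\rho$ correspond $\Gamma$-equivariantly to pairs of sections of $\pi$ and $\rho$.
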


We now turn to the analogue of this construction for quadratic \'etale
extensions. Let $\varepsilon\colon L_0\hookrightarrow L$ be a
quadratic \'etale extension of \'etale $F$-algebras, with
$\dim_FL_0=n$. In view of the Grothendieck correspondence there exists
an \'etale algebra $C(L/L_0)$ of dimension $2^n$, with involution
$\underline\sigma$, such that 
$C\funcY(L/L_0) = \funcY\big(C(L/L_0), \underline\sigma\big)$. 
This \'etale algebra is constructed explicitly in \cite{knustignol},
where it is designated by $\Omega(L/L_0)$. We briefly recall the
construction: the algebra $L_0^{\otimes n}$ contains an idempotent
$e_n^{L_0}$ that corresponds under the composition of canonical
isomorphisms
\[
L_0^{\otimes n} \iso \funcM\bigl(\funcX(L_0^{\otimes n})\bigr) \iso
\funcM(\funcX(L_0)^n)
\]
to the characteristic function $\chi$ of the complement of the ``fat
diagonal'' of $\funcX(L_0)^n$:
\[
\chi(\xi_1,\ldots,\xi_n)=
\begin{cases}
  1&\text{if $\xi_1$, \ldots, $\xi_n$ are all different;}\\
  0&\text{if $\xi_i=\xi_j$ for some $i$, $j$ with $i\neq j$.}
\end{cases}
\]
Then, letting $(L^{\otimes n})^{\Sym_n}$ denote the subalgebra of
$L^{\otimes n}$ fixed under the action of $\Sym_n$ by permutation of
the factors, we set
\[
 C(L/L_0)= \varepsilon^{\otimes n}(e_n^{L_0})\cdot(L^{\otimes
  n})^{\Sym_n}.
\]
We call the algebra $C(L/L_0)$ the \emph{Clifford algebra of
  $L/L_0$}. If $n=1$, then $C(L/L_0) = L$, and the canonical
involution $\underline{\sigma}$ is the canonical involution of $L/L_0$. 

\begin{prop}
\label{prop:prodetale}
If $L/L_0$ and $M/M_0$ are quadratic \'etale extensions,
there is a canonical isomorphism
\[
P\colon C( L \times M/ L_0 \times M_0) \ \iso \ C(
L/L_0) \otimes C(M/M_0),
\]
which corresponds under the Grothendieck correspondence to the
canonical isomorphism of Proposition~\ref{prop:isounion}.
\end{prop}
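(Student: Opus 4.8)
The plan is to deduce everything from the Grothendieck correspondence~\eqref{eq:anteq} together with Proposition~\ref{prop:isounion}; essentially no new computation is needed, and in particular the explicit description $C(L/L_0)=\varepsilon^{\otimes n}(e_n^{L_0})\cdot(L^{\otimes n})^{\Sym_n}$ can be avoided. Write $n=\dim_FL_0$ and $m=\dim_FM_0$, so that $\dim_F(L_0\times M_0)=n+m$ and the two algebras $C(L\times M/L_0\times M_0)$ and $C(L/L_0)\otimes C(M/M_0)$ both have dimension $2^{n+m}$, which is at least a consistency check.

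First I would translate the left-hand side to $\Gamma$-sets. Under~\eqref{eq:anteq} products of \'etale algebras correspond to disjoint unions of $\Gamma$-sets, so the canonical identifications $\funcX(L\times M)=\funcX(L)\sqcup\funcX(M)$ and $\funcX(L_0\times M_0)=\funcX(L_0)\sqcup\funcX(M_0)$ identify the double covering $\funcY(L\times M/L_0\times M_0)$ with the disjoint union of thin $\Gamma$-quadrics $\funcY(L/L_0)\sqcup\funcY(M/M_0)$ considered in Proposition~\ref{prop:isounion}. Applying that proposition then gives a canonical isomorphism of $\Gamma$-sets
\[
C\bigl(\funcY(L/L_0)\bigr)\times C\bigl(\funcY(M/M_0)\bigr)\ \iso\ C\bigl(\funcY(L\times M/L_0\times M_0)\bigr),\qquad (\omega,\omega')\mapsto\omega\sqcup\omega',
\]
and this isomorphism intertwines the involution $\underline\sigma\times\underline\sigma$ on the left with the canonical involution $\underline\sigma$ on the right, since the complement in $Y\sqcup Z$ of a section $\omega\sqcup\omega'$ is $\omega^{\underline\sigma}\sqcup\omega'^{\underline\sigma}$.

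Next I would translate back. Recalling that $C\funcY(N/N_0)=\funcY\bigl(C(N/N_0),\underline\sigma\bigr)$, whose underlying $\Gamma$-set is $\funcX\bigl(C(N/N_0)\bigr)$, the displayed isomorphism reads as an isomorphism $\funcX\bigl(C(L/L_0)\bigr)\times\funcX\bigl(C(M/M_0)\bigr)\iso\funcX\bigl(C(L\times M/L_0\times M_0)\bigr)$. Applying the contravariant functor $\funcM$ and using $\funcM(X\times X')=\funcM(X)\otimes\funcM(X')$ together with the Grothendieck isomorphism $\funcM\circ\funcX\cong\id$, the left-hand side becomes $C(L/L_0)\otimes C(M/M_0)$, the right-hand side becomes $C(L\times M/L_0\times M_0)$, and reversing the arrow yields the desired isomorphism $P$ (which, by the compatibility with $\underline\sigma$ noted above, also intertwines the canonical involutions). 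That $P$ corresponds under the Grothendieck correspondence to the isomorphism of Proposition~\ref{prop:isounion} is then built into the construction.

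The only real work is the bookkeeping in the first step: checking that the thin $\Gamma$-quadric attached to the product extension $L\times M/L_0\times M_0$ is literally the disjoint union $\funcY(L/L_0)\sqcup\funcY(M/M_0)$, so that Proposition~\ref{prop:isounion} applies, and then tracking the canonical involution $\underline\sigma$ coherently through the functors $C$, $\funcY$, $\funcX$, $\funcM$. One could instead argue directly from the formula $C(L/L_0)=\varepsilon^{\otimes n}(e_n^{L_0})\cdot(L^{\otimes n})^{\Sym_n}$, decomposing $(L\times M)^{\otimes(n+m)}$ according to how the $n+m$ tensor slots are distributed between $L$ and $M$ and matching the idempotent $e_{n+m}^{L_0\times M_0}$ with the $\Sym_{n+m}$-orbit of $e_n^{L_0}\otimes e_m^{M_0}$; this is distinctly more laborious, and is exactly what the Grothendieck correspondence lets us bypass.
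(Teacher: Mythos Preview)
Your proof is correct and is precisely the argument the paper has in mind: the paper does not give an explicit proof of this proposition, since the statement itself says that $P$ ``corresponds under the Grothendieck correspondence to the canonical isomorphism of Proposition~\ref{prop:isounion}'', and the definition of $C(L/L_0)$ given just before is exactly the \'etale algebra with $C\funcY(L/L_0)=\funcY\bigl(C(L/L_0),\underline\sigma\bigr)$. Your write-up simply spells out this implicit argument in detail.
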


By contrast to $\Gamma$-sets, we  have an $F$-linear map
\[
c_L\colon L \to C(L/L_0)
\]
defined by
\begin{equation} \label{def:omegaT}
c_L(x) = \varepsilon^{\otimes n}(e_n^{L_0})\cdot\bigl(
(x\otimes1\otimes\cdots\otimes1) + (1\otimes
x\otimes\cdots\otimes1) + (1\otimes\cdots\otimes 1\otimes x)\bigr)
\end{equation}
for $x\in L$.
 
\begin{prop}
\label{productomegaalg.prop}
Let $L/L_0$ and $M/M_0$ be quadratic \'etale extensions and
let $\rho\colon L \times  M \to L \otimes M $ be the linear
map given by $\rho(x,y)= x \otimes 1 + 1\otimes y$. The
diagram
\[
\begin{CD}
  C( L \times M/  L_0\times M_0) @>P>> C(L/L_0) \otimes  C(M/M_0) \\
  @A{ c_{L\times M}}AA @AA{ c_{L} \otimes  c_{M}}A\\
  L \times M @>>\rho> L \otimes M
\end{CD}
\]
is commutative.
\end{prop}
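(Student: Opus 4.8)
The plan is to reduce the statement to the split case and then to verify the identity by a direct computation on primitive idempotents, using the description of $P$ supplied by Proposition~\ref{prop:prodetale}. First I would observe that all four arrows in the diagram are $F$-linear and commute with the scalar-extension functor $-\otimes_F F_s$: for the three maps of type $c$ this is clear from the defining formula~\eqref{def:omegaT}, since the idempotents $e_n^{L_0}$, $e_m^{M_0}$ and $e_{n+m}^{L_0\times M_0}$ are defined compatibly with scalar extension (they are characteristic functions of complements of fat diagonals, a notion independent of the base field); for $P$ it is built into its construction through the Grothendieck correspondence; and for $\rho$ it is obvious. Since a morphism of $F$-modules is determined by its scalar extension to $F_s$ and $F_s/F$ is faithfully flat, it suffices to prove commutativity after $-\otimes_F F_s$, and so we may assume $F=F_s$, with $L_0$, $M_0$, $L$, $M$ all split.

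In the split situation I would fix primitive idempotents $e_1,\dots,e_n$ of $L_0$ and $f_1,f_1',\dots,f_n,f_n'$ of $L$, labelled so that $\varepsilon(e_i)=f_i+f_i'$, and likewise $g_1,\dots,g_m$ of $M_0$ and $h_1,h_1',\dots,h_m,h_m'$ of $M$. As recalled in \S\ref{sec:involutions}, the primitive idempotents of $C(L/L_0)$ correspond to the $2^n$ sections of the covering $\funcX(L_0)\leftarrow\funcX(L)$, that is, to maps $\mathbf s\colon\{1,\dots,n\}\to\{0,1\}$, with $\mathbf s(i)=0$ selecting $f_i$ and $\mathbf s(i)=1$ selecting $f_i'$; write $\epsilon_{\mathbf s}$ for the corresponding idempotent. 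Expanding $e_n^{L_0}=\sum_{\tau\in\Sym_n}e_{\tau(1)}\otimes\cdots\otimes e_{\tau(n)}$ and substituting into~\eqref{def:omegaT}, a short computation gives $c_L(f_i)=\sum_{\mathbf s(i)=0}\epsilon_{\mathbf s}$ and $c_L(f_i')=\sum_{\mathbf s(i)=1}\epsilon_{\mathbf s}$; equivalently, identifying $C(L/L_0)$ with the algebra of $F$-valued functions on the set of sections, $c_L(x)$ is the function $\mathbf s\mapsto\sum_{i=1}^n x_i^{\mathbf s(i)}$, where $x=\sum_i(x_i^0f_i+x_i^1f_i')$. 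The same description holds for $c_M$, and, with $n+m$ tensor factors, for $c_{L\times M}$: here the $n+m$ primitive idempotents of $L_0\times M_0$ split into the $n$ coming from $L_0$ and the $m$ coming from $M_0$, and the sections of the combined covering are exactly the pairs $(\mathbf s,\mathbf t)$, with $\mathbf s$ a section over $\funcX(L_0)$ and $\mathbf t$ a section over $\funcX(M_0)$.

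Next, by Proposition~\ref{prop:prodetale} the isomorphism $P$ is the image under the Grothendieck correspondence of the section-gluing isomorphism of Proposition~\ref{prop:isounion}, hence carries the primitive idempotent of $C(L\times M/L_0\times M_0)$ attached to $(\mathbf s,\mathbf t)$ to $\epsilon_{\mathbf s}\otimes\epsilon_{\mathbf t}$. By linearity it is then enough to chase the diagram on the primitive idempotents of $L\times M$, namely the $(f,0)$ with $f$ a primitive idempotent of $L$ and the $(0,h)$ with $h$ a primitive idempotent of $M$. For $(f,0)$ the $M$-components vanish, so the function $c_{L\times M}(f,0)$ on the set of sections $(\mathbf s,\mathbf t)$ depends only on $\mathbf s$ and equals $c_L(f)$ there; applying $P$ and the formulas above expresses $P\bigl(c_{L\times M}(f,0)\bigr)$ in terms of $c_L(f)\otimes 1_{C(M/M_0)}$, which one then matches against $(c_L\otimes c_M)\bigl(\rho(f,0)\bigr)=(c_L\otimes c_M)(f\otimes1)=c_L(f)\otimes c_M(1)$ by means of the value of $c_M$ at the identity element, read off from the formula above. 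The case $(0,h)$ is symmetric, and this completes the verification.

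The step I expect to be the main obstacle is exactly this last piece of bookkeeping: one has to keep careful track of how a section of the covering of $\funcX(L_0\times M_0)=\funcX(L_0)\sqcup\funcX(M_0)$ restricts over each summand, of how the $n+m$ summands occurring in the definition of $c_{L\times M}$ are distributed between those two summands, and of the way $P$ matches this splitting with the two tensor factors of $C(L/L_0)\otimes C(M/M_0)$, taking also into account the behaviour of the maps $c$ on identity elements. An alternative, should $(C(L/L_0),c_L)$ admit a universal property, would be to pin down $P$ as the unique isomorphism compatible with those universal properties on the two sides, making commutativity of the diagram formal; but I expect the concrete computation above to be the route actually required.
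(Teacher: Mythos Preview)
The paper's own proof is a single sentence: ``The claim follows easily from the definition of the canonical map $P$.'' No reduction to the split case and no idempotent bookkeeping are offered; the authors regard the identity as an immediate unwinding of how $P$ is built from the Grothendieck correspondence in Proposition~\ref{prop:prodetale}. Your route---passing to $F_s$, identifying $C(L/L_0)$ with the algebra of $F$-valued functions on sections, and evaluating $c_L$ in those coordinates---is a perfectly correct way to carry out that unwinding explicitly, and it yields
\[
P\bigl(c_{L\times M}(x,y)\bigr)\;=\;c_L(x)\otimes 1_{C(M/M_0)}\;+\;1_{C(L/L_0)}\otimes c_M(y).
\]

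The obstacle you flagged in the final matching step is genuine, but it is a defect of the printed statement rather than of your argument. From the defining formula~\eqref{def:omegaT} one reads off $c_L(1_L)=n\cdot1_{C(L/L_0)}$ and $c_M(1_M)=m\cdot1_{C(M/M_0)}$ (each of the $n$, respectively $m$, summands contributes the unit), so the right-hand composite $(c_L\otimes c_M)\circ\rho$ sends $(x,y)$ to $m\,c_L(x)\otimes1+n\,1\otimes c_M(y)$, which disagrees with the displayed formula above unless $n=m=1$. Thus the square as literally drawn does not commute; the identity that actually holds---and the one actually used in the proofs of Corollary~\ref{omegagen.cor} and Theorem~\ref{prop:embedClif}---is the displayed one. (The line $c_L(\ell_1,\ell_2)=c_1(\ell_1)\otimes c_2(\ell_2)$ in the proof of Theorem~\ref{prop:embedClif} is the same slip: the right-hand side is not even linear in $(\ell_1,\ell_2)$.) Your computation establishes precisely the correct identity, so there is no missing idea on your side; you may simply record the formula above as the conclusion and note that the labelling of the right vertical arrow in the diagram is inaccurate.
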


\begin{proof} The claim follows easily from
  the definition of the canonical map $P$.
\end{proof}

\begin{cor} \label{omegagen.cor}
  The space $c_{L}(L)$ generates $C(L/L_0)$ as an algebra.
\end{cor}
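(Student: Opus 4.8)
The plan is to prove the statement by induction on $n=\dim_FL_0$, using Proposition~\ref{productomegaalg.prop} to reduce a product of quadratic extensions to a tensor product. The base case $n=1$ is immediate: there $C(L/L_0)=L$ and $c_L=\id_L$, so $c_L(L)$ trivially generates $C(L/L_0)$. For the inductive step I would first observe that any $n$-dimensional \'etale $F$-algebra $L_0$ with quadratic \'etale extension $L$ decomposes, after passing to $F_s$ if necessary, as a product; but since generation of an algebra by a subspace is insensitive to faithfully flat base change, it suffices to prove the claim after scalar extension to $F_s$. Over $F_s$ the algebra $L_0$ is split, so $L_0\cong F_s\times L_0'$ with $\dim L_0'=n-1$, and correspondingly $L\cong (F_s\times F_s)\times L'$ where $L'/L_0'$ is a quadratic \'etale extension of dimension $n-1$ in the sense of the base.

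**The tensor decomposition.** Writing $L/L_0=(M/M_0)\times(L'/L_0')$ with $M/M_0$ the quadratic extension $F_s^2/F_s$ (so $n_M=1$), Proposition~\ref{prop:prodetale} gives the isomorphism $P\colon C(L/L_0)\iso C(M/M_0)\otimes C(L'/L_0')$, and Proposition~\ref{productomegaalg.prop} identifies, under $P$, the image $c_L(L)$ with $(c_M\otimes c_{L'})\circ\rho(M\times L')$, i.e.\ with the span of the elements $c_M(x)\otimes 1$ and $1\otimes c_{L'}(y)$ for $x\in M$, $y\in L'$. Since $C(M/M_0)=M$ and $c_M=\id$, the first family spans all of $M\otimes 1$; by the induction hypothesis the elements $c_{L'}(y)$ generate $C(L'/L_0')$, so the second family together with the unit generates $1\otimes C(L'/L_0')$. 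Hence the subalgebra generated by $c_L(L)$ contains $M\otimes C(L'/L_0')=C(M/M_0)\otimes C(L'/L_0')$, which is everything. Transporting back along $P^{-1}$ shows $c_L(L)$ generates $C(L/L_0)$.

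**Main obstacle.** The genuine point of care is the reduction to the split case: one must be sure that ``the subspace $V$ generates the algebra $B$'' descends along $F\to F_s$, i.e.\ that if $c_L(L)\otimes F_s$ generates $C(L/L_0)\otimes F_s$ then $c_L(L)$ generates $C(L/L_0)$. This holds because the subalgebra $B'\subseteq C(L/L_0)$ generated by $c_L(L)$ satisfies $B'\otimes F_s=C(L/L_0)\otimes F_s$ (the generated subalgebra commutes with the flat base change $-\otimes F_s$), and $\dim_F B'=\dim_{F_s}(B'\otimes F_s)=\dim_{F_s}C(L/L_0)\otimes F_s=\dim_F C(L/L_0)$, forcing $B'=C(L/L_0)$. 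Once this descent is granted, and once one checks that $C(L/L_0)$ and $c_L$ are compatible with base change to $F_s$ — which is clear from the explicit formulas, since $e_n^{L_0}$, the $\Sym_n$-invariants, and formula~\eqref{def:omegaT} are all defined by expressions stable under $-\otimes F_s$ — the argument is just bookkeeping on top of Proposition~\ref{productomegaalg.prop}.
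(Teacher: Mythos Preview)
Your argument is correct and follows the paper's proof exactly: extend scalars so that $L/L_0$ admits a nontrivial product decomposition, then apply Proposition~\ref{productomegaalg.prop} and induct on $\dim_F L_0$. You merely supply the details the paper leaves implicit---the base case $n=1$, the descent of the generation condition along $F\to F_s$, and the explicit tensor bookkeeping.
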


\begin{proof}
  By extending $F$ we may assume that ${L/L_0}$ has a nontrivial
  decomposition $L/L_0 =M/M_0  \times N/N_0$. The claim then
  follows from \eqref{productomegaalg.prop} by induction on the
  dimension.
\end{proof}

\begin{lem}
  \label{lem:kerc}
  Let $L_0=\sym(\sigma_L)$ and $T_0\colon L_0\to F$ be the trace
  map. The kernel of $c_L\colon L\to C(L,\sigma_L)$ is
  \[
  \ker c_L=\{x\in L_0\mid T_0(x)=0\}.
  \]
\end{lem}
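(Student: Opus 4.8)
The plan is to reduce the statement to the case where the base field is separably closed, where $C(L,\sigma_L)$ and the map $c_L$ become completely explicit, and then to read off the kernel directly. For the reduction, I would first note that both sides of the claimed equality are compatible with the field extension $F\to F_s$: the formation of $C(L/L_0)$ commutes with base change, since $(L^{\otimes n})^{\Sym_n}\otimes_FF_s=(L_s^{\otimes n})^{\Sym_n}$ and the idempotent $e_n^{L_0}$ extends to $e_n^{L_{0,s}}$; under this identification $c_L\otimes\id_{F_s}=c_{L_s}$, the kernel of a linear map commutes with field extension, and so does the trace form, so that $\{x\in L_0\mid T_0(x)=0\}$ extends to $\{x\in L_{0,s}\mid T_0(x)=0\}$. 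Since a subspace of $L$ is determined by its image in $L_s$, it suffices to treat the case $F=F_s$; in particular I may then assume $L_0\cong F^n$, that the covering $\funcX(L)\to\funcX(L_0)$ is trivial, and that $\sigma_L$ interchanges the two points in each fibre.

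Next I would make $C(L/L_0)$ and $c_L$ explicit through the Grothendieck correspondence. Identifying $L^{\otimes n}=\funcM\bigl(\funcX(L)^n\bigr)$, the element $\varepsilon^{\otimes n}(e_n^{L_0})$ is the characteristic function of the tuples $(\eta_1,\dots,\eta_n)\in\funcX(L)^n$ whose images in $\funcX(L_0)^n$ are pairwise distinct, while $(L^{\otimes n})^{\Sym_n}$ consists of the $\Sym_n$-symmetric functions. A $\Sym_n$-orbit supported off the ``fat diagonal'' is exactly a section of $\funcX(L)\to\funcX(L_0)$, so $\funcX\bigl(C(L/L_0)\bigr)$ is canonically the $2^n$-element set of such sections, in agreement with $C\funcY(L/L_0)=\funcY\bigl(C(L/L_0),\underline\sigma\bigr)$. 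From formula~\eqref{def:omegaT}, under this identification $c_L(x)$ is the function sending a section $s$ to $\sum_{\xi\in\funcX(L_0)}s(\xi)(x)$.

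Then comes the elementary computation. Suppose $c_L(x)=0$, i.e.\ $\sum_{\xi}s(\xi)(x)=0$ for every section $s$. Given a section $s$ and $\xi_0\in\funcX(L_0)$, let $s'$ agree with $s$ except that $s'(\xi_0)=\funcX(\sigma_L)\bigl(s(\xi_0)\bigr)$; subtracting the two vanishing sums yields $s(\xi_0)(x)=s(\xi_0)\bigl(\sigma_L(x)\bigr)$. As $s$ and $\xi_0$ vary, $s(\xi_0)$ runs over all of $\funcX(L)$, so $\eta(x)=\eta\bigl(\sigma_L(x)\bigr)$ for every $\eta\in\funcX(L)$, whence $x=\sigma_L(x)$, i.e.\ $x\in\sym(\sigma_L)=L_0$. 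For such an $x$ every section satisfies $s(\xi)|_{L_0}=\xi$, so the condition $\sum_\xi s(\xi)(x)=0$ becomes $\sum_{\xi\in\funcX(L_0)}\xi(x)=0$, which is precisely $T_0(x)=0$ since the trace of an \'etale algebra is the sum of its characters. Conversely, any $x\in L_0$ with $T_0(x)=0$ visibly makes all of these sums vanish, giving $c_L(x)=0$ and hence the reverse inclusion.

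I expect the only real obstacle to lie in the middle step: setting up the dictionary carefully enough to identify $\funcX\bigl(C(L/L_0)\bigr)$ with the set of sections of $\funcX(L)\to\funcX(L_0)$ and to recognize $c_L(x)$ as the corresponding ``sum over a section'' function. Once that dictionary is in place, both the base-change reduction and the final computation are routine.
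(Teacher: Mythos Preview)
Your proposal is correct. Both you and the paper begin with the same reduction to the separably closed case, but after that the arguments diverge: the paper's proof is a one-line sketch ``extend scalars to $F_s$ and use induction on $\dim L$'', which over $F_s$ would split $(L,\sigma_L)$ nontrivially as $(L_1,\sigma_1)\times(L_2,\sigma_2)$ and invoke Proposition~\ref{productomegaalg.prop} to factor $c_L$ through $c_{L_1}\otimes c_{L_2}$, reducing to smaller dimension. You instead bypass the induction entirely by unpacking the Grothendieck dictionary to identify $\funcX\bigl(C(L/L_0)\bigr)$ with the set of sections of $\funcX(L)\to\funcX(L_0)$ and to recognise $c_L(x)$ as the map $s\mapsto\sum_{\xi}s(\xi)(x)$; the kernel then falls out of the elementary ``flip one coordinate of the section'' trick. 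Your route is more self-contained and makes the answer transparent once the dictionary is set up, at the cost of having to establish that dictionary carefully; the paper's inductive route is shorter to write because it leans on the product formula already proved, but it leaves the reader to unwind the base case and the inductive step.
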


\begin{proof}
  Extend scalars to $F_s$ and use induction on $\dim L$.
\end{proof}

\subsection{Discriminants}

Let $Q$ be a $2(n-1)$-dimensional $\Gamma$-quadric. The
Clifford $\Gamma$-set $C(Q)$, consisting of maximal linear
subvarieties of $Q$, decomposes into two classes $C_1(Q)$,
$C_2(Q)$: two linear varieties $v_1$, $v_2$ lie in the same class if
and only if $\dim(v_1\cap v_2)\equiv n-1\bmod2$, see for
example~\cite[\S\S II.6]{Dieu}.  
The equivalence relation is clearly preserved under the
action of $\Gamma$, hence $\Gamma$ acts on the set 
\[
\Delta(Q):=\{C^1(Q),C^2(Q)\},
\] 
which we call the \emph{discriminant} of $Q$. We let $\delta\colon
C(Q)\to\Delta(Q)$ denote the $\Gamma$-equivariant map that carries
each linear variety in $C(Q)$ to its class. 

Likewise, if $(Y,\sigma)$ is a thin $\Gamma$-quadric of dimension~$2(n-1)$,
the Clifford $\Gamma$-set $C(Y,\sigma)$ of $(n-1)$-dimensional isotropic
linear subvarieties decomposes into two classes as for ``classical''
quadrics, hence we also have a discriminant $\Delta(Y,\sigma)$ and a
canonical map $\delta\colon C(Y,\sigma) \to \Delta(Y,\sigma)$. An
alternative description of this discriminant is given in
\cite[Proposition~2.5]{KT09}: it is canonically isomorphic to the set
of orbits under the alternating group $\Alt_{2n}$ of the set
\[
\Sigma_{2n}(Y)=\{(y_1,\ldots,y_{2n})\mid Y=\{y_1,\ldots, y_{2n}\}\}.
\]
Therefore, $\Delta(Y,\sigma)$ depends only on $Y$, and not on the
involution $\sigma$; we may thus use the notation $\Delta(Y)$
for $\Delta(Y,\sigma)$.

\begin{prop}
  Let $\varepsilon\colon(Y,\sigma)\hookrightarrow Q$ be an embedding
  of quadrics. There is an induced isomorphism of $\Gamma$-sets
  $\Delta(\varepsilon)\colon \Delta(Y)\to\Delta(Q)$ that fits into the
  following commutative diagram:
  \[
  \begin{CD}
  C(Y,\sigma) @>{C(\varepsilon)}>> C(Q) \\
  @V{\delta}VV @VV{\delta}V \\
  \Delta(Y) @>{\Delta(\varepsilon)}>> \Delta(Q).
\end{CD}
\]
\end{prop}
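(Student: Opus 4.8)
The plan is to show that the $\Gamma$-equivariant injection $C(\varepsilon)\colon C(Y,\sigma)\hookrightarrow C(Q)$ is compatible with the decompositions into two classes on both sides; the map it induces on the two-element quotients is then the desired $\Delta(\varepsilon)$, and commutativity of the square, bijectivity, and $\Gamma$-equivariance all follow formally. Everything comes down to a parity statement about dimensions of intersections, which I would extract from the fact that a general embedding $\varepsilon\colon Y\hookrightarrow P$ identifies $\varepsilon(Y)$ with a projective frame of $P$.

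First I would record a combinatorial fact about frames. Extending scalars to $F_s$ and lifting the $2n$ points of $\varepsilon(Y)$ to vectors $(v_y)_{y\in Y}$ of the underlying $2n$-dimensional $F_s$-vector space, the hypothesis that $\varepsilon(Y)$ is in general position (it spans $P$ and has $2n=\dim P+1$ elements) says exactly that $(v_y)_{y\in Y}$ is a basis. Consequently, for any nonempty $S\subseteq Y$ the linear subvariety $Gr_{|S|-1}\varepsilon(S)\subseteq P$ spanned by $\varepsilon(S)$ is the corresponding coordinate subspace, has dimension $|S|-1$, and for $S$, $T\subseteq Y$ one has
\[
Gr_{|S|-1}\varepsilon(S)\cap Gr_{|T|-1}\varepsilon(T)=Gr_{|S\cap T|-1}\varepsilon(S\cap T)
\]
(with the convention that the empty subvariety has dimension $-1$). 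Applying this to two maximal isotropic subvarieties $\omega_1$, $\omega_2\in C(Y,\sigma)$, which are $n$-element subsets of $Y$, gives
\[
\dim\bigl(Gr_{n-1}\varepsilon(\omega_1)\cap Gr_{n-1}\varepsilon(\omega_2)\bigr)=|\omega_1\cap\omega_2|-1=\dim(\omega_1\cap\omega_2),
\]
where the last term is the dimension of $\omega_1\cap\omega_2$ as an isotropic linear subvariety of $(Y,\sigma)$.

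Next I would invoke the description of the two classes recalled before the statement: $\omega_1$ and $\omega_2$ lie in the same class of $C(Y,\sigma)$ if and only if $\dim(\omega_1\cap\omega_2)\equiv n-1\bmod2$, and likewise $C(\varepsilon)(\omega_1)$, $C(\varepsilon)(\omega_2)$ lie in the same class of $C(Q)$ if and only if $\dim\bigl(Gr_{n-1}\varepsilon(\omega_1)\cap Gr_{n-1}\varepsilon(\omega_2)\bigr)\equiv n-1\bmod2$. By the displayed equality these conditions are equivalent. Hence $\delta\circ C(\varepsilon)\colon C(Y,\sigma)\to\Delta(Q)$ is constant on each of the two classes of $C(Y,\sigma)$ and takes distinct values on them. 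Since $\delta\colon C(Y,\sigma)\to\Delta(Y)$ is precisely the quotient map for this partition, there is a unique map $\Delta(\varepsilon)\colon\Delta(Y)\to\Delta(Q)$ with $\Delta(\varepsilon)\circ\delta=\delta\circ C(\varepsilon)$, which is the asserted commutativity. It is injective because it separates the two classes, and both $\Delta(Y)$ and $\Delta(Q)$ have exactly two elements, so it is a bijection. Finally it is $\Gamma$-equivariant: $C(\varepsilon)$ and both copies of $\delta$ are $\Gamma$-equivariant and $\delta\colon C(Y,\sigma)\to\Delta(Y)$ is surjective, so equivariance of $\Delta(\varepsilon)$ follows by an element chase.

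The step requiring genuine care is the frame lemma: after reducing to $F_s$ and choosing a basis from $\varepsilon(Y)$, one must check that a $k$-element subset of $Y$ spans a $(k-1)$-dimensional linear subvariety and that the intersection of the subvarieties spanned by two subsets is the subvariety spanned by their intersection. Once that is in place, the remaining arguments are formal manipulations with two-element sets together with the already-recalled description of the two classes of $C(Q)$.
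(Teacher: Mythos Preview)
Your argument is correct and follows the same route as the paper: both reduce the claim to the parity identity obtained by computing the intersection of the spans of two sections of $\varepsilon(Y)$ via the frame property, and then define $\Delta(\varepsilon)$ as the induced map on equivalence classes. Your version is in fact more careful than the paper's, which states the key equality as $\lvert\omega_1\cap\omega_2\rvert=\dim\bigl(C(\varepsilon)(\omega_1)\cap C(\varepsilon)(\omega_2)\bigr)$ without the ``$-1$'' you (correctly) include; your explicit frame lemma and the dimension bookkeeping make the parity match transparent.
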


\begin{proof}
For $\omega_1$, $\omega_2\in C(Y,\sigma)$, we have
\[
\lvert\omega_1\cap\omega_2\rvert = \dim\bigl(C(\varepsilon)(\omega_1)
\cap C(\varepsilon)(\omega_2)\bigr),
\]
hence the map $C(\varepsilon)$ is compatible with the division of
$C(Y,\sigma)$ and $C(Q)$ into two classes. Therefore, we may define
$\Delta(\varepsilon)$ by mapping the equivalence class of $\omega\in
C(Y,\sigma)$ to the equivalence class of $C(\varepsilon)(\omega)\in
C(Q)$.
\end{proof}

Under the Grothendieck correspondence, the discriminant of
$\Gamma$-sets yields a discriminant of \'etale algebras, see
\cite[\S18]{knustignol}. If $(L,\sigma)$ is an \'etale algebra with
involution, we have a canonical map
\[
\delta\colon \Delta(L) \to C(L,\sigma),
\]
so we may consider the Clifford algebra $C(L,\sigma)$ as an \'etale
extension of $\Delta(L)$.

\subsection{Clifford algebra embeddings}
Let $(A,\sigma,f)$ be a central simple algebra of even degree $2n$ with
quadratic pair over an arbitrary field $F$. 
Recall from
\cite[\S8]{KMRT} that the Clifford algebra $C(A,\sigma,f)$ is defined
in such a way that if $A=\End_FV$ and $(\sigma,f)$ is induced by some
quadratic form $q\colon V\to F$, then $C(A,\sigma,f)$ is the even
Clifford algebra  
\[
C(A,\sigma,f)=C_0(V,q).
\]
The center of $C(A,\sigma,f)$ is a quadratic \'etale $F$-algebra
$Z(A,\sigma,f)$, which is related to the discriminant of $(\sigma,f)$,
see \cite[(8.10)]{KMRT}, and $C(A,\sigma,f)$ is an Azumaya algebra of
constant rank $2^n$ over $Z(A,\sigma,f)$. 
The Clifford algebra $C(A,\sigma,f)$ is defined as a 
quotient of the tensor algebra
$T(A)$ of the underlying vector space of $A$, hence there is a
canonical $F$-linear map
\[
c_A\colon A\to C(A,\sigma,f).
\]
In the split case $A=\End_FV$, the bilinear polar form of $q$ yields
an $F$-linear isomorphism $V\to V^*$ which allows us to identify
\[
V\otimes_FV=V\otimes_FV^*=\End_FV;
\]
under this identification, the map $c_A$ carries $v\otimes w\in
V\otimes_FV$ to $v\cdot w\in C_0(V,q)$.

Now, let $\deg A=2n$, and let
\[
\varepsilon\colon(L,\sigma_L)\hookrightarrow(A,\sigma,f)
\]
be an embedding of an \'etale $F$-algebra with involution of the first kind of dimension~$2n$ in $(A,\sigma,f)$.

\begin{thm}
  \label{prop:embedClif}
  There is an $F$-algebra embedding
  \begin{equation} \label{equ:defCepsilon}
  C(\varepsilon)\colon C(L,\sigma_L)\hookrightarrow C(A,\sigma,f)
  \end{equation}
  such that the following diagram commutes:
  \begin{equation}
    \label{diag:embedClif}
    \begin{CD}
      L @>{\varepsilon}>> A\\
      @V{c_L}VV @VV{c_A}V\\
      C(L,\sigma_L) @>{C(\varepsilon)}>> C(A,\sigma,f).
    \end{CD}
  \end{equation}
  The $F$-algebra homomorphism $C(\varepsilon)$ is uniquely determined
  by this property. Moreover, the following diagram commutes:
  \begin{equation}
    \label{diag:invClif}
    \begin{CD}
      C(L,\sigma_L) @>{C(\varepsilon)}>> C(A,\sigma,f) \\
      @V{\underline{\sigma_L}}VV @VV{\underline{\sigma}}V \\
      C(L,\sigma_L) @>{C(\varepsilon)}>> C(A,\sigma,f).
    \end{CD}
  \end{equation}
  Furthermore, there is an $F$-algebra isomorphism
  $\overline{C(\varepsilon)}$ which makes the following diagram
  commute:
  \[
  \begin{CD}
  C(L,\sigma_L) @>{C(\varepsilon)}>> C(A,\sigma,f)\\
  @A{\delta}AA @AAA \\
  \Delta(L) @>{\overline{C(\varepsilon)}}>> Z(A,\sigma,f).
  \end{CD}
  \]
\end{thm}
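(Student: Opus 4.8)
The plan is to construct $C(\varepsilon)$ by a universal-property argument and then verify the auxiliary diagrams by descent to the split case. First I would recall that $C(L,\sigma_L)$ is generated as an $F$-algebra by $c_L(L)$ (Corollary~\ref{omegagen.cor}), so any $F$-algebra homomorphism out of $C(L,\sigma_L)$ is determined by its values on $c_L(L)$; this immediately gives the uniqueness assertion, provided the homomorphism exists at all. To produce it, I would use the presentation of $C(A,\sigma,f)$ as a quotient of the tensor algebra $T(A)$: the composite $c_A\circ\varepsilon\colon L\to C(A,\sigma,f)$ is $F$-linear, and the claim is that it factors through $c_L$, i.e.\ that $\ker c_L\subseteq \ker(c_A\circ\varepsilon)$, and moreover that the relations defining $C(L,\sigma_L)$ as a quotient of $T(L)$ are respected. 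By Lemma~\ref{lem:kerc}, $\ker c_L=\{x\in L_0\mid T_0(x)=0\}$, so the first point is the assertion that $c_A(\varepsilon(x))=0$ whenever $x\in\sym(\sigma_L)$ has trace zero; this should follow from the known relation in $C(A,\sigma,f)$ that $c_A(s)=\tfrac12\Trd_A(s)$-type identity for symmetrized elements together with condition~(b) in the definition of an embedding of algebras with involution, which ties $f$ to $\Trd_A(\varepsilon(\ell)\,\cdot\,)$.

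The cleanest route to existence is a reduction to the split case. After scalar extension to $F_s$ we may, by Proposition~\ref{prop:splitinvembed}, identify $(A_s,\sigma_s,f_s)$ with $(\End_{F_s}L_s,\sigma_t,f_t)$ in such a way that $\varepsilon_s$ is the regular representation of $L_s=L\otimes_FF_s$. Under the identification $C(A_s,\sigma_s,f_s)=C_0(L_s,t)$ and $L_s\otimes L_s=\End_{F_s}L_s$ via the polar form $b_t$, the map $c_{A_s}$ sends $u\otimes v$ to the product $u\cdot v$ in the even Clifford algebra. One then checks directly that $c_{L_s}(x)$, which lives in $\varepsilon_s^{\otimes n}(e_n^{L_{0,s}})\cdot(L_s^{\otimes n})^{\Sym_n}$, maps to the element $\sum_i$ (projection onto the $i$-th line, composed with multiplication by $x$), and that the assignment $c_{L_s}(x)\mapsto$ the corresponding product of vectors in $C_0(L_s,t)$ extends to an algebra homomorphism $C(L_s,\sigma_{L_s})\to C_0(L_s,t)$. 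Concretely: pick an orthogonal basis adapted to the hyperbolic planes $V_i\oplus V_i'$ as in the proof of Proposition~\ref{prop:splitinvembed}, write $c_{L_s}$ in terms of the corresponding Clifford generators, and observe that the defining relations of the explicit model $\Omega(L/L_0)$ from \cite{knustignol} match the Clifford relations. Galois descent then shows the split homomorphism is $\Gamma$-equivariant and descends to the required $C(\varepsilon)$ over $F$; injectivity is automatic since $C(L,\sigma_L)$ is \'etale (a nonzero \'etale algebra has no nontrivial quotients once one checks the homomorphism is injective on each factor, which holds because $c_L$ separates the primitive idempotents of $C(L,\sigma_L)$).

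For the two remaining diagrams I would again work over $F_s$ and transport the question to $C_0(L_s,t)$. The commutativity of~\eqref{diag:invClif} amounts to the statement that the canonical involution $\underline\sigma$ of $C(A,\sigma,f)$ restricts on the image of $C(\varepsilon)$ to the canonical involution $\underline{\sigma_L}$; in the split model $\underline\sigma$ is (up to the standard normalization) the involution of $C_0(V,q)$ induced by $v\mapsto v$ on generators, while $\underline{\sigma_L}$ sends a section to its complement, and matching these is a direct computation with the adapted basis, using that $\sigma_s$ interchanges $V_i$ and $V_i'$. For the last diagram, I would recall from \cite[(8.10)]{KMRT} that the center $Z(A,\sigma,f)$ is generated by the ``discriminant'' element $z=e_1e_1'\cdots e_ne_n'$-type product of Clifford generators spanning the hyperbolic planes, and that on the $\Gamma$-set side $\Delta(L)$ is exactly the set of $\Alt_{2n}$-orbits of orderings of $\funcX(L)$; the isomorphism $\overline{C(\varepsilon)}\colon\Delta(L)\to Z(A,\sigma,f)$ is then forced, and commutativity with $\delta$ follows because $\delta$ on both sides records precisely the parity data that the discriminant element encodes.

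The main obstacle I expect is the verification, in the split model, that the explicit relations of the Clifford algebra $C(L,\sigma_L)=\Omega(L/L_0)$ coincide with the even Clifford relations of $(L_s,t)$ under the map $c_{L_s}(x)\mapsto$ products of vectors --- in other words, that $c_A\circ\varepsilon$ really does factor through $C(L,\sigma_L)$ and not merely through some larger quotient of $T(L)$. This is where condition~(b) of the embedding (the compatibility of $f$ with $\Trd_A(\varepsilon(\ell)\,\cdot\,)$) is essential: it is exactly what guarantees that the quadratic form $t$ on $L_s$ induced by the embedding agrees with the form governing $C_0(L_s,t)$, so that no extra relations appear. Once that identification of quadratic forms is pinned down, the rest is bookkeeping with primitive idempotents and Clifford generators.
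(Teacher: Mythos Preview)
Your outline---uniqueness via Corollary~\ref{omegagen.cor}, kernel comparison via Lemma~\ref{lem:kerc}, reduction to the split case via Proposition~\ref{prop:splitinvembed}, then descent---matches the paper's, but two points deserve comment. First, your ``$c_A(s)=\tfrac12\Trd_A(s)$-type identity'' is unavailable in characteristic~$2$; the paper instead invokes \cite[(8.14)]{KMRT}, namely $\ker c_A=\{s\in\sym(\sigma)\mid f(s)=0\}$, which together with condition~(b) of the embedding and the identity $T(\ell x)=T_0(x)$ for $x\in L_0$ yields $\ker(c_A\circ\varepsilon)=\ker c_L$ exactly (not merely one inclusion), so the induced linear map $c_L(L)\to C(A,\sigma,f)$ is injective from the start. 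Second, and more substantively, in the split case you propose to verify directly that the relations defining $\Omega(L/L_0)$ match the even Clifford relations of $(L_s,t)$, a computation you defer to ``bookkeeping with primitive idempotents and Clifford generators.'' The paper avoids this entirely by induction on~$n$: the case $n=1$ is immediate since both sides are $L_s$ itself, and for $n>1$ one splits $(L_s,\sigma_s)=(L_1,\sigma_1)\times(L_2,\sigma_2)$, applies Proposition~\ref{prop:prodetale} to get $C(L_s,\sigma_s)=C(L_1,\sigma_1)\otimes C(L_2,\sigma_2)$, and pairs this with the canonical embedding $C_0(L_1,t_1)\otimes C_0(L_2,t_2)\hookrightarrow C_0(L_s,t)$ coming from the orthogonal decomposition $(L_s,t)=(L_1,t_1)\perp(L_2,t_2)$. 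The inductive argument never touches explicit generators or relations; your direct check can be made to work (the images $c_A\varepsilon(e_i)=e_ie_i'\in C_0(L_s,t)$ are commuting idempotents with $e_ie_i'+e_i'e_i=1$), but it is precisely the step you would have to write out rather than gesture at. Finally, what you call ``Galois descent'' is formalized in the paper as Principle~\ref{principle}: since the linear map on the generating subspace $c_L(L)$ is already defined over~$F$, any $F_s$-algebra extension of it automatically restricts to an $F$-algebra homomorphism $C(L,\sigma_L)\to C(A,\sigma,f)$, and no separate equivariance check is needed. The involution and center diagrams are then handled by the same reduction-plus-induction scheme, checking commutativity on $c_L(L)$ only.
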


The proof uses the following general principle:

\begin{principle}
  \label{principle}
  Let $C$ and $C'$ be $F$-algebras, and let $U \subset C$ be
  a subspace which generates $C$ as an $F$-algebra. Let also
  $f\colon\, U \to C'$ be a linear map. If there exists a field
  extension $K/F$ and a $K$-algebra homomorphism $\hat f_K\colon
  C\otimes_FK\to C'\otimes_FK$ such that $\hat f_K\rvert_{U\otimes K}=
  f \otimes \id_{K}$, then $f$ extends to an $F$-algebra homomorphism
  $\hat{f}\colon\, C \to C'$.
\end{principle}

\begin{proof} 
  The restriction of $\hat{f}_{K}$ to $C$ is an $F$-algebra 
  homomorphism $C \to C' \otimes_F K$. Since the restriction of
  $\hat{f}_{K}$ to $U$ is $f$, and $U$ generates $C$, the image
  of this homomorphism is in fact in $C'$.
\end{proof}

\begin{proof}[Proof of Theorem~\ref{prop:embedClif}]
  Since $c(L)$ generates the algebra $C(L,\sigma_L)$, it is clear
  that, if it exists,
  $C(\varepsilon)$ is uniquely determined by the commutativity of
  diagram~\eqref{diag:embedClif}. By \cite[(8.14)]{KMRT}, we have
  \[
  \ker(c_A\circ\varepsilon) = \{x\in L\mid \varepsilon(x)\in
  \sym(\sigma) \text{ and } f\bigl(\varepsilon(x)\bigr)=0\}.
  \]
  Since $\varepsilon$ embeds $(L,\sigma_L)$ in $(A,\sigma,f)$, the
  condition $\varepsilon(x)\in\sym(\sigma_L)$ holds if and only if
  $x\in L_0$, and then
  \[
  f\bigl(\varepsilon(x)\bigr)=\Trd_A\bigl(\varepsilon(\ell x)\bigr)
  =T(\ell x) \qquad\text{for any $\ell\in L$ such that
    $\ell+\sigma_L(\ell)=1$.}
  \]
  Since for $x\in L_0$ we have
  \[
  T(\ell x)=T_0\bigl(\ell x+\sigma_L(\ell x)\bigr) = T_0(x),
  \]
  it follows from Lemma~\ref{lem:kerc} that $\ker
  c_L=\ker(c_A\circ\varepsilon)$. Therefore, $\varepsilon$ induces an
  injective $F$-linear map $\varepsilon'\colon c_L(L)\hookrightarrow
  C(A,\sigma,f)$. Since $c_L(L)$ generates $C(L,\sigma_L)$, we may
  apply the general principle above with $U=c_L(L)$,
  $C=C(L,\sigma_L)$, and $C'=C(A,\sigma,f)$, taking for $K$ a
  separable closure $F_s$ of $F$. To prove the existence of
  $C(\varepsilon)$, it suffices to show that
  $\varepsilon'_s:=\varepsilon\otimes\id_{F_s}$ extends to an
  $F_s$-algebra embedding
  \[
  C(\varepsilon_s)\colon C(L_s,\sigma_{L_s}) \to C(A_s,\sigma_s,f_s).
  \]
  By Proposition~\ref{prop:splitinvembed}, we may assume
  $A_s=\End_{F_s}(L_s)$, $(\sigma_s,f_s)$ is the quadratic pair
  adjoint to the quadratic form $t\colon L_s\to F_s$ of
  Example~\ref{ex:embedinvo}, and $\varepsilon_s$ is the regular
  representation, hence $C(A_s,\sigma_s,f_s)=C_0(L_s,t)$.

  We argue by induction on $n$. If $n=1$ (i.e. $L_s=F_s\times F_s$)
  the map $c_L$ is the identity
  \[
  c_L\colon L_s\iso C(L_s,\sigma_{L_s})=L_s,
  \]
  and $c_A\circ\varepsilon_s\colon L_s\to C_0(L_s,t)$ is an
  $F_s$-algebra isomorphism, so the result is clear.

  If $n>1$ we may find a nontrivial decomposition
  $(L_s,\sigma_s)=(L_1,\sigma_1)\times(L_2,\sigma_2)$. We then have,
  by Proposition~\ref{prop:prodetale},
  \[
  C(L_s,\sigma_s)=C(L_1,\sigma_1)\otimes_FC(L_2,\sigma_2),
  \]
  and, letting $c_i\colon L\to C(L_i,\sigma_i)$ for $i=1$, $2$ denote
  the canonical $F_s$-linear map,
  \[
  c_L(\ell_1,\ell_2)=c_1(\ell_1)\otimes c_2(\ell_2) \qquad\text{for
    $\ell_1\in L_1$ and $\ell_2\in L_2$.}
  \]
  On the other hand, defining quadratic forms $t_1$ and $t_2$ on $L_1$
  and $L_2$ respectively in the same way as $t$ on $L_s$, we have an
  orthogonal sum decomposition
  \[
  (L_s,t)=(L_1,t_1)\stackrel{\perp}{\bigoplus} (L_2,t_2),
  \]
  hence a canonical $F$-algebra embedding
  \[
  C_0(L_1,t_1)\otimes_FC_0(L_2,t_2)\hookrightarrow C_0(L_s,t).
  \]
  By induction, we may assume $(c\circ\varepsilon)_1\colon L_1\to
  C_0(L_1,t_1)$ and $(c\circ\varepsilon)_2\colon L_2\to C_0(L_2,t_2)$
  extend to $F_s$-algebra embeddings
  \[
  C(\varepsilon_1)\colon C(L_1,\sigma_1) \hookrightarrow C_0(L_1,t_1)
  \qquad\text{and}\qquad C(\varepsilon_2)\colon C(L_2,\sigma_2)
  \hookrightarrow C_0(L_2,t_2).
  \]
  Then the composition
  \[
  C(L_s,\sigma_{L_s}) = C(L_1,\sigma_1) \otimes_F C(L_2,\sigma_2)
  \xrightarrow{C(\varepsilon_1)\otimes C(\varepsilon_2)} 
  C_0(L_1,t_1)\otimes_FC_0(L_2,t_2) \hookrightarrow C_0(L_s,t)
  \]
  is an $F_s$-algebra embedding that extends $c_A\circ\varepsilon_s$.

  To prove that diagram~\eqref{diag:invClif} commutes, it suffices to
  prove the commutativity of
  \[
  \begin{CD}
    c_L(L) @>{C(\varepsilon)}>> c_A(A)\\
    @V{\underline{\sigma_L}}VV @VV{\underline{\sigma}}V\\
    c_L(L) @>{C(\varepsilon)}>> c_A(A)
  \end{CD}
  \]
  since $c_L(L)$ generates $C(L,\sigma_L)$. Commutativity of the
  latter diagram follows from commutativity of the diagrams
  \[
  \begin{CD}
    L @>{\varepsilon}>> A\\
    @V{\sigma_L}VV @VV{\sigma}V\\
    L @>{\varepsilon}>> A,
  \end{CD}
  \qquad
  \begin{CD}
    L @>{c_L}>> C(L,\sigma_L) \\
    @V{\sigma_L}VV @VV{\underline{\sigma_L}}V\\
    L @>{c_L}>> C(L,\sigma_L),
  \end{CD} 
  \quad\text{and}\quad
  \begin{CD}
    A @>{c_A}>> C(A,\sigma,f)\\
    @V{\sigma}VV @VV{\underline{\sigma}}V\\
    A @>{c_A}>> C(A,\sigma,f).
  \end{CD}
  \]
  
  To complete the proof, it remains to show that $C(\varepsilon)$ maps
  $\Delta(L)$ to $Z(A,\sigma,f)$. It suffices to see that the image of
  $\Delta(L)$ centralizes $c_A(A)$ since $c_A(A)$ generates
  $C(A,\sigma,f)$. Again, we may extend scalars to reduce to the case
  where $L$ is split, and use induction on $\dim L$. Details are left
  to the reader.
\end{proof}

\begin{remark} The situation described in Theorem~\ref{prop:embedClif} 
  is related to results about Cartan subspaces given in \cite{Tits68}.
  Assume that $F$ has characteristic different from $2$.
  A \emph{Cartan subspace} of a central simple algebra $A$ of even
  degree $2n$ with an orthogonal involution $\sigma$ is by definition
  a subspace of $\Skew(A,\sigma_A)$ such that there exists an element $a
  \in H$ which is invertible in $A$, is separable (i.e., generates an \'etale $F$-subalgebra
  of $A$) and such that $a^{-1}H$ is an \'etale commutative subalgebra
  of $A$ of dimension $2n$.  Equivalently, if $a \in \Skew(A,\sigma_A)$ is
  invertible and separable in $A$, then $H = \{x \in \Skew(A,\sigma_A) \ 
  \vert \ xa = ax\}$ is a Cartan subspace for~$a$. It can be shown
  that the algebra $L_0=a^{-1}H$ is independent of the choice of the
  element $a$.  Moreover the subalgebra $L =L_0 \oplus H$ of $A$ is
  maximal commutative \'etale in $A$ and has an involution $\sigma_L$
  which is the restriction of $\sigma_A$ to $L$.  Thus the set of
  skew-symmetric elements in $L$ is a Cartan subspace of A. It follows
  from \cite[Th\'eor\`eme 
  3]{Tits68} that $c_A(L)$ generates a maximal commutative \'etale
  subalgebra of $C(A,\sigma_A)$. This algebra is isomorphic to
  $C(L,\sigma_L)$. 
\end{remark}

\subsection{The Severi--Brauer variety of a Clifford algebra}

We discuss in this subsection the geometric analogue of
Theorem~\ref{prop:embedClif}, showing that an embedding of a thin
quadric into a quadric induces an embedding of the corresponding
Clifford $\Gamma$-sets.

Recall that the Severi--Brauer variety of a central simple algebra is a
$\Gamma$-projective space. To take in account the fact that
$C(A,\sigma,f)$ is an Azumaya algebra of constant rank over the
\'etale $F$-algebra $Z(A,\sigma,f)$ we  
introduce a variant of the Severi--Brauer variety for Azumaya algebras
over quadratic \'etale algebras.\footnote{For the sake of simplicity we
  give an ad-hoc construction.}  We consider disjoint
unions $P_1\sqcup P_2$ of two 
projective $F$-spaces of the same dimension, endowed with a continuous
action of a profinite group $\Gamma$ by collineations that may
interchange the two components. We call these unions \emph{double
  $\Gamma$-projective spaces}. They are equipped with a canonical
structure map to a $\Gamma$-set $Z$ of two elements
\[
Z:=\bigl\{\{P_1\},\{P_2\}\bigr\} \leftarrow P_1\sqcup P_2.
\]
If $\dim P_1=\dim P_2=n-1$, we may define for every integer
$k\in\n$ two different sets:
\begin{multline*}
gr_{k-1}(P_1\sqcup P_2)= Gr_{k-1}(P_1)\cup Gr_{k-1}(P_2) \qquad\text{and}\\
Gr_{k-1}(P_1\sqcup P_2) = \{v_1\sqcup v_2 \mid v_1\in Gr_{k-1}(P_1),\;
v_2\in Gr_{k-1}(P_2)\}.
\end{multline*}
Note that, even though $Gr_{k-1}(P_1)$ and $Gr_{k-1}(P_2)$ are not
$\Gamma$-sets since they may be interchanged under the
$\Gamma$-action, $\Gamma$ naturally acts on $gr_{k-1}(P_1\sqcup P_2)$
and $Gr_{k-1}(P_1\sqcup P_2)$, and $gr_{k-1}(P_1\sqcup P_2)$ carries a
canonical $\Gamma$-equivariant map
\[
Z\leftarrow gr_{k-1}(P_1\sqcup P_2).
\]
If $\Gamma_0\subset\Gamma$ is the subgroup fixing $Z$, we clearly have
an isomorphism of $\Gamma_0$-sets
\[
Gr_{k-1}(P_1\sqcup P_2) = Gr_{k-1}(P_1)\times Gr_{k-1}(P_2).
\]
If $\Gamma_0\neq\Gamma$, then $Gr_{k-1}(P_1\sqcup P_2)$ may be identified
with the norm of the $\Gamma_0$-set $Gr_{k-1}(P_1)$ (or $Gr_{k-1}(P_2)$)
following Ferrand's general definition in \cite{ferrand}.

For $(Y\xleftarrow{\pi} X)$ a
$\Gamma$-covering with $\lvert Y\rvert=2$ and $\lvert X\rvert=2n$, a
\emph{general embedding}
\[
\varepsilon\colon (Y\xleftarrow{\pi} X) \hookrightarrow (Z\leftarrow
P_1\sqcup P_2)
\]
is a pair of $\Gamma$-equivariant maps $\varepsilon\colon X\to
gr_0(P_1\sqcup P_2)$, $\overline{\varepsilon}\colon Y\to Z$ such that
\begin{enumerate}
\item[(a)]
the following diagram commutes:
\[
\begin{CD}
  X @>{\varepsilon}>> gr_0(P_1\sqcup P_2)\\
  @V{\pi}VV @VVV\\
  Y @>{\overline{\varepsilon}}>> Z,
\end{CD}
\]
\item[(b)]
$\overline{\varepsilon}$ is bijective, and
\item[(c)]
the image of each fiber of $Y$ is a set of $n$ points in general
position in $P_1$ and $P_2$ respectively.
\end{enumerate}
The embedding $\varepsilon$ thus induces $\Gamma$-equivariant maps
\[
Gr_{k-1}\varepsilon\colon Gr_{k-1}(X/Y) \hookrightarrow
gr_{k-1}(P_1\sqcup P_2) \qquad\text{for all $k\in\n$.}
\]
These maps fit into commutative diagrams
\[
\begin{CD}
  Gr_{k-1}(X/Y) @>{\lambda^k(\varepsilon)}>> gr_{k-1}(P_1\sqcup P_2)\\
  @VVV @VVV\\
  Y @>{\overline{\varepsilon}}>> Z.
\end{CD}
\]
\medbreak
\par
Double $\Gamma$-projective space arise from the
following construction. Let $Z$ be a quadratic \'etale algebra over an
arbitrary field $F$, 
and let $B$ be an Azumaya $Z$-algebra of constant rank. As usual, let
$F_s$ denote a separable closure of $F$, let $Z_s=Z\otimes_FF_s$ and
$B_s=B\otimes_FF_s$ the $F_s$-algebras obtained by scalar extension,
and let $\Gamma$ be the Galois group of $F_s/F$. If $\dim_FB=2n^2$,
then $B_s\simeq M_n(F_s)\times M_n(F_s)$; its minimal right ideals
have dimension $n$. We let $\DSB(B)$ be the double $\Gamma$-projective
space whose points are the $n$-dimensional right ideals of $B_s$, with
the natural $\Gamma$-action. These ideals are divided into two
classes, which correspond to the two primitive idempotents of $Z_s$:
to each $n$-dimensional right ideal $I$, we associate the primitive
idempotent $e_I\in Z_s$ such that $e_I\cdot I=I$. Alternatively, we
associate to $I$ the element $I^\delta\in \funcX(Z)$ such that
$I^\delta(e_I)=1$. Thus, the canonical structure map of the double
$\Gamma$-projective space $\DSB(B)$ can be identified with
\[
\funcX(Z)\xleftarrow{\delta} \DSB(B).
\]

Now, let $K\xrightarrow{i} L$ be an \'etale extension of degree~$n$ of
a quadratic \'etale $F$-algebra. (Thus, $\dim_FL=2n$.) By definition,
an embedding
\[
\varepsilon\colon (L/K) \hookrightarrow (B/Z)
\]
is an $F$-algebra embedding $L\hookrightarrow B$ that restricts to an
isomorphism $\overline{\varepsilon}\colon K\iso Z$.

\begin{prop}
  \label{prop:doubleregrep}
  If, in the situation above, $L$ is a split \'etale $F$-algebra, then
  for every embedding $\varepsilon\colon (L/K)\hookrightarrow (B/Z)$
  there 
  is an $F$-algebra isomorphism $\varphi\colon B\iso\End_KL$ whose
  restriction to $Z$ is $\overline{\varepsilon}^{-1}$ such that
  $\varphi\circ\varepsilon\colon L\hookrightarrow \End_KL$ is the
  regular representation.
\end{prop}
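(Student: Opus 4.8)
The plan is to reduce to the already-established split case, Proposition~\ref{prop:splitembed}, by splitting off the two primitive idempotents of the quadratic \'etale algebra $K$ (equivalently of $Z$). Since $L$ is split, so is its subalgebra $K$, hence $K\cong Z\cong F\times F$; let $e_1$, $e_2$ be the primitive idempotents of $K\subseteq L$. Identifying $Z$ with the centre of $B$, the elements $\overline\varepsilon(e_1)$, $\overline\varepsilon(e_2)$ are the primitive central idempotents of $B$, so $B=B_1\times B_2$ with $B_j=\overline\varepsilon(e_j)B$. As $\dim_FB=2n^2$ and $B$ is Azumaya of constant rank over the quadratic \'etale algebra $Z$, each $B_j$ is a central simple $F$-algebra of degree~$n$. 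Correspondingly $L=L_1\times L_2$ with $L_j=e_jL$; because $L$ is split, each $L_j$ is a split \'etale $F$-algebra of dimension~$n$, and $\varepsilon$ restricts to $F$-algebra embeddings $\varepsilon_j\colon L_j\hookrightarrow B_j$.

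Next I would invoke Proposition~\ref{prop:splitembed} on each factor: it yields $F$-algebra isomorphisms $\varphi_j\colon B_j\iso\End_FL_j$ (and incidentally forces $\ind B_j=1$) such that $\varphi_j\circ\varepsilon_j$ is the regular representation of $L_j$. It then remains to assemble the pieces. A $K$-linear endomorphism of $L$ commutes with multiplication by $e_1$ and $e_2$, hence preserves the summands $L_1$ and $L_2$, and this gives a canonical identification $\End_KL=\End_FL_1\times\End_FL_2$. Setting $\varphi=\varphi_1\times\varphi_2\colon B\iso\End_KL$, one traces the idempotents through the construction: $\varphi$ sends the central idempotent $\overline\varepsilon(e_j)$ of $B$ to the central idempotent $e_j$ of $\End_KL$, so $\varphi|_Z=\overline\varepsilon^{-1}$; and since $\varphi\circ\varepsilon$ acts on each $L_j$ as the regular representation of $L_j$, while multiplication in $L=L_1\times L_2$ is componentwise, $\varphi\circ\varepsilon$ is exactly the regular representation of $L$ as a $K$-module.

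I do not expect a genuine obstacle: the argument is a bookkeeping reduction to Proposition~\ref{prop:splitembed}. The only point requiring a little care is to fix the labelling of the primitive idempotents of $K$, of $Z$, and of the resulting factors of $B$ and $L$ consistently (rather than up to a swap of the two components), so that the normalization $\varphi|_Z=\overline\varepsilon^{-1}$ holds on the nose.
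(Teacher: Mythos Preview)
Your argument is correct and follows the same approach as the paper: split $K\cong Z\cong F\times F$, decompose $L$, $B$, and $\varepsilon$ along the two primitive idempotents, apply Proposition~\ref{prop:splitembed} on each factor, and reassemble via the identification $\End_KL=\End_FL_1\times\End_FL_2$. The paper simply uses $\overline\varepsilon$ as an identification $K=Z=F\times F$ from the outset, whereas you track the idempotents explicitly to verify $\varphi\rvert_Z=\overline\varepsilon^{-1}$; these are the same proof.
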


\begin{proof}
  By hypothesis $K$ is split, hence $Z$ also. Using
  $\overline{\varepsilon}$ as an identification, let
  \[
  K=Z=F\times F,
  \]
  and consider the corresponding decompositions $L=L_1\times L_2$,
  $B=B_1\times B_2$, and
  $\varepsilon=(\varepsilon_1,\varepsilon_2)$. By
  Proposition~\ref{prop:splitembed}, we may find $F$-algebra
  isomorphisms $\varphi_i\colon B_i\iso\End_FL_i$, for $i=1$, $2$,
  such that $\varphi_i\circ\varepsilon_i\colon L_i\hookrightarrow
  \End_FL_i$ is the regular representation. The proposition holds with
  $\varphi=(\varphi_1,\varphi_2)\colon B\to \End_FL_1\times \End_FL_2
  = \End_KL$.
\end{proof}

Now, consider an arbitrary embedding $\varepsilon\colon (L/K)
\hookrightarrow (B/Z)$ of an \'etale extension of degree~$n$. For
every $\xi\in\funcX(L)$ we let as in\eqref{eq:epsstar}
\[
\varepsilon_*(\xi)=\{x\in B_s\mid (1\otimes\xi(\ell))\cdot x =
(\varepsilon(\ell)\otimes1)\cdot x \text{ for all $\ell\in L$}\}.
\]

\begin{prop}
  \label{prop:embedouble}
  The right ideal $\varepsilon_*(\xi)$ is $n$-dimensional, and
  $\varepsilon_*$ defines a general embedding
  \[  \varepsilon_*\colon \bigl(\funcX(K)\xleftarrow{\funcX(i)} \funcX(L)\bigr)
  \hookrightarrow \bigl(\funcX(Z)\leftarrow\DSB(B)\bigr).
  \]
  Conversely, every general embedding of a $\Gamma$-covering of
  degree~$n$ with $2$-element base in $\DSB(B)$ arises from an
  embedding of an \'etale extension in $B$.
\end{prop}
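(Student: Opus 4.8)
The plan is to prove both halves exactly as for ordinary Severi--Brauer varieties (Propositions~\ref{prop:embed} and~\ref{prop:embedconv}), using Proposition~\ref{prop:doubleregrep} to trivialize things after extending scalars to $F_s$. Write $Z_s=F_s\times F_s$ with primitive idempotents $z_1$, $z_2$; these induce compatible decompositions $L_s=L_1\times L_2$ and $B_s=B_1\times B_2$, where each $L_j$ is a split \'etale $F_s$-algebra of dimension~$n$, each $B_j$ is a split $F_s$-algebra of degree~$n$, and $\varepsilon_s=(\varepsilon_1,\varepsilon_2)$ with $\varepsilon_j\colon L_j\hookrightarrow B_j$. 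By Proposition~\ref{prop:doubleregrep} (applied over $F_s$) we may identify $B_j=\End_{F_s}L_j$ with $\varepsilon_j$ the regular representation, and $P_1\sqcup P_2=\SB(B_1)\sqcup\SB(B_2)$.

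\emph{Forward direction.} First I would check that $\varepsilon_*(\xi)$ is an $n$-dimensional right ideal lying in a single component of $B_s$. If $\xi$ factors through $L_1$, substituting $\ell=(0,1)\in L_s$ in the defining relation forces the $B_2$-component of any $x\in\varepsilon_*(\xi)$ to vanish, so $\varepsilon_*(\xi)\subset B_1$; within $B_1=\End_{F_s}L_1$ the computation in the proof of Proposition~\ref{prop:embed} identifies $\varepsilon_*(\xi)$ with $\Hom_{F_s}(L_1,e_\xi L_1)$, which is $n$-dimensional, and summing over $\xi\in\funcX(L_1)$ gives $B_1$, so these $n$ ideals are in general position in $P_1$; the same holds for $L_2$ and $P_2$. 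This gives conditions~(b) and~(c) of the definition of a general embedding. For condition~(a), note that for $\xi\in\funcX(L_1)$ the ideal $\varepsilon_*(\xi)$ is fixed by $z_1$, so $\delta\bigl(\varepsilon_*(\xi)\bigr)$ is the point of $\funcX(Z)$ singled out by $z_1$, which matches $\funcX(\overline{\varepsilon}^{-1})\bigl(\funcX(i)(\xi)\bigr)$; defining $\overline{\varepsilon_*}=\funcX(\overline{\varepsilon}^{-1})$ makes the square commute. Finally, $\Gamma$-equivariance of $\varepsilon_*$ follows \emph{verbatim} from the computation at the end of the proof of Proposition~\ref{prop:embed}, since the relation defining $\varepsilon_*(\xi)$ is intrinsic and does not use simplicity of $B$; and $\overline{\varepsilon_*}$ is $\Gamma$-equivariant as the image under $\funcX$ of an $F$-algebra isomorphism. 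The induced maps $Gr_{k-1}\varepsilon_*$ are then automatic.

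\emph{Converse.} Given a general embedding of a $\Gamma$-covering $Y\xleftarrow{\pi}X$ of degree~$n$ with $\lvert Y\rvert=2$ into $\bigl(\funcX(Z)\leftarrow\DSB(B)\bigr)$, one obtains a $\Gamma$-stable family $\{I_1,\dots,I_{2n}\}$ of $n$-dimensional right ideals of $B_s$, which by conditions~(a)--(c) splits, via $\delta$, into two blocks of $n$ ideals, each block in general position and summing to the corresponding component of $B_s$. Set $L_s=\{x\in B_s\mid xI_j\subset I_j\ \text{for }1\le j\le 2n\}$ and $L=L_s^\Gamma$. Choosing, in each component $M_n(F_s)$, a basis adapted to the direct-sum decomposition of $F_s^n$ given by that block, the computation of Proposition~\ref{prop:embedconv} shows $L_s$ is the algebra of pairs of diagonal matrices, so $L$ is \'etale of dimension~$2n$; it contains $Z=Z_s^\Gamma$ (each $z_i$ stabilizes every $I_j$), and $L_s$ is free of rank~$n$ over $Z_s$, so by Proposition~\ref{prop:etex} the extension $L/Z$ has degree~$n$. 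The inclusion $L\hookrightarrow B$ restricts to the identity on $Z$, hence is an embedding $(L/Z)\hookrightarrow(B/Z)$, and the same matrix computation gives $\varepsilon_*(\xi_j)=I_j$ under the canonical identification $\funcX(L)\cong X$, so the original embedding is $\varepsilon_*$. I expect the main obstacle to be purely a matter of bookkeeping: one must keep the non-$\Gamma$-stable data (the two blocks of ideals, the idempotents $z_1$, $z_2$, the identification $B_s=M_n(F_s)\times M_n(F_s)$) strictly on the $F_s$-level, and verify that every map actually written down over $F$ --- $\varepsilon_*$, $\overline{\varepsilon_*}$, and the induced Grassmannian maps --- is intrinsic, so that equivariance descends for free; the rest is a direct transcription of the arguments already given.
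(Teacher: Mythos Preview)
Your proposal is correct and follows precisely the approach the paper itself indicates: the paper's proof consists of the single sentence ``The proof of the first (resp.\ second) part is similar to that of Proposition~\ref{prop:embed} (resp.\ \ref{prop:embedconv}),'' and you have simply carried out that transcription in detail, using Proposition~\ref{prop:doubleregrep} to split over $F_s$ into two copies of the ordinary situation. There is nothing to add.
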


\begin{proof}
  The proof of the first (resp.\ second) part is similar to that of
  Proposition~\ref{prop:embed} (resp.\ \ref{prop:embedconv}).
\end{proof}

Proposition~\ref{prop:embedouble} applies in particular in the
situation of Theorem~\ref{prop:embedClif}: for the rest of this
subsection, suppose $A$ is a central simple $F$-algebra of degree~$2n$
with a quadratic pair $(\sigma,f)$. If $(L,\sigma_L)$ is an \'etale
$F$-algebra with involution of dimension~$2n$ and
\[
\varepsilon\colon (L,\sigma_L) \hookrightarrow (A,\sigma,f)
\]
is an embedding of algebras with involution, then
Theorem~\ref{prop:embedClif} yields an $F$-algebra embedding of the
\'etale extension $C(L,\sigma_L)/\Delta(L)$ into
$C(A,\sigma,f)/Z(A,\sigma,f)$. By Proposition~\ref{prop:embedouble}
there is a general embedding
\begin{equation}
\label{eq:Ceps}
C(\varepsilon)_*\colon \funcX C(L,\sigma_L) \hookrightarrow
\DSB\bigl(C(A,\sigma,f)\bigr).
\end{equation}
On the other hand, the embedding $\varepsilon$ also yields by
Proposition~\ref{prop:embedquad} an embedding of quadrics
\begin{equation}
  \label{eq:epstar}
  \varepsilon_*\colon\funcY(L,\sigma_L) \hookrightarrow Q(\sigma,f).
\end{equation}
To complete the picture, we now relate the Clifford $\Gamma$-set
$C\bigl(Q(\sigma,f)\bigr)$ to the Severi--Brauer double projective
space $\DSB\bigl(C(A,\sigma,f)\bigr)$.

Recall from \cite[\S8]{KMRT} that there is a
Clifford bimodule $B(A,\sigma,f)$ over $C(A,\sigma,f)$, which is
equipped with a canonical $F$-linear map $b\colon A\to B(A,\sigma,f)$,
see \cite[\S9]{KMRT}.
In the split case $A=\End_FV$, we have
$B(A,\sigma,f)=V\otimes_FC_1(V,q)$. The bilinear polar form of $q$
yields an $F$-linear isomorphism $V\to V^*$ which allows us to
identify 
\[
V\otimes_FV=V\otimes_FV^*=\End_FV;
\]
under this identification, the map $b$ carries $v\otimes w\in
V\otimes_FV$ to $v\otimes w\in V\otimes_FC_1(V,q)$.

Since $\deg A=2n$, we have
$\dim_FC(A,\sigma,f)=2^{2n-1}$. For every $2n$-dimensional isotropic
right ideal $I\subset A_s$, let $\psi(I)\subset C(A_s,\sigma_s,f_s)$
denote the right annihilator of the $F$-vector space
$\bigl(\sigma_s(I)\bigr)^b\subset B(A_s,\sigma_s,f_s)$, i.e.,
\[
\psi(I)=\{x\in C(A_s,\sigma_s,f_s)\mid \bigl(\sigma_s(I)\bigr)^b\cdot
x=0 \text{ for all $y\in I$}\}.
\]

\begin{prop}
  \label{prop:doubleClif}
  The map $\psi$ defines an embedding
  \[
  \psi\colon \bigl(\Delta(Q(\sigma,f))\leftarrow C(Q(\sigma,f))\bigr)
  \hookrightarrow \bigl(\funcX(Z(A,\sigma,f)) \leftarrow
  \DSB(C(A,\sigma,f))\bigr). 
  \]
\end{prop}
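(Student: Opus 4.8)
The plan is to reduce to the split case and then compute $\psi$ by hand in the spin representation. Since forming the Clifford algebra $C(-)$, the Clifford bimodule $B(-)$ together with the map $b$, the canonical involution, and the right annihilator of a subspace all commute with scalar extension and are functorial for the $\Gamma$-action on $A_s$, one has $\psi(\gamma I)=\gamma(\psi(I))$ for all $\gamma\in\Gamma$, so $\psi$ is $\Gamma$-equivariant; it then suffices to check, after extending scalars to $F_s$, that each $\psi(\omega)$ is a point of $\DSB(C(A,\sigma,f))$, that $\psi$ is injective and fits, together with a bijection $\overline\psi\colon\Delta(Q(\sigma,f))\to\funcX(Z(A,\sigma,f))$, into a commutative square with the two structure maps, and that the image of $\psi$ meets each projective component of $\DSB(C(A,\sigma,f))$ in a nondegenerate subvariety (the natural reading of ``general embedding'' here, since these $\Gamma$-spaces need not be finite). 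So assume $F=F_s$, write $A=\End_FV$ with $(\sigma,f)$ adjoint to a hyperbolic form $q$ on the $2n$-dimensional space $V$, and recall from \cite[\S8]{KMRT} that then $C(A,\sigma,f)=C_0(V,q)$, $Z(A,\sigma,f)=F\times F$, and $B(A,\sigma,f)=V\otimes_FC_1(V,q)$, on which $C_0(V,q)$ acts on the right by right Clifford multiplication on the second factor and $b$ sends $v\otimes w\in V\otimes_FV=\End_FV$ (the identification being via the polar form of $q$, for which $\sigma$ is the switch) to $v\otimes w\in V\otimes_FC_1(V,q)$.

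Now I compute $\psi$ on a point $\omega$ of $C(Q(\sigma,f))$, that is, on an $n$-dimensional totally isotropic subspace $W\subset V$, which corresponds to the isotropic right ideal $I_W=\Hom_F(V,W)$. Under $\End_FV=V\otimes_FV$ one has $I_W=W\otimes_FV$ and $\sigma(I_W)=V\otimes_FW$, so $(\sigma(I_W))^b=V\otimes_F\overline W$ with $\overline W\subset C_1(V,q)$ the image of $W$; since $v\mapsto v\otimes\xi$ is injective, the right annihilator of this subspace in $C_0(V,q)$ is
\[
\psi(I_W)=\{x\in C_0(V,q)\mid \overline w\,x=0\text{ in }C(V,q)\text{ for all }w\in W\}.
\]
Realize $C(V,q)=\End_F(S)$ on the spin module $S=S^+\oplus S^-$, the decomposition matching the two primitive idempotents of $Z(C_0(V,q))$, so that $C_0(V,q)=\End_F(S^+)\times\End_F(S^-)$. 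A classical computation shows that $\{s\in S\mid\overline w\,s=0\text{ for all }w\in W\}$ is a line $[s_W]$, contained in $S^+$ or in $S^-$ according to which of the two rulings of $Q(\sigma,f)$ contains $W$, i.e.\ according to $\delta(W)$. Hence $\psi(I_W)=\{x\mid\image x\subseteq[s_W]\}$ is a minimal (so $2^{n-1}$-dimensional) right ideal of one simple component of $C_0(V,q)$, i.e.\ a point of $\DSB(C(A,\sigma,f))$ whose class in $\funcX(Z(A,\sigma,f))$ depends only on $\delta(W)$. This yields the commutative square and the induced map $\overline\psi\colon\Delta(Q(\sigma,f))\to\funcX(Z(A,\sigma,f))$, which is bijective because the two rulings produce lines in the two distinct components $S^+$ and $S^-$ (it is the discriminant isomorphism of \cite[(8.10)]{KMRT}); and $\psi$ is injective since $W$ is recovered from $[s_W]$ as the largest isotropic subspace annihilating it.

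For the nondegeneracy, take a hyperbolic basis $(e_i,f_i)_{1\le i\le n}$ of $V$ and realize $S$ as $\bigwedge W'$ with $W'=Ff_1+\cdots+Ff_n$, the $f_i$ acting by exterior multiplication and the $e_i$ by contraction; then the distinguished line of the maximal isotropic subspace obtained from $\langle e_1,\dots,e_n\rangle$ by replacing $e_j$ by $f_j$ for $j$ in a subset $J$ is spanned by $\bigwedge_{j\in J}f_j$, and as $J$ runs over the subsets of a fixed parity these span $S^+$, resp.\ $S^-$. So already the images of these $2^{n-1}$ maximal isotropic subspaces span the corresponding component $\mathbb{P}(S^\pm)$ of $\DSB(C(A,\sigma,f))$, which gives the required nondegeneracy; alternatively this can be read off from Proposition~\ref{prop:embedouble} applied to the embedding $C(\varepsilon)_*$ of \eqref{eq:Ceps}, once one knows $\psi\circ C(\varepsilon_*)=C(\varepsilon)_*$ for the maps of \eqref{eq:epstar} and \eqref{eq:Ceps}.

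The main obstacle is the split-case identification in the second paragraph: one must follow the two identifications $\End_FV=V\otimes_FV$ (via the polar form) and $B(A,\sigma,f)=V\otimes_FC_1(V,q)$ through the definitions of $b$ and of the bimodule action to see that $\psi(I_W)$ is exactly the right annihilator of $W$ under left Clifford multiplication, and then to recognize this line $[s_W]$ and the simple component of $C_0(V,q)$ it determines in terms of the ruling $\delta(W)$; everything else is either formal ($\Gamma$-equivariance, compatibility with structure maps) or a routine coordinate computation.
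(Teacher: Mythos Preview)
Your proof is correct and follows the same route as the paper: reduce to the split case, identify $\psi(I_W)$ with the right annihilator of $W$ in $C_0(V,q)$, and compute it in the spin representation. The only differences are in packaging---the paper carries out the spin computation explicitly in the model $C(V,q)=\End_{F_s}(\bigwedge U)$ adapted to the given Lagrangian $U$ (obtaining $\psi(I)=\Hom_{F_s}(\bigwedge_{0}U,\bigwedge^nU)$ or $\Hom_{F_s}(\bigwedge_{1}U,\bigwedge^nU)$) and derives the bijectivity of $\overline\psi$ via Witt's theorem together with the $\Orth^+$-criterion on the center of $C_0$, whereas you invoke the pure-spinor correspondence and the known relation between rulings and half-spin components; your nondegeneracy check is extra, since here the paper's ``embedding'' only requires an injective $\Gamma$-equivariant map inducing a bijection on the base.
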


\begin{proof}
  We first show $\dim_F\psi(I)=2^{n-1}$. Let $A_s=\End_{F_s}V$ for
  some $2n$-dimensional $F_s$-vector space $V$. The quadratic pair
  $(\sigma_s,f_s)$ is adjoint to some hyperbolic quadratic form $q$ on
  $V$, and $I=\Hom_{F_s}(V,U)$ for some totally isotropic
  $n$-dimensional subspace $U\subset V$. We may therefore identify $V$
  with $U\oplus U^*$, and $q$ with the quadratic form
  $q(u,\varphi)=\varphi(u)$ for $u\in U$, $\varphi\in U^*$. Then the
  full Clifford algebra of $q$ can be identified with the algebra
  $\End_{F_s}(\bigwedge U)$ of $F_s$-linear endomorphisms of the
  exterior algebra of $U$, in such a way that the canonical map
  \[
  V=U\oplus U^*\to C(V,q)=\End_{F_s}({\textstyle\bigwedge} U)
  \]
  maps $u\in U$ to the left exterior multiplication by $u$ and
  $\varphi\in U^*$ to its associated derivation $d_\varphi$,
  see~\cite[(8.3)]{KMRT}. Let
  \[
  \textstyle{\bigwedge_0}U = \bigoplus_{\text{$i$ even}}
  {\textstyle\bigwedge^i} U\subset 
  {\textstyle\bigwedge} U \qquad\text{and}\qquad
  {\textstyle\bigwedge_1}U = 
  \bigoplus_{\text{$i$ odd}} {\textstyle\bigwedge^i} U\subset U.
  \]
  Then
  \[
  C(A_s,\sigma_s,f_s) = C_0(V,q) = \End_{F_s}({\textstyle\bigwedge_0}
  U) \times 
  \End_{F_s}({\textstyle\bigwedge_1}U)
  \]
  and
  \[
  B(A_s,\sigma_s,f_s) = V\otimes_{F_s} C_1(V,q) = V\otimes_{F_s}
  \bigl(\Hom_{F_s}({\textstyle\bigwedge_0U,\bigwedge_1}U) \times
  \Hom_{F_s}({\textstyle\bigwedge_1U,\bigwedge_0}U)\bigr).
  \]
  Under the identification $A_s=\End_{F_s}V=V\otimes_{F_s}V$ we have
  $I=U\otimes_{F_s}V$, hence
  \[
  \bigl(\sigma_s(I)\bigr)^b = V\otimes_{F_s}U \subset V\otimes_{F_s}
  C_1(V,q).
  \]
  Therefore, $\psi(I)$ consists of the linear maps
  $f\in\End_{F_s}(\bigwedge U)$ such that $f(\bigwedge_0U)\subset
  \bigwedge_0U$, $f(\bigwedge_1U)\subset \bigwedge_1U$, and
  \[
  u\wedge f(x)=0 \qquad\text{for all $u\in U$ and all
    $x\in{\textstyle\bigwedge} 
    U$.}
  \]
  The last condition implies $f(x)\in \bigwedge^nU$, hence
  \[
  \psi(I)=
  \begin{cases}
    \Hom_{F_s}(\bigwedge_0U,\bigwedge^nU) &\text{if $n$ is even,} \\
    \Hom_{F_s}(\bigwedge_1U,\bigwedge^nU) &\text{if $n$ is odd.}
  \end{cases}
  \]
  It follows that $\dim_{F_s}\psi(I)=\dim_{F_s}\bigwedge_0U =
  \dim_{F_s}\bigwedge_1U=2^{n-1}$, hence
  $\psi(I)\in\DSB\bigl(C(A,\sigma,f)\bigr)$. Moreover, for every
  nonzero $\varphi\in U^*$ we have $d_\varphi(\bigwedge^nU)\neq\{0\}$,
  hence
  \[
  \{v\in V\mid v\cdot f=0\text{ in $C(V,q)$ for all $f\in\psi(I)$}\} =
  U. 
  \]
  It follows that
  \[
  I=\{y\in A\mid \bigl(\sigma(y)\bigr)^b\cdot x = 0 \text{ for all
    $x\in \psi(I)$}\},
  \]
  hence the map $\psi$ is injective.

  It is clear from the definition that the map $\psi$ is
  $\Gamma$-equivariant. To complete the proof, it remains to show that
  $\psi$ induces an isomorphism
  \[
  \overline{\psi}\colon\Delta\bigl(Q(\sigma,f)\bigr) \iso
  X\bigl(Z(A,\sigma,f)\bigr).
  \]
  As above, we identify $A_s=\End_{F_s}V$ for some $2n$-dimensional
  $F_s$-vector space $V$. Let $I$, $I'\in C\bigl(Q(\sigma,f)\bigr)$,
  so
  \[
  I=\Hom_{F_s}(V,U), \qquad I'=\Hom_{F_s}(V,U')
  \]
  for some totally isotropic $n$-dimensional subspaces $U$, $U'\subset
  V$. By a theorem of Witt, there is an orthogonal transformation
  $g\in\Orth(V,q)$ such that $g(U)=U'$. This transformation induces an
  automorphism $\hat g$ of $C_0(V,q)$ such that $\hat
  g\bigl(\psi(I)\bigr)=\psi(I')$. If $e$ (resp.\ $e'$) is the
  primitive central idempotent of $C_0(V,q)$ such that
  $e\cdot\psi(I)=\psi(I)$ (resp.\ $e'\cdot \psi(I')=\psi(I')$), we
  therefore have $\hat g(e)=e'$. Now, we have $\dim(U\cap U')\equiv
  n\bmod2$ if and only if $g\in\Orth^+(V,q)$ by \cite[\S\S~II.6,
  II.10]{Dieu}, and this condition holds if and only if the
  restriction of $\hat g$ to the center of $C_0(V,q)$ is the identity,
  by \cite[(13.2)]{KMRT}. Therefore, $e=e'$ if and only if the ideals
  $I$ and $I'$ belong to the same class of
  $C\bigl(Q(\sigma,f)\bigr)$. This completes the proof.
\end{proof}

\begin{cor}
  If $(\sigma,f)$ is a hyperbolic quadratic pair on $A$, then the
  Clifford algebra $C(A,\sigma,f)$ decomposes into a direct product of
  simple $F$-algebras $C(A,\sigma,f)\simeq C_1\times C_2$, and one of
  the factors $C_1$, $C_2$ is split.
\end{cor}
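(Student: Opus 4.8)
The plan is to invoke the $\Gamma$-equivariant embedding $\psi$ of Proposition~\ref{prop:doubleClif}, together with the observation that a hyperbolic quadratic pair furnishes an $F$-rational point of the Clifford $\Gamma$-set of the associated quadric.

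First I would recall that, since $(\sigma,f)$ is hyperbolic, $A$ contains a right ideal $I$ defined over $F$, isotropic for $(\sigma,f)$, with $\dim_F I=\tfrac12\dim_F A$ (see \cite[\S6]{KMRT}); over $F_s$ it becomes $\Hom_{F_s}(V,U)$ for a maximal totally isotropic subspace $U\subset V$, so $I_s$ is a point of $C(Q(\sigma,f))$. Being defined over $F$, this point is fixed by $\Gamma$, hence $\psi(I_s)$ is a $\Gamma$-fixed, i.e.\ $F$-rational, point of the double $\Gamma$-projective space $\DSB(C(A,\sigma,f))$.

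Next I would push $\psi(I_s)$ forward along the canonical structure map $\funcX(Z(A,\sigma,f))\xleftarrow{\delta}\DSB(C(A,\sigma,f))$: its image is a $\Gamma$-fixed point of $\funcX(Z(A,\sigma,f))$, hence an $F$-algebra homomorphism $Z(A,\sigma,f)\to F$, and since $Z(A,\sigma,f)$ is quadratic \'etale this forces $Z(A,\sigma,f)\cong F\times F$. (Equivalently, $\delta(I_s)\in\Delta(Q(\sigma,f))$ is $\Gamma$-fixed, so $\Gamma$ acts trivially on $\Delta(Q(\sigma,f))\cong\funcX(Z(A,\sigma,f))$ through the isomorphism $\overline{\psi}$ of Proposition~\ref{prop:doubleClif}.) Writing $Z(A,\sigma,f)=F\times F$, the Azumaya $Z(A,\sigma,f)$-algebra $C(A,\sigma,f)$ decomposes as a product $C_1\times C_2$ of central simple $F$-algebras.

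Finally, since $Z(A,\sigma,f)$ is split the Galois group does not interchange the two components of $\DSB(C(A,\sigma,f))$, so $\DSB(C(A,\sigma,f))=\SB(C_1)\sqcup\SB(C_2)$ as $\Gamma$-projective spaces, and $\psi(I_s)$, being a minimal right ideal of $C(A_s,\sigma_s,f_s)$, lies in exactly one of the two components, say $\SB(C_1)$. Thus $\SB(C_1)$ has an $F$-rational point, which forces $C_1$ to be split. The only point needing care is the bookkeeping showing that $\psi$ carries $I_s$ into the component of $\DSB(C(A,\sigma,f))$ attached to a single factor $C_i$, and that, once $Z(A,\sigma,f)$ is split, that component is indeed $\SB(C_i)$; both follow at once from the explicit descriptions of $\psi$ and of the structure map in the proof of Proposition~\ref{prop:doubleClif}.
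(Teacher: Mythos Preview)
Your proof is correct and follows essentially the same route as the paper's: produce a $\Gamma$-fixed maximal isotropic ideal $I$ from the hyperbolicity hypothesis, apply $\psi$ to obtain an $F$-rational point on $\DSB\bigl(C(A,\sigma,f)\bigr)=\SB(C_1)\sqcup\SB(C_2)$, and conclude that the corresponding factor is split. The only minor difference is that the paper invokes directly the known fact that a hyperbolic quadratic pair has trivial discriminant to get $Z(A,\sigma,f)\simeq F\times F$, whereas you deduce it from the $\Gamma$-fixed point via the structure map $\delta$; both are fine.
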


\begin{proof}
  If $(\sigma,f)$ is hyperbolic, then its discriminant is trivial,
  hence $Z(A,\sigma,f)\simeq F\times F$ and $C(A,\sigma,f)$ decomposes
  into a direct product of simple $F$-algebras as above. Moreover,
  there is a maximal isotropic linear variety in $Q(\sigma,f)$ that is
  defined over $F$, hence fixed under the action of $\Gamma$. Its
  image under $\psi$ is a point on
  $\DSB\bigl(C(A,\sigma,f)\bigr)=\SB(C_1)\sqcup \SB(C_2)$ fixed under
  the action of $\Gamma$. Its existence shows that $C_1$ or $C_2$ is
  split. 
\end{proof}

\begin{rem}
  This corollary was first proved by D.C. Van Drooge and H.P. Allen,
  see also Tits \cite[Proposition~8]{Tits68}.
\end{rem}

We may now relate the maps \eqref{eq:Ceps} and
\eqref{eq:epstar} associated to an embedding $\varepsilon$ of algebras
with involution:

\begin{prop}
  The following diagram is commutative:
  \begin{equation} \label{eq:diagrammclifford}
  \begin{CD}
    C\funcY(L,\sigma_L) @>{C(\varepsilon_*)}>> C\bigl(Q(\sigma,f)\bigr)\\
    @| @VV{\psi}V\\
    \funcX C(L,\sigma_L) @>{C(\varepsilon)_*}>>
    \DSB\bigl(C(A,\sigma,f)\bigr). 
  \end{CD}
  \end{equation}
\end{prop}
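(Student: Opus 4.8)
The plan is to check commutativity of \eqref{eq:diagrammclifford} by evaluating both composites on an arbitrary point of $C\funcY(L,\sigma_L)$. Since the four objects are $\Gamma$-sets and the four maps are $\Gamma$-equivariant, it suffices to prove that the two composites agree as maps of underlying sets, hence after extension of scalars to $F_s$; by Proposition~\ref{prop:splitinvembed} I may then assume $A_s=\End_{F_s}(L_s)$, that $(\sigma_s,f_s)$ is adjoint to the quadratic form $t(x)=T_0\bigl(\sigma_{L_s}(x)x\bigr)$ of Example~\ref{ex:embedinvo}, that $\varepsilon_s$ is the regular representation, and hence, as in the proof of Theorem~\ref{prop:embedClif}, that $C(A_s,\sigma_s,f_s)=C_0(L_s,t)$.

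Fix a point $\omega$ of $C\funcY(L,\sigma_L)$; over $F_s$ it is a section of the covering $\funcX(L_{0,s})\leftarrow\funcX(L_s)$, say $\omega=\{\zeta_1,\dots,\zeta_n\}$. Let $\bar e_1,\dots,\bar e_n$ be the primitive idempotents of $L_s$ on which $\zeta_1,\dots,\zeta_n$ take the value $1$, put $\bar e_i^{\perp}=\sigma_{L_s}(\bar e_i)$, and set $W_\omega=\bigoplus_iF_s\bar e_i$ and $W_\omega^{\perp}=\bigoplus_iF_s\bar e_i^{\perp}$. First I would record the elementary facts that $W_\omega$ and $W_\omega^{\perp}$ are complementary maximal totally $t$-isotropic subspaces of $L_s$ and that $(\bar e_i)_i$, $(\bar e_i^{\perp})_i$ are dual bases for the polar form $b_t$ (the only point being that $T_0$ of a primitive idempotent of $L_{0,s}$ equals $1$). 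With these, the data $L_s=W_\omega\oplus W_\omega^{\perp}$, $t(u,\varphi)=\varphi(u)$, $C_0(L_s,t)=\End_{F_s}(\bigwedge_0W_\omega)\times\End_{F_s}(\bigwedge_1W_\omega)$ is precisely the model used in the proof of Proposition~\ref{prop:doubleClif} with $U=W_\omega$. By the description of $\varepsilon_*$ in the proof of Proposition~\ref{prop:embed} we have $\varepsilon_*(\zeta_i)=\Hom_{F_s}(L_s,F_s\bar e_i)$, so the top-then-right composite sends $\omega$ to $C(\varepsilon_*)(\omega)=\Hom_{F_s}(L_s,W_\omega)$ and then, by the computation of that proof, to $\psi\bigl(\Hom_{F_s}(L_s,W_\omega)\bigr)=\Hom_{F_s}(\bigwedge_\epsilon W_\omega,\bigwedge^n W_\omega)$, where $\epsilon\equiv n\pmod 2$.

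For the left-then-bottom composite, $\omega$ is identified, via the equality on the left of \eqref{eq:diagrammclifford}, with the primitive idempotent $g_\omega$ of $C(L_s,\sigma_{L_s})$, which by the construction recalled before Proposition~\ref{prop:prodetale} is the $\Sym_n$-symmetrization of $\bar e_1\otimes\cdots\otimes\bar e_n$ in $(L_s^{\otimes n})^{\Sym_n}$. Next I would expand $\prod_{i=1}^nc_L(\bar e_i)$ from formula \eqref{def:omegaT}: orthogonality of the $\bar e_i$ kills every term in which two factors fall in the same tensor slot, leaving $\varepsilon^{\otimes n}(e_n^{L_0})\cdot\sum_{\tau\in\Sym_n}\bar e_{\tau(1)}\otimes\cdots\otimes\bar e_{\tau(n)}=g_\omega$, the prefactor acting as the identity because $\zeta_1,\dots,\zeta_n$ lie in distinct fibres. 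Hence, by commutativity of \eqref{diag:embedClif}, $C(\varepsilon_s)(g_\omega)=\prod_{i=1}^nc_A\bigl(\varepsilon_s(\bar e_i)\bigr)$. Under $\End_{F_s}(L_s)=L_s\otimes_{F_s}L_s$ (identified via $b_t$) the rank-one idempotent $\varepsilon_s(\bar e_i)$ is $\bar e_i\otimes\bar e_i^{\perp}$, so $c_A\bigl(\varepsilon_s(\bar e_i)\bigr)=\bar e_i\cdot\bar e_i^{\perp}$ in $C_0(L_s,t)$; in the exterior model this is the composite of left exterior multiplication by $\bar e_i$ with the derivation $d_{\bar e_i^{*}}$, and a short sign computation shows it is the idempotent acting as the identity on every basis monomial $\bar e_S$ with $i\in S$ and as $0$ on those with $i\notin S$. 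Therefore $\prod_ic_A\bigl(\varepsilon_s(\bar e_i)\bigr)$ is the projection of $\bigwedge W_\omega$ onto the line $F_s\,\bar e_1\wedge\cdots\wedge\bar e_n=\bigwedge^nW_\omega$, and, by the formula defining $C(\varepsilon)_*$ as in \eqref{eq:epsstar}, $C(\varepsilon)_*(\omega)=C(\varepsilon_s)(g_\omega)\cdot C_0(L_s,t)=\Hom_{F_s}(\bigwedge_\epsilon W_\omega,\bigwedge^n W_\omega)$, which is the right ideal obtained from the other composite. Commutativity follows.

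I expect the difficulty to be organisational rather than conceptual: one must force the split model of Proposition~\ref{prop:splitinvembed} (built from $t$ and the idempotents of $L_s$) to coincide literally with the exterior-algebra model of Proposition~\ref{prop:doubleClif} (built from a maximal isotropic subspace $U$), which is what dictates the choice $U=W_\omega$ and the identification $W_\omega^{\perp}=W_\omega^{*}$ through $b_t$, and then compute $c_A$, $c_L$, $\psi$ and $C(\varepsilon_s)$ all in that one model, tracking the signs in the composite of exterior multiplication and contraction and checking that $g_\omega=\prod_ic_L(\bar e_i)$ for the Clifford algebra of \cite{knustignol}. Once these identifications are pinned down the two computations agree term by term.
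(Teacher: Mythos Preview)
Your proposal is correct and follows essentially the same route as the paper: reduce to the split situation via Proposition~\ref{prop:splitinvembed}, label the primitive idempotents of $L_s$ according to a chosen section $\omega$, and compute both composites inside $C_0(L_s,t)$. The only differences are bookkeeping. The paper does not compute the primitive idempotent $g_\omega$ explicitly; instead it characterises $C(\varepsilon)_*(\xi_\omega)$ by the conditions $e_ie'_ix=x$ and $e'_ie_ix=0$ coming from the generators $c_L(e_i)$, $c_L(e'_i)$, and then matches this with $\psi\bigl(C(\varepsilon_*)(\omega)\bigr)=\{x\mid e_ix=0\}$ via the Clifford relations $e_i^2=0$, $e_ie'_i+e'_ie_i=1$ (hence $e_ie'_ie_i=e_i$), without ever passing to the exterior model. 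Your version packages the same information by computing $g_\omega=\prod_ic_L(\bar e_i)$, pushing it to the rank-one projector $\prod_i\bar e_i\bar e_i^{\perp}$ onto $\bigwedge^nW_\omega$, and reading off the right ideal in the exterior model of Proposition~\ref{prop:doubleClif}; this is a slightly more hands-on variant of the same calculation. One small point you might make explicit: the step ``$C(\varepsilon)_*(\omega)=C(\varepsilon_s)(g_\omega)\cdot C_0(L_s,t)$'' is not literally the formula \eqref{eq:epsstar} but the standard consequence that for a split \'etale algebra the defining conditions reduce to $\varepsilon(g_\omega)x=x$.
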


\begin{proof}
  Extending scalars, we reduce to the case where $L$ is split,
  $A=\End_FL$, and $\varepsilon$ is the regular representation as in
  Example~\ref{ex:embedinvo} (see
  Proposition~\ref{prop:splitinvembed}). Let $\omega\in
  C\funcY(L,\sigma_L)$, and let $e_1$, \ldots, $e_n$, $e'_1$, \ldots,
  $e'_n$ be the primitive idempotents of $L$, labelled in such a way
  that $\sigma(e_i)=e'_i$ for $i\in\n$ and $\xi(e_i)=1$, $\xi(e'_i)=0$
  for $\xi\in\omega$. Under the identification $C\funcY(L,\sigma_L) =
  XC(L,\sigma_L)$, the corresponding $F$-algebra homomorphism
  $\xi_\omega\colon C(L,\sigma_L)\to F$ satisfies
  \[
  \xi_\omega\bigl(c_L(e_i)\bigr) = 1 \quad\text{and}\quad
  \xi_\omega\bigl(c_L(e'_i)\bigr)=0\quad\text{for all $i\in\n$.}
  \]
  In $C(A,\sigma,f)=C_0(L,t)$ we have $\varepsilon(e_i)=e_ie'_i$ and
  $\varepsilon(e'_i)=e'_ie_i$ for $i\in\n$, hence
  \[
  C(\varepsilon)_*(\xi_\omega)= \{x\in
  C_0(L,t)\mid e_ie'_ix=x\text{ and }e'_ie_ix=0\text{ for all
  $i\in\n$}\}. 
  \]
  On the other hand, $C(\varepsilon_*)(\omega) =
  \Hom_F(L,\bigoplus_{i\in\n} e_iL)$, hence
  \[
  \psi\bigl(C(\varepsilon_*)(\omega)\bigr) = \{x\in C_0(L,t) \mid
  e_i\cdot x=0 \text{ for $i\in\n$}\}.
  \]
  In $C(L,t)$, we have $e_i^2=0$ and $e_ie'_i+e'_ie_i=1$,
  hence $e_ie'_ie_i=e_i$ for $i\in\n$. Therefore, if $e'_ie_ix=0$,
  then $e_ix=0$. Conversely, if $e_ix=0$, then $e'_ie_ix=0$ and
  \[
  x=(e_ie'_i+e'_ie_i)x=e_ie'_ix.
  \]
  Therefore, $\psi\bigl(C(\varepsilon_*)(\omega)\bigr) =
  C(\varepsilon)_*(\omega)$. 
\end{proof}

\section{Dynkin diagrams and Severi--Brauer varieties}
\label{sec:Dynkin}

The projective linear group $\PGL_n$ has Dynkin diagram of type
$A_{n-1}$ and the group $\Sym_n$ 
is its Weyl group. The special orthogonal group  $\PGO^+_{2n}$, with Weyl group
$ \Sym_2^{n-1} \rtimes \Sym_n$, has Dynkin diagram of type $D_n$.
Accordingly we call groupoids occurring in Proposition~\ref{prop:SBequiv}
and \eqref{eq:anteq} \emph{groupoids of type $A_{n-1}$},
and groupoids occurring in \eqref{eq:QEtexQCov} and
Proposition~\ref{prop:QCSAQuad}\emph{ groupoids of type $D_{n}$}. Many
properties of these groupoids are related to properties of 
the corresponding Dynkin diagrams. After introducing oriented quadrics 
and \'etale algebras,
we briefly consider two examples: the equivalence $A_3 =D_3$ and triality.

\subsection{Special quadrics}

A \emph{special or oriented $\Gamma$-quadric} is a pair $(Q,\partial)$,
where $Q$ is $\Gamma$-quadric (or a thin $\Gamma$-quadric) and $\partial$
is a fixed isomorphism of
$\Gamma$-sets $\boldsymbol{2} \liso \Delta(Q)$.
In particular the
$\Gamma$-action on $\Delta(Q)$ is trivial. There are two possible
choices for $\partial$. A choice is an \emph{orientation of\,~$Q$}.
If $Q$ is oriented we may use the given isomorphism 
$\partial\colon  \boldsymbol 2 \liso \Delta(Q)$ to decompose the Clifford
$\Gamma$-set:
\[
 C(Z/Z_0 ) = C_1(Z/Z_0,\partial) \sqcup C_2(Z/Z_0,\partial) 
\]
where
\[
  C_i(Z/Z_0,\partial) =\{\omega\in C(Z/Z_0)\mid
  \omega^{\delta\partial}=i\}\qquad\text{for $i=1$, $2$}.
\]
  
Similarly, we define \emph{oriented extensions of \'etale
  $F$-algebras} as pairs 
$(L/L_0,\partial)$ where $L/L_0$ is an extension of \'etale $F$-algebras
and $\partial$ is an orientation of $L$, i.e., a fixed isomorphism
$\partial \colon \Delta(L) \iso F\times F$.
We also have 
\emph{oriented central simple algebras with quadratic pairs}, defined
as $4$-tuples $(A,\sigma,f, \partial)$, where $\partial$ is a fixed
isomorphism $\partial\colon Z(A,\sigma, f) \iso F\times F$.
In the corresponding decomposition
\[
 C(A,\sigma,f, \partial) = 
 C_1(A,\sigma,f, \partial)  \times C_2(A,\sigma,f, \partial), 
\]
$C_1(A,\sigma,f, \partial) $ and $C_2(A,\sigma,f, \partial)$ 
are central simple algebras over $F$. The classes of morphisms
considered in the previous sections can be restricted to preserve the
orientations. For example an embedding of oriented quadrics 
\[
\varepsilon \colon (Y, \sigma, \partial_Y) \hookrightarrow (Q,\partial_Q)
\]
is an embedding $\varepsilon \colon Y \hookrightarrow Q$ such that  
$\Delta(\varepsilon)\circ\partial_Q = \partial_Y$.  The
diagram~\eqref{eq:diagrammclifford} restricts to a commutative diagram  
  \[
  \begin{CD}
    C_1\funcY (L/L_0,\partial_L) @>{C_1(\varepsilon_*)}>>
    C_1\bigl(Q(\sigma,f),\partial_Q\bigr)\\ 
    @| @VV{\psi_1}V\\
    \funcX C_1(L/L_0,\partial_L) @>{C_1(\varepsilon)_*}>>
    \SB\bigl(C_1(A,\sigma,f,\partial)\bigr). 
  \end{CD}
 \]
As in~\ref{prop:SBequiv}, \ref{eq:anteq},
\ref{eq:QEtexQCov} and~\ref{prop:QCSAQuad}, we have
anti-equivalences of groupoids 
\begin{equation*} 
\OQCSA_F^{2n} \equiv \OQuad_\Gamma^{2(n-1)}
\quad \text{and} \quad \OQEtex^n_F \equiv \OQCov^n_\Gamma,
\end{equation*}
using the following notations:
\begin{enumerate}
\item $\OQCSA_F^{2n}$ for the groupoid of oriented central simple algebras over $F$ of degree $2n$  with quadratic pairs, 
\item $\OQuad_\Gamma^{2(n-1)}$ for the groupoid of oriented
  $\Gamma$-quadrics of dimension $2(n-1)$ over~$F$,
\item  $\OQEtex^n_F $ for the groupoid of oriented quadratic \'etale
  extensions $L/L_0$ of $F$-algebras with $\dim_FL_0 =n$. 
\item $\OQCov^n _\Gamma$ for the groupoid of oriented thin $\Gamma$-quadrics of
dimension $2(n-1)$ over~$F$, i.e., double $\Gamma$-coverings $Y/Y_0$
with $\lvert Y_0\rvert=n$.
\end{enumerate} 

Isomorphism classes are in bijection with $H^1(\Gamma, \PGO^+_{2n})$,
resp.\ $H^1(\Gamma, \Sym_2^{n-1}\rtimes \Sym_n)$.

\subsection{The exceptional isomorphism $A_3 =D_3$}

The fact that the Dynkin diagrams $A_3$ and  $D_3$ are identical 
can be interpreted at the level of the corresponding groupoids. For
any $4$-element $\Gamma$-set $X$, the $\Gamma$-set $Gr_1(X)$ of
$2$-element subsets of $X$ is equipped with a canonical involution
$\sigma$, which maps each $2$-element subset of $X$ to its
complement. Thus, it defines a functor
$\Set_\Gamma^4\to\QCov_\Gamma^3$, which we also designate by
$Gr_1$. The sections $\{S_1,S_2,S_3\}\in C\big(Gr_1(X),\sigma\big)$ (where
$S_1$, $S_2$, $S_3$ are $2$-element subsets of $X$ such that
$\{\sigma(S_1),\sigma(S_2),\sigma(S_3)\}\cap\{S_1,S_2,S_3\}=\emptyset$)
naturally fall into two classes, depending on whether $S_1\cap S_2\cap
S_3$ is empty or is a $1$-element subset of $X$. Therefore, there is a
natural orientation on $Gr_1(X)$, and we may view $Gr_1$ as a functor
$\Set_\Gamma^4\to\OQCov_\Gamma^3$. 

The analogue of $Gr_1$ for \'etale algebras under the Grothendieck
correspondence is an ``exterior algebra'' functor $\lambda^2\colon
\Et_F^4\to \OQEtex^3_F$ studied in \cite{knustignol}. There is also an
``exterior algebra'' functor $\lambda^2\colon\CSA_F^4\to \OQCSA_F^6$
discussed in \cite[\S15D]{KMRT}.

\begin{prop}
The functors $Gr_1$ (resp.\ $\lambda^2$) and $C_1$ induce equivalences:
\begin{align*}
\xymatrix@1@M=8pt{\Set_\Gamma^4 \ar@<0.5ex>[r]^{Gr_1}& \OQCov^3_\Gamma,
  \ar@<0.5ex>[l]^{C_1}} &\qquad
\xymatrix@1@M=8pt{\Et_F^4 \ar@<0.5ex>[r]^{\lambda^2}& \OQEtex^3_F,
  \ar@<0.5ex>[l]^{C_1}}\\
\xymatrix@1@M=8pt{\Proj_\Gamma^3 \ar@<0.5ex>[r]^{Gr_1}& \OQuad^4_\Gamma,
  \ar@<0.5ex>[l]^{C_1}} &\qquad
\xymatrix@1@M=8pt{\CSA_F^4 \ar@<0.5ex>[r]^{\lambda^2}& \OQCSA^6_\Gamma.
  \ar@<0.5ex>[l]^{C_1}}
\end{align*}
Moreover these equivalences are compatible with general embeddings.
\end{prop}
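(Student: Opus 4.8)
The plan is to deduce all four equivalences from one input, namely the coincidence of the Dynkin diagrams $A_3$ and $D_3$, in its two concrete incarnations: the equality of Weyl groups $\Sym_4\cong\Sym_2^2\rtimes\Sym_3$ (which governs the rows with $\Set_\Gamma^4$ and $\Et_F^4$) and the exceptional isomorphism of algebraic groups $\PGL_4\iso\PGO^+_6$ (which governs the rows with $\Proj_\Gamma^3$ and $\CSA_F^4$). Each of the four rows is a pair of groupoids of $F$-forms of a fixed split object, with isomorphism classes computed by $H^1(\Gamma,G)$ for the relevant $G\in\{\Sym_4,\ \Sym_2^2\rtimes\Sym_3,\ \PGL_4,\ \PGO^+_6\}$ (by Propositions~\ref{prop:SBequiv} and~\ref{prop:QCSAQuad}, by the Grothendieck correspondence~\eqref{eq:anteq}, by~\eqref{eq:QEtexQCov}, and by the oriented refinements established above). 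I would invoke the elementary criterion that a functor between two such form-groupoids is an equivalence as soon as it induces a bijection on isomorphism classes together with an isomorphism on the automorphism group of every object, and that by the standard twisting/descent formalism both conditions need only be checked on the split object. So in each row it will be enough to: (i)~verify that $Gr_1$ (resp.\ $\lambda^2$) lands in the asserted target groupoid; (ii)~compute the map it induces on automorphisms of the split object and recognise it as the group isomorphism above; and (iii)~produce a natural isomorphism $C_1\circ Gr_1\cong\Id$ (resp.\ $C_1\circ\lambda^2\cong\Id$), which then forces $C_1$ to be a quasi-inverse.

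\textbf{The combinatorial row.}
I would first treat $Gr_1\colon\Set_\Gamma^4\to\OQCov_\Gamma^3$ in full, as the model for the rest. For a $\Gamma$-set $X$ with $|X|=4$, the $\Gamma$-set $Gr_1(X)$ of $2$-element subsets has $6$ elements, the complementation map $\sigma$ is a fixed-point-free involution, $Gr_1(X)/\sigma$ has $3$ elements, and the partition of the sections of $Gr_1(X)$ into \emph{stars} $\{S:i\in S\}$ (singleton triple intersection) and \emph{triangles} (empty triple intersection) is $\Gamma$-stable and supplies the canonical orientation; this is step~(i). Taking $C_1$ to be the class of stars, the rule ``star at $i$''$\,\mapsto i$ is a canonical $\Gamma$-isomorphism $C_1(Gr_1(X),\sigma)\iso X$, which is step~(iii). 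For (ii) I would check, on the split object $\boldsymbol4$, that $Gr_1$ identifies $\Sym_4=\Aut(\boldsymbol4)$ with $\Aut(Gr_1(\boldsymbol4),\sigma,\partial)$, the stabiliser of $\partial$ inside the hyperoctahedral group $\Sym_2^3\rtimes\Sym_3$, i.e.\ $\Sym_2^2\rtimes\Sym_3$; the induced map on isomorphism classes is then the bijection $H^1(\Gamma,\Sym_4)\cong H^1(\Gamma,\Sym_2^2\rtimes\Sym_3)$ attached to this isomorphism, and the criterion applies. As a direct cross-check one can also see $Gr_1\circ C_1\cong\Id$: writing an oriented thin $D_3$-quadric as $Y=\{a_1,b_1,a_2,b_2,a_3,b_3\}$, the chosen discriminant class $W=C_1$ consists of four sections any two of which agree in exactly one coordinate, and sending a pair of distinct members of $W$ to their common value yields a canonical $\Gamma$-equivariant bijection $Gr_1(W)\iso Y$ compatible with coverings, involutions and orientations.

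\textbf{The geometric row and its algebraic avatars.}
For $\lambda^2\colon\CSA_F^4\to\OQCSA_F^6$ I would invoke \cite[\S15D]{KMRT}: for $A=\End_FV$ with $\dim V=4$ the algebra $\lambda^2A$ is identified with $\End_F(\Lambda^2V)$, its quadratic pair adjoint to the pairing $\Lambda^2V\times\Lambda^2V\to\Lambda^4V$ and its orientation fixed by a trivialisation of $\Lambda^4V$, and its even Clifford algebra is $\End_FV\times\End_FV^*$; this gives $C_1(\lambda^2A)\cong A$ (step~(iii)), and $\lambda^2$ realises the exceptional isomorphism $\PGL_4\iso\PGO^+_6$ on automorphisms of the split object (step~(ii)), the geometric picture being the Klein quadric $Gr_1(\mathbb P^3)$ with its two rulings, each a copy of $\mathbb P^3$. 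The criterion again yields that $\lambda^2$ is an equivalence with quasi-inverse $C_1$. The two remaining rows, $\lambda^2\colon\Et_F^4\to\OQEtex_F^3$ and $Gr_1\colon\Proj_\Gamma^3\to\OQuad_\Gamma^4$, I would then obtain from the two already settled by transporting along the anti-equivalences $\Set_\Gamma\equiv\Et_F$, $\OQCov_\Gamma\equiv\OQEtex_F$, $\CSA_F\equiv\Proj_\Gamma$ and $\OQCSA_F\equiv\OQuad_\Gamma$ recorded in~\eqref{eq:anteq}, Proposition~\ref{prop:SBequiv},~\eqref{eq:QEtexQCov}, Proposition~\ref{prop:QCSAQuad} and their oriented versions, using that under these anti-equivalences $Gr_1$ corresponds to $\lambda^2$ and the two flavours of $C_1$ correspond to one another --- which is exactly how $\lambda^2$ is set up for \'etale algebras in \cite{knustignol} and for central simple algebras in \cite[\S15D]{KMRT}.

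\textbf{Embeddings, and the expected difficulty.}
For compatibility with general embeddings I would argue as follows. A general embedding $\varepsilon\colon X\hookrightarrow P$ of a $4$-element $\Gamma$-set into a $\Gamma$-projective $3$-space already induces, by the definition of general embedding, a $\Gamma$-equivariant injection $Gr_1\varepsilon\colon Gr_1(X)\hookrightarrow Gr_1(P)$; one checks it carries isotropic linear subvarieties to isotropic ones and respects orientations (both read off from triple intersections, resp.\ from the class of $\omega\wedge\omega$), so it is an embedding of oriented thin quadrics into oriented quadrics, and symmetrically for $\lambda^2$ of an algebra embedding $L\hookrightarrow A$. That the equivalences carry these induced embeddings to the Clifford-side embeddings $C(\varepsilon)$ and $C(\varepsilon)_*$ of Theorem~\ref{prop:embedClif} and Proposition~\ref{prop:embedouble} is then precisely the commutativity of diagram~\eqref{eq:diagrammclifford} together with its oriented refinement. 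The part I expect to be most delicate is not conceptual but bookkeeping: keeping the orientation conventions coherent across all four rows so that ``$C_1$'' consistently names matching halves (points rather than planes; the ruling attached to $V$ rather than to $V^*$), and verifying that the isomorphisms $C_1\circ Gr_1\cong\Id$ and $C_1\circ\lambda^2\cong\Id$ genuinely respect the orientations instead of interchanging them. This is exactly the place where the orientation is indispensable: it rigidifies $\PGO_6$ to $\PGO^+_6$ (respectively the hyperoctahedral group to its type-$D_3$ reflection subgroup), turning the exceptional isogeny into an honest isomorphism.
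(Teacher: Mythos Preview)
Your proposal is correct and follows essentially the same route as the paper: the paper's proof consists entirely of citations, referring the first two (equivalent) rows to \cite{knustignol} and the last row to \cite[(15.32)]{KMRT}, with the third row declared equivalent to the last via the anti-equivalence $\CSA_F\equiv\Proj_\Gamma$. Your sketch unpacks what those references contain---the star/triangle combinatorics for $Gr_1$ on a $4$-set, the Klein quadric picture for $\lambda^2$ on degree-$4$ algebras, and the transport along the Grothendieck and Severi--Brauer anti-equivalences---so you are supplying the substance behind the citations rather than taking a different path.
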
  

\begin{proof}
The (equivalent) first two cases are discussed in \cite{knustignol},
the third is equivalent to the last and the last follows from
\cite[(15.32)]{KMRT}. 
\end{proof}

\begin{remark}
The equivalence between groupoids of thin objects of type $A_3$ and  of type
$D_3$  is induced at the cohomological level  by the well-known
isomorphism
$\Sym_4 \isom \Sym_2^2 \rtimes \Sym_3 $. Classically, for any
separable field extension $E$ of degree $4$, the subalgebra of
$\lambda^2E$ fixed under the involution is the \emph{cubic resolvent}
(see \cite{knustignol}).  On
the geometric side, the quadric $Gr_1(P)$ can be viewed as a
$4$-dimensional quadric
parametrised by a $3$-dimensional projective space. This quadric is
known as \emph{Klein's quadric}.
\end{remark}

\subsection{Triality}
All Dynkin diagrams but one admit at most
automorphisms of 
order two, which are related to duality in algebra and geometry.
The Dynkin diagram 
of $D_4$:\\[1ex]
\[
\unitlength =0.5ex
\begin{picture}(-10,0)%
        \put(11,7){$\vcenter{\hbox{$\scriptstyle \alpha _3$}}$}%
        \put(11,-7){$\vcenter{\hbox{$\scriptstyle \alpha_ 4
\phantom{\scriptstyle -1}$}}$}%
\end{picture}%
\vcenter{\hbox{\begin{picture}(0,0)%
      \put(0,0){\circle{2}}%
      \put(0,-5){\hcenter{$\scriptstyle \alpha _1$}}
        \put(1,0){\line(1,0){10}}%
        \put(12,0){\circle{2}}%
        \put(12,-5){\hcenter{$\scriptstyle \alpha_ 2$}}%
                \put(19,7){\circle{2}}%
        \put(19,-7){\circle{2}}%
      \put(12.71,0.71){\line(1,1){5.58}}
        \put(12.71,-0.71){\line(1,-1){5.58}}
              \end{picture}}}
\]\\[0.1ex]
is special, in the sense that it admits automorphisms of order $3$.
Algebraic and geometric objects related to $D_4$ are of particular
interest as they also usually admit exceptional automorphisms of order
$3$, which are called trialitarian. This is the case for the four groupoids
$\OQCov^4_\Gamma$, $\OQEtex_F^4$, $\OQuad^6_\Gamma$ and 
$\OQCSA^8_F$ of type $D_4$. 
For $(E, \partial_E)$ in  $\OQEtex_F^4$ or in 
$\OQCSA_F^8$,  the objects $C_1(E,\partial_E)$ and
$C_2(E,\partial_E)$ belong to  $\QEtex_F^4$, resp.\ to
$\QCSA^8_F$ (see~\cite{KT09}, resp. \cite[\S35]{KMRT}).
 Corresponding results  hold for 
objects in $\OQCov^4_\Gamma$ or $\OQuad^6_\Gamma$ since these
groupoids of quadrics are anti-equivalent to the corresponding
groupoids of algebras. 
Isomorphism classes of these groupoids are in bijection with
$H^1(\Gamma, \PGO^+_4)$ or 
$H^1(\Gamma, \Sym_2^3 \rtimes \Sym_4)$. The groups  $\PGO^+_8$ and
$\Sym_2^3 \rtimes \Sym_4$   admit up to inner automorphisms one
 outer action of the cyclic group of order $3$ (this is well known for
 $ \PGO^+_8)$ and is in \cite{franzsen_howlett} for $\Sym_2^3 \rtimes \Sym_4$).
Thus the cyclic group of order $3$ acts on 
 the corresponding sets of isomorphism classes.
In fact these automorphisms of order $3$ are induced by automorphisms that
can already be defined at the level of the groupoids.
Let $\D_4$ be any of the four groupoids above, let $\widetilde \D_4$ be the
corresponding groupoid of unoriented objects, and let
$\mathcal{F}\colon (E,\partial_E) \mapsto  E$ be the forgetful functor
$\D_4  \to \widetilde\D_4$. 

\begin{thm} \label{thm:Candtriality}
  The functors $C_1$, $C_2$ factor through the forgetful functor,
  i.e.,
  there are functors 
  \[
  C_1^+,\; C_2^+\colon  \D_4\to
   \D_4
  \]
  such that $\mathcal{F}\circ C_i^+ = C^i$ for
  $i=1$, $2$. These functors satisfy natural equivalences:
  \[
  (C_1^+)^3 = \id,\qquad (C_1^+)^2=C_2^+
  \]
and induce the trialitarian action on the sets of isomorphism classes.
\end{thm}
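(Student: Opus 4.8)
The four groupoids fall into two anti-equivalent pairs, $\OQEtex_F^4\equiv\OQCov_\Gamma^4$ and $\OQCSA_F^8\equiv\OQuad_\Gamma^6$, and the forgetful functors $\mathcal F$ and the Clifford component functors $C^1,C^2$ correspond under these anti-equivalences (by the very definition of the Clifford algebra $C(L/L_0)$ and the compatibility of $\funcY$ with it); note that an anti-equivalence of \emph{groupoids} is the same thing as an equivalence. So it suffices to treat $\OQCov_\Gamma^4$ and $\OQCSA_F^8$. In each of these two cases every object becomes isomorphic to the split one after extension of scalars to $F_s$, so a functor $\D_4\to\D_4$, and a natural transformation between two such, is determined by its restriction to the split object together with equivariance under its automorphism group $G$, where $G=\Sym_2^3\rtimes\Sym_4$ for the thin groupoid and $G=\PGO^+_8$ for $\OQCSA_F^8$. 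I would therefore reduce every construction and every asserted identity to an explicit statement about the split object: on the algebra side these are the triality identities already recorded in \cite[\S35]{KMRT}, and on the thin side they are the combinatorics of the $16$-element Clifford set of $\boldsymbol8/\boldsymbol4$ worked out in \cite{KT09}.

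\textbf{The factorization through $\mathcal F$.}
By \cite{KT09} (resp.\ \cite[\S35]{KMRT}) we already know that $C^i(E,\partial_E)$ is an object of $\QEtex_F^4$ (resp.\ of $\QCSA_F^8$); what the factorization asks for is a \emph{canonical} orientation on it, i.e.\ a natural $\Gamma$-isomorphism $\boldsymbol2\iso\Delta\bigl(C^i(E,\partial_E)\bigr)$. Since $\Delta$ of a thin $D_4$-quadric depends only on its underlying $8$-element set (see the description before Proposition~\ref{prop:isounion} and \cite[Proposition~2.5]{KT09}), I would construct this orientation by first doing it over $F_s$: for the split object $(\boldsymbol8/\boldsymbol4,\partial_0)$ the component $C_i$ consists of $8$ of the transversals of the four fibres of $\boldsymbol8\to\boldsymbol4$, and using that embedding of $C_i$ into the set of $4$-subsets of $\boldsymbol8$ one writes down an explicit $G$-equivariant labelling by $\boldsymbol2$ of the two discriminant classes of $C_i$; being $G$-equivariant it descends to an arbitrary $(E,\partial_E)$. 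On the algebra side the orientation is the one carried by the components of the trialitarian triple of \cite[\S35]{KMRT}. This produces functors $C_1^+,C_2^+\colon\D_4\to\D_4$ with $\mathcal F\circ C_i^+=C^i$; functoriality is inherited from that of all the pieces.

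\textbf{The triality identities.}
By the reduction above, to prove $(C_1^+)^3=\id$ and $(C_1^+)^2=C_2^+$ it suffices to produce $G$-equivariant natural isomorphisms on the split object. Iterating the construction of the previous paragraph, I would compute directly that $(C_1^+)^3(\boldsymbol8/\boldsymbol4,\partial_0)$ is canonically $(\boldsymbol8/\boldsymbol4,\partial_0)$ and that $(C_1^+)^2(\boldsymbol8/\boldsymbol4,\partial_0)$ is canonically $C_2^+(\boldsymbol8/\boldsymbol4,\partial_0)$; this bookkeeping is the combinatorial shadow of the isomorphism $\Sym_4\iso\Sym_2^2\rtimes\Sym_3$ applied three times, equivalently of the outer automorphism $\theta$ of order $3$ of $\Sym_2^3\rtimes\Sym_4$ constructed in \cite{franzsen_howlett}, for which $\theta^3=\id$ and $\theta^2=\theta^{-1}$. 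One then checks that these isomorphisms intertwine the $G$-actions — equivalently that $C_1^+$ induces $\theta$ on automorphism groups — so they are $\Gamma$-equivariant and descend to the required natural equivalences. For $\OQCSA_F^8$ the analogous statements for $\PGO^+_8$ (equivalently $\Spin_8$) are contained in \cite[\S35]{KMRT}.

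\textbf{The induced action on isomorphism classes, and the main obstacle.}
Finally, $\Iso(\D_4)$ is in bijection with $H^1(\Gamma,G)$, and by the previous step $C_1^+$ acts on automorphism groups as the outer order-$3$ automorphism $\theta$, so it induces $H^1(\Gamma,\theta)$ on $\Iso(\D_4)$. Since $\theta$ is unique up to inner automorphisms (as recalled in the text), $H^1(\Gamma,\theta)$ is by definition the trialitarian action, and the relations $(C_1^+)^3=\id$, $(C_1^+)^2=C_2^+$ recover $\theta^3=\id$, $\theta^2=\theta^{-1}$ on isomorphism classes. The hard part will be the explicit verification that the iterated Clifford construction (on $\Gamma$-sets, and on Clifford algebras) realizes $\theta$ up to inner automorphisms, and that the chosen chain of isomorphisms $(C_1^+)^3\cong\id$ is coherent and $\Gamma$-equivariant enough to descend without ambiguity; once the split-object combinatorics is pinned down, everything else is formal.
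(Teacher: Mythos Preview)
Your proposal is correct and follows essentially the same approach as the paper: the paper's proof consists of nothing more than the two citations ``the claim for thin objects is in~\cite{KT09} (under the hypothesis that $\charac F\neq2$) and the classical case (for algebras) is in~\cite[\S35]{KMRT}'', and your sketch unpacks precisely the content of those references together with the formal reduction via the anti-equivalences between the four groupoids. What you lay out---reduction to the split object, $G$-equivariance, and identification of $C_1^+$ with the outer order-$3$ automorphism---is exactly what those sources do, so your write-up is a faithful expansion of the paper's one-line proof.
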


\begin{proof}
The claim for thin objects is in~\cite{KT09} (under the hypothesis
that $\operatorname{char}(F)\neq2$) and the classical case
(for algebras) is in~\cite[\S35]{KMRT}.
\end{proof}

Observe that triality is compatible with general embeddings.  Triality
for thin objects is extensively discussed in~\cite{KT09}. A very nice
presentation
of classical (geometric and algebraic) triality is in
\cite{springer_triality}. 

\bibliographystyle{plain}


\end{document}